\newcommand\SmallMatrix[1]{{%
  \scriptsize\arraycolsep=0.6\arraycolsep\ensuremath{\begin{pmatrix}#1\end{pmatrix}}}}
\DeclareMathOperator{\Spinc}{Spin^c}
\DeclareMathOperator{\spinc}{spin^c}
\DeclareMathOperator{\Char}{Char}
\newcommand{\agraph}{\Gamma}
\newcommand{\agraphp}{\Gamma'}
\newcommand{\amfld}{Y}
\newcommand{\adeg}{\delta_{amb}}
\newcommand{\amatrix}{M}
\newcommand{\amatrixp}{M'}
\newcommand{\mmatrix}{M_{v_0}}
\newcommand{\mgraph}{\Gamma_{v_{0}}}
\newcommand{\mgraphp}{\Gamma_{v_{0}}'}
\newcommand{\mmfld}{Y_{v_{0}}}
\newcommand{\sgraph}{\Gamma_{v_{0},m_{0}}}
\newcommand{\smfld}{Y_{v_{0},m_{0}}}
\newcommand{\sdeg}{\delta}
\newcommand{\smatrix}{M_{v_{0},m_{0}}}
\newcommand{\reldeg}{\h{\delta}}
\newcommand{\V}{\mathcal{V}}
\DeclareMathOperator{\Edges}{\mathcal{E}}
\newcommand{\knot}{\mathcal{K}}
\renewcommand{\root}{R}
\newcommand{\rootbi}{R^{\rm bi}}
\newcommand{\superlevel}{\mathcal{S}}
\newcommand{\superlevelone}{\b{\mathcal{S}}}
\newcommand{\one}{u}
\newcommand{\onev}{\lambda}
\newcommand{\onevp}{\onev'}
\newcommand{\coset}{\mathcal{A}}
\newcommand{\Zhat}{\widehat{Z}}
\newcommand{\Zhathat}{\widehat{\vphantom{\rule{5pt}{10pt}}\smash{\widehat{Z}}\,}\!}
\newcommand{\qexp}{\xi}
\newcommand{\zexp}{\zeta}
\newcommand{\texp}{\theta}
\newcommand\hdual[1]{{#1}^{\intercal}}
\newcommand{\Ring}{\mathcal{R}}
\renewcommand{\sf}{\operatorname{sf}}
\newcommand{\relmap}{\omega} 
\newcommand{\linkn}{\ell k}
\newcommand{\shift}{\sigma}
\newcommand{\eps}{\varepsilon}
\DeclareMathOperator{\F}{\mathbb{F}}
\DeclareMathOperator{\Z}{\mathbb{Z}}
\DeclareMathOperator{\Q}{\mathbb{Q}}
\DeclareMathOperator{\Hom}{Hom}
\newcommand{\til}[1]{\widetilde{#1}}
\newcommand{\h}[1]{\widehat{#1}}
\renewcommand{\b}[1]{\overline{#1}}
\newcommand{\BI}{\Ring[[z,z^{-1}]]}
 \newcommand{\htarget}[1]{\Hy@raisedlink{\hypertarget{#1}{\label{#1}}}}
\newtheorem{thm}{Theorem}[section]
\newtheorem*{thm*}{Theorem}
\newtheorem{prop}[thm]{Proposition}
\newtheorem{lem}[thm]{Lemma}
\theoremstyle{definition}
\newtheorem{defn}[thm]{Definition}
\newtheorem{exmp}[thm]{Example}
\theoremstyle{remark}
\newtheorem{rem}[thm]{Remark}
 \newcommand{\linkdest}[1]{\Hy@raisedlink{\hypertarget{#1}{}}}
\title{Knot lattice homology and $q$-series invariants for plumbed knot complements}
\author[R. Akhmechet]{Rostislav Akhmechet}
\address{Department of Mathematics, Columbia University, New York, NY 10027}
\email{\href{mailto:akhmechet@math.columbia.edu}{akhmechet@math.columbia.edu}}
\author[P. Johnson]{Peter K. Johnson}
\address{Department of Mathematics, Michigan State University, East Lansing, MI 48824}
\email{\href{mailto:john8251@msu.edu}{john8251@msu.edu}}
\author[S. Park]{Sunghyuk Park}
\address{Department of  Mathematics, Harvard University, Cambridge, MA 02138}
\email{\href{mailto:sunghyukpark@math.harvard.edu}{sunghyukpark@math.harvard.edu}}
\subjclass[2020]{Primary 57K31; Secondary 57K18, 57K16.}
\begin{document}

\maketitle

\begin{abstract}
We introduce an invariant of negative definite plumbed knot complements unifying knot lattice homology, due to Ozsv{\'a}th, Stipsicz, and Szab{\'o}, and the BPS $q$-series of Gukov and Manolescu. 
This invariant is a natural extension of weighted graded roots of negative definite plumbed 3-manifolds introduced earlier by the first two authors and Krushkal. 
We prove a surgery formula relating our invariant with the weighted graded root of the surgered 3-manifold. 
\end{abstract}

\tableofcontents 

\section{Introduction}
In this paper, we introduce an invariant of negative definite plumbed knot complements that unifies the homological degree zero part of knot lattice homology \cite{OSS} and the BPS $q$-series \cite{GM}. 

\emph{Lattice homology} $\mathbb{H}_*$, defined by N\'{e}methi \cite{Nem_On_the, Nem_Lattice_cohomology}, is an invariant of negative definite plumbed 3-manifolds which has important applications to both low-dimensional topology and singularity theory.
It expands upon earlier work of Ozsv\'{a}th and Szab\'{o} \cite{OS-on_the_Floer} in which they study the Heegaard Floer homology of a certain class of negative definite plumbed 3-manifolds. 

Given a closed oriented 3-manifold $Y$, described as a negative definite plumbing, $\mathbb{H}_*(Y)$ is a module over the polynomial ring $\Z[U]$. 
It decomposes as a direct sum over $\spinc$ structures of $Y$, $\mathbb{H}_{*}(Y)=\bigoplus_{\mathfrak{s}\in\spinc(Y)}\mathbb{H}_{*}(Y,\mathfrak{s})$.
Moreover, for each $\mathfrak{s}\in \spinc(Y)$, $\mathbb{H}_{*}(Y, \mathfrak{s})$ carries two gradings: the \emph{homological} grading and the \emph{Maslov} grading.
The homological grading is given by the index $*\in \Z_{\geq 0}$. 
For each fixed homological grading $i\in \Z_{\geq 0}$, $\mathbb{H}_{i}(Y,\mathfrak{s})$ is itself a Maslov graded $\Z[U]$-module, where $U$ is in Maslov degree $-2$.

For each $\mathfrak{s}\in \spinc(Y)$, the homological degree zero part of lattice homology $\mathbb{H}_{0}(Y,\mathfrak{s})$ can be conveniently encoded by an infinite graph called the \emph{graded root}. 
Note $\mathbb{H}_{0}$ was already present in \cite{OS-on_the_Floer}, before the general formulation of lattice homology. 
For a subclass of negative definite plumbings called \emph{almost rational plumbings}, lattice homology is concentrated in homological degree zero \cite{Nem_Lattice_cohomology}. 
Using a completed version of lattice homology obtained by working over the ground ring $\F[[U]]$ where $\F = \Z/2\Z$, 
Zemke \cite{Zem} established the equivalence of lattice homology (using all homological gradings) and Heegaard Floer homology $HF^-$ defined by Ozsv{\'a}th-Szab{\'o} \cite{OS-HF} for plumbing trees (not necessarily negative definite), extending earlier proofs of this equivalence in special cases \cite{OS-on_the_Floer,Nem_On_the, OSS, OSS_spectral_sequence}.

The \emph{BPS $q$-series} $\Zhat$ \cite{GPPV}, also known as the \emph{homological block} or the \emph{Gukov-Pei-Putrov-Vafa (GPPV) invariant}, is another invariant of negative definite plumbed 3-manifolds. 
Like lattice homology, BPS $q$-series are indexed by the set of $\spinc$ structures of the $3$-manifold.
As the name suggests, this invariant takes the form of a power series in $q$ with integer coefficients (up to a simple overall factor). 
These $q$-series encode the Witten-Reshetikhin-Turaev (WRT) invariants  \cite{Witten,RT} in the sense that WRT invariants can be recovered in the radial limit to roots of unity of certain linear combinations of these $q$-series over spin$^c$ structures \cite{GPPV, Mur}. 
For some classes of negative definite plumbed $3$-manifolds, the BPS $q$-series are known to satisfy (quantum) modularity  \cite{Lawrence-Zagier, Zagier, 3d_modularity, BMM}. 

While lattice homology and BPS $q$-series have very different origins, they are both defined in terms of the lattice of characteristic vectors of the 4-manifold coming from the plumbing description bounded by the plumbed 3-manifold. 
Based on this observation, the first two authors and Krushkal \cite{AJK} assigned to each node in the graded root a Laurent polynomial weight in two variables $q$ and $t$, resulting in the \emph{weighted graded root}, which unifies the graded root and the BPS $q$-series. 
These weights depend on a choice of \emph{admissible family of functions} (Definition \ref{def:admissible family}). 
In an appropriate sense (see \cite[Section 6]{AJK}), the weights stabilize to a two-variable power series in $q$ whose coefficients are Laurent polynomials in $t$. 
For a specific choice of admissible family $\h{W}$, evaluating the resulting power series at $t=1$ yields exactly the BPS $q$-series. 
Recent work of Liles and McSpirit \cite{Liles-McSpirit} studied these two variable refinements of $\Zhat$ and established quantum modularity for other specializations of $t$. 

Both lattice homology and BPS $q$-series have natural extensions to plumbed knot complements, namely \emph{knot lattice homology}, introduced by Ozsv{\'a}th-Stipsicz-Szab{\'o} \cite{OSS}, and \emph{BPS $q$-series for knot complements}, introduced by Gukov-Manolescu \cite{GM}, respectively. 
It has been shown that knot lattice homology is isomorphic to knot Floer homology for certain classes of knots \cite{OSS_L_spaces}. 
Niemi-Colvin \cite{Niemi-Colvin} reformulated knot lattice homology as the singular homology of a double filtration of a Euclidean space and proved that the homotopy type of this double filtration is an invariant of the plumbed knot complement. 
It is implicit in \cite{Niemi-Colvin} that the homological degree zero part of knot lattice homology can be naturally encoded by a certain infinite graph that we refer to as the \emph{bigraded root}. 
The two gradings of the bigraded root reflect the graded $\F[U,V]$-module structure of the knot Floer homology. 

A natural question that arises from the construction of \cite{AJK} is whether the weighted graded root can be extended to plumbed knot complements. 
Our first main result gives a positive answer to this question. 
As an executive summary, a negative definite marked plumbing graph describes a closed plumbed $3$-manifold $Y$ and a knot $\knot \subset Y$. 
The knot complement $Y\setminus \knot$ is equipped with a specified curve $\mu_{\knot}$ on its boundary given by the meridian of $\knot$. 
We refer to the pair $(Y\setminus \knot, \mu_{\knot})$ as a \emph{negative definite plumbed knot complement}, and we will often omit the boundary curve $\mu_{\knot}$ from the notation. 
If two negative definite marked plumbing graphs represent  plumbed knot complements for which there is an orientation-preserving diffeomorphism sending one boundary curve to the other, then the graphs are related by a finite sequence of \emph{Neumann moves} (Figures \ref{fig:Neumann moves closed} and \ref{fig:Neumann moves}). 

For each $\spinc$ structure on $Y$, we assign three-variable weights to each node of the bigraded root of $Y\setminus \knot$, resulting in the \emph{weighted bigraded root} for the plumbed knot complement; see Definition \ref{def:weighted bigraded root}. 
The weights are constant along the $V$-grading direction; see Figure \ref{fig:trefoil weighted bigraded root} for an example. 
Moreover, as we lower the $U$-grading, the weights stabilize to the BPS $q$-series for the plumbed knot complement. 
We note that knot lattice homology is indexed by the set of $\spinc$ structures of the $3$-manifold $Y$ containing $\knot$, while the BPS $q$-series for the knot complement depends on a choice of a \emph{relative} $\spinc$ structure on $Y\setminus \knot$. 
In Section \ref{sec:BPS q series for plumbed knot complements} we renormalize the $q$-series so that it depends only on the $\spinc$ structure on $Y$. 
Consequently, our weighted bigraded roots are indexed by $\spinc(Y)$.
We prove the following. 

\begin{figure}
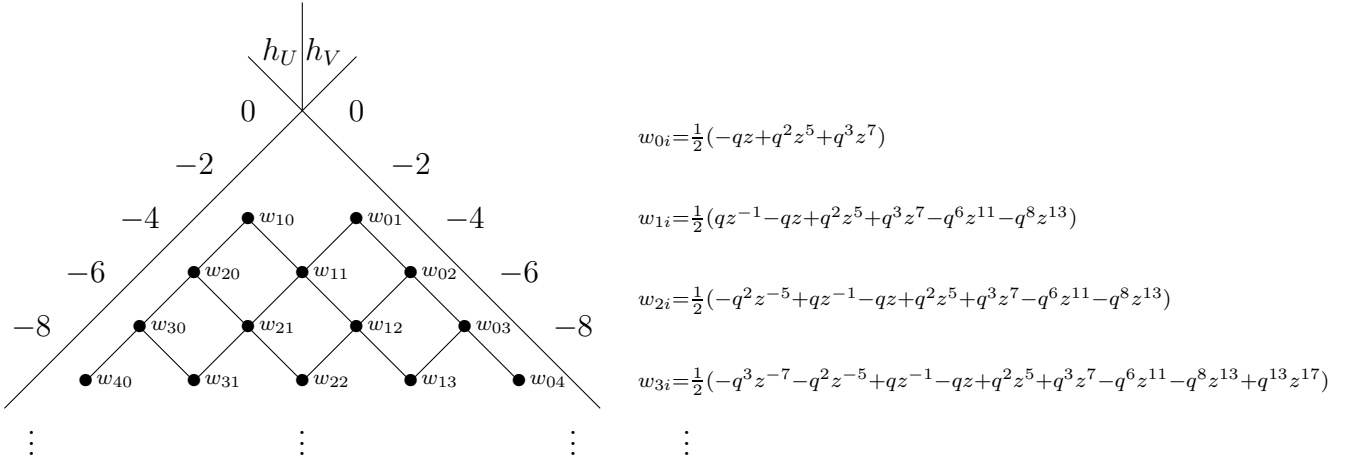

    \centering
    \includestandalone[scale=1.0]{figures/trefoil_weighted_bigraded_root}
    \caption{The weighted bigraded root of the trefoil at $t=1$ corresponding to the admissible family $\h{W}$ and $\eps=1$.}
    \label{fig:trefoil weighted bigraded root}
\end{figure}

\begin{thm}[Invariance under Neumann moves; detailed version in Theorem \ref{thm:invariance}]
For each $\spinc$ structure, the weighted bigraded root for negative definite plumbed knot complements is invariant under Neumann moves. 
\end{thm}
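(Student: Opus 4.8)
The plan is to reduce to a finite check---one Neumann move at a time---and, for each move $\agraph \rightsquigarrow \agraphp$, to exhibit an explicit identification of the lattices of characteristic vectors underlying the two bigraded roots, and then to verify that this identification respects both gradings, the indexing by $\spinc(Y)$, and the system of node weights. By the cited fact that any two negative definite marked plumbing graphs representing the same plumbed knot complement (with matching meridians) are related by finitely many Neumann moves, it suffices to treat a single move. There are two regimes: moves supported away from the marked vertex $v_0$---essentially the closed Neumann moves of Figure \ref{fig:Neumann moves closed} performed in the ambient graph---and the moves of Figure \ref{fig:Neumann moves} that alter $v_0$ and the meridian.

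First I would set up the lattice comparison. A blow-up enlarges the plumbing $4$-manifold by a connected sum with $\b{\CP}{}^2$, so the new lattice $L'$ splits off the class $E$ of the new exceptional sphere; characteristic vectors of $L'$ restricting to a fixed $\spinc$ class and having $E$-pairing $\pm 1$ are in canonical bijection with those of $L$, while the values $\pm 3, \pm 5, \dots$ only contribute a geometric-series factor that is discarded upon passing to $\Hbb_0$. Under this bijection the $U$-grading (read off from the square of the characteristic vector) and the $V$-grading (the knot grading, recorded by the pairing with the class dual to the meridian) are preserved, so the underlying bigraded roots are isomorphic. For the non-marked moves this is precisely the knot lattice homology invariance already implicit in \cite{Niemi-Colvin, OSS}; for the moves touching $v_0$ one must additionally track the renormalization of Section \ref{sec:BPS q series for plumbed knot complements} carrying relative $\spinc$ structures on $Y \setminus \knot$ to $\spinc(Y)$, and check that it intertwines the two sides.

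The main point is then to match the weights. By construction the node weights are built from an admissible family of functions (Definition \ref{def:admissible family}) evaluated on the characteristic vectors in the relevant sublevel sets, packaged into a Laurent-polynomial generating function in the three formal variables (with $z$ the knot variable). The axioms defining an admissible family are exactly the compatibility conditions for adjoining a $(-1)$- or $(-2)$-vertex, and invoking them one computes how the generating function attached to each node transforms under the blow-up: the extra exceptional direction $E$ contributes only a trivial geometric-series factor that cancels against the normalizing monomial, leaving each weight unchanged. For the non-marked moves this calculation reduces to the one in \cite{AJK}; independence of the weights along the $V$-direction persists because $E$ pairs trivially with the meridian dual.

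The step I expect to be the genuine obstacle is the analysis of the moves that modify the marked vertex, since there the knot grading, the meridian curve, and the $q$-series renormalization all interact, and none of this is governed by the closed-manifold argument of \cite{AJK}. Bookkeeping the relative $\spinc$ structures through a blow-up at $v_0$, and confirming that the $V$-grading shift and the normalizing factor conspire to leave both the bigraded root and its weights invariant, is where the real work lies.
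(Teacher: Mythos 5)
Your overall strategy---reduce to a single Neumann move, exhibit an explicit identification of the lattices of characteristic vectors, and use the admissible-family axioms to match weights node by node---is exactly the route the paper takes (the identifications are the maps $\beta_\pm$ of \eqref{eq:A0 closed lattice}--\eqref{eq:C lattice}, and the underlying bigraded root is handled by citing Theorem \ref{thm:bigraded root invariance}). However, as written the proposal stops short of a proof in two respects. First, the mechanism you give for why only $E$-pairing $\pm 1$ matters is not correct: there is no ``geometric-series factor discarded upon passing to $\mathbb{H}_0$.'' What actually happens is that the new vertex has degree $2$ (moves \hyperlink{A}{(A)}, \hyperlink{A0}{(A0)}) or degree $1$ (moves \hyperlink{B}{(B)}, \hyperlink{B0}{(B0)}) in the \emph{marked} graph, so the corresponding factor $W_2$ or $W_1$ in the weight \eqref{eq:weight of component knot complement} has finite support and annihilates every characteristic vector whose new coordinate is not $\eps$ (resp.\ $\pm 1$); the surviving contributions are then matched one-to-one, or two-to-one using \ref{item:AD2}. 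Separately, the isomorphism of the underlying (bi)graded roots requires showing that every lattice point of a superlevel set is connected by edges \emph{inside that superlevel set} to a point in the image of $\beta_\pm$---a connectivity argument (Lemmas \ref{lem:beta1 and beta-1 induce isos on graded roots} and \ref{lem:ambient graded root is invariant}), not a cancellation of series.

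Second, and more importantly, you defer the \hyperlink{A0}{(A0)} and \hyperlink{B0}{(B0)} cases as ``where the real work lies'' without carrying them out, and these are precisely the new content of the theorem: the closed moves largely reduce to \cite{AJK} and the unweighted bigraded root to \cite{Niemi-Colvin}. In the \hyperlink{B0}{(B0)} case the degree of $v_0$ increases, so the prefactor $(t^{-1/2}z-t^{1/2}z^{-1})^{1-\delta_0}$ changes between the two sides; the identity that makes the weights agree is that the two images $\beta_\pm(K)=(K,\pm 1)$ contribute with signs $W_1(\pm 1)=\mp 1$ and with $z$- and $t$-exponents shifted by $\mp 1$ and $\pm\tfrac{1}{2}$ respectively, so that their sum reproduces exactly the extra factor $(t^{-1/2}z-t^{1/2}z^{-1})$ multiplying the original term, while the $q$-exponents coincide (using $\onev'=(\onev,1)$ and the change in $\sum m_v$). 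Without this computation---equation \eqref{eq:B1 equality} in the paper---together with the verification that all other lattice points contribute zero, the argument is incomplete.
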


As in \cite{AJK}, the weighted graded root depends on a choice of admissible family of functions. 
In the present paper the weights also depend on a choice of $\eps \in \{\pm 1\}$. 
These two choices, discussed in Section \ref{sec:Zhat closed}, correspond to two natural ways to identify the lattices used to define BPS $q$-series and (knot) lattice homology. 
Moreover, the two choices of $\eps$ clarify the behavior of the weighted graded root under $\spinc$ conjugation.   

Surgery formulas for knot lattice homology and for BPS $q$-series for knot complements were established in \cite{OSS}  and \cite{GM}, respectively. 
Our next main result unifies the two surgery formulas by relating the weighted (bi)graded root of the plumbed knot complement $Y\setminus \knot$ with that of the plumbed $3$-manifold obtained from  surgery on $\knot$.
\begin{thm}[Surgery formula; detailed version in Theorem \ref{thm:p surgery}] 
Let $Y\setminus \knot$ be a plumbed knot complement obtained from a negative definite marked plumbing graph. 
Let $Y'$ be a closed plumbed $3$-manifold built from a negative definite plumbing graph obtained by attaching an integer framing to the marked vertex. 
Then the weighted bigraded roots of $Y\setminus \knot$ determine the  weighted graded roots of $Y'$.
\end{thm}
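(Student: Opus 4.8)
The plan is to reduce the surgery formula to a comparison of the two lattice-theoretic models that underlie the weighted bigraded root of $Y \setminus \knot$ and the weighted graded root of $Y'$. First I would set up the combinatorics: the negative definite marked plumbing graph $\agraph$ for $Y \setminus \knot$ has a distinguished marked vertex $v_0$ (with no framing), and attaching an integer framing $p$ to $v_0$ produces a genuine negative definite plumbing graph $\agraphp$ for $Y' = Y_p(\knot)$ (after possibly a blow-up to ensure negative definiteness, which is harmless by the invariance theorem). The lattice $L'$ of $\agraphp$ sits in a short exact sequence with the lattice $L$ of $\agraph$ and the rank-one contribution of the framed vertex; concretely, characteristic vectors for $\afourmfldp$ restrict to characteristic vectors for $\afourmfld$ together with an integer recording the pairing with the new generator $E_{v_0}$. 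The double filtration on the Euclidean space computing knot lattice homology (in the Niemi-Colvin reformulation) is indexed by two weight functions coming from $v_0$, and the point is that the $p$-framed lattice filtration is obtained by slicing these two filtrations along the affine subspace where the $E_{v_0}$-coordinate is fixed, then re-indexing by the combination $U + p \cdot(\text{something}) $ dictated by the surgery framing.

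Next I would carry out the comparison of weights. On the $Y'$ side, each node of the graded root carries the two-variable weight built from an admissible family $\h W$ (or general admissible family) applied to the squares of characteristic vectors of $\afourmfldp$; on the $Y \setminus \knot$ side, each node of the bigraded root carries a three-variable weight, constant in the $V$-direction, built from characteristic vectors of $\afourmfld$ together with the extra $z$-variable tracking the $E_{v_0}$-pairing. The key algebraic identity is that $(K')^2$ for a characteristic vector $K'$ of $\afourmfldp$ decomposes as $K^2$ (for its restriction $K$ to $\afourmfld$) plus a quadratic-in-$p$ correction determined by the $E_{v_0}$-coefficient; translating this through the admissible family's defining functional equation (Definition \ref{def:admissible family}) shows that summing the three-variable weights of $Y \setminus \knot$ over the $z$-variable, with the substitution dictated by $p$-surgery, recovers precisely the two-variable weights of $Y'$. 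Here the parameter $\eps \in \{\pm1\}$ enters to pin down which of the two lattice identifications is being used, matching conventions on both sides. Summing over the fibers of the slicing map then matches the $\Z[U]$-module structure, i.e.\ identifies the graded root of $Y'$ as the quotient/merging of the bigraded root of $Y \setminus \knot$ under the surgery re-indexing.

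Finally I would assemble these into the statement: given the weighted bigraded roots $\{(\rootbi(Y\setminus\knot,\structure), \text{weights})\}_{\structure \in \spinc(Y)}$, I would describe the explicit recipe — take the slice of the double-filtered space at each value of the $E_{v_0}$-coordinate, re-index the filtration level by the $p$-dependent affine function, merge nodes accordingly to form the graded root of $Y'$, and transport the weights via the $z$-summation with the $(K')^2$ substitution — and check that the output is independent of all choices (plumbing presentation of $\agraphp$, blow-ups) by appealing to the Neumann-move invariance already established for both the closed and relative settings. The $\spinc$ bookkeeping needs care: relative $\spinc$ structures on $Y \setminus \knot$ map to $\spinc$ structures on $Y'$ via the surgery cobordism, and I would verify this map is compatible with the renormalization of the $q$-series described in Section \ref{sec:BPS q series for plumbed knot complements} so that the indexing by $\spinc(Y)$ on the knot-complement side passes correctly to $\spinc(Y')$.

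I expect the main obstacle to be the re-indexing step — getting the affine function of $U$ (and the grading shifts) exactly right so that the merged graded root has the correct absolute Maslov grading. This is where the surgery formula is genuinely delicate: the naive slicing gives a graded root up to an overall shift, and pinning down that shift requires carefully tracking the grading conventions through the blow-up correction terms, the $d$-invariant-type normalization, and the choice of $\eps$. The analogous bookkeeping in \cite{OSS} and \cite{GM} suggests the right normalization, but reconciling both simultaneously within the weighted framework is the crux.
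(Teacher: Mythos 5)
Your plan follows essentially the same route as the paper: slice the superlevel sets of the surgered lattice by the Alexander grading, identify each slice with a shifted superlevel set of the ambient graph via the identity $L^2 = K^2 + \tfrac{1}{\Sigma^2}(\Sigma^2-2a)^2$ (the paper's Lemma \ref{lem:L^2 and K^2}), use the bigraded root to detect which nodes get merged by edges in the $v_0$-direction (Proposition \ref{prop:bigraded root algorithm}), and recover the weights by extracting $z$-coefficients with a $p$-dependent kernel (the Laplace transform $\mathcal{L}_p^{(a,\eps)}$ of Lemma \ref{lem:surgery lattice point} and Theorem \ref{thm:p surgery}). The grading shift you flag as the delicate point is exactly the paper's $\shift(a)$, so the proposal is a correct sketch of the paper's argument.
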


\subsection*{Organization of this paper}
In Section \ref{sec:plumbed_mflds}, we review the basics of plumbing graphs and plumbed $3$-manifolds. 

In Section \ref{sec:AJK}, we review graded roots, the BPS $q$-series $\Zhat$, and weighted graded roots for closed plumbed 3-manifolds introduced in \cite{AJK}. 

In Section \ref{sec:weighted_bigraded_roots}, we review knot lattice homology, following the approach of Niemi-Colvin \cite{Niemi-Colvin}, and BPS $q$-series for plumbed knot complements. We then give our main construction, the weighted bigraded root for plumbed knot complements, and prove its invariance under Neumann moves. 

In Section \ref{sec:surgery_formula}, we prove the surgery formula for the weighted graded roots. 
We also illustrate it with an explicit example. 

In Appendix \ref{sec:remarks on invariants of plumbed mflds}, we discuss some subtleties regarding invariants of plumbed manifolds equipped with a $\spinc$ structure.

\subsection*{Summary of notation}\label{sec:notation}
We summarize some notations that will be used in this paper. 
\bgroup
\def\arraystretch{1.7}
\begin{longtable}[H]{l|p{13cm}|l}
    $K^2$ & The square of $K\in H^2(X(\agraph);\Z)$, given by $K^2 = \hdual{K} M^{-1} K$ & Eq. \eqref{eq:square} \\
    $\one$ &  $\one = (1,\ldots, 1)$. The length of this vector is determined by context & Eq \eqref{eq:\one} \\
    $\onev$ & $\onev= (\lambda_{1}, \ldots, \lambda_{s})$ where $\lambda_{i}= 1$ if $v_{i}$ is adjacent to $v_{0}$ in $\mgraph$  and $0$ otherwise & Eq. \eqref{eq:onev vector} \\
    $\reldeg$ & $\reldeg = \sdeg + e_0$ & Def. \ref{def:relative spinc}  \\ 
    $\Sigma$ & $\Sigma = \dfrac{\smatrix^{-1}e_0}{\hdual{e_0}\smatrix^{-1}e_0} = (1, - M^{-1}\onev) \in H_2(X(\Gamma_{v_0,m_0}); \mathbb{Q}) \cong \Q^{s+1}$  & Eq. \eqref{eq:Sigma}\\
    $\Sigma^2$ & $\Sigma^2 = \hdual{\Sigma} M_{v_0, m_0}\Sigma = \dfrac{1}{\hdual{e_0}\smatrix^{-1}e_{0}} = m_0 - \hdual{\onev}\amatrix^{-1} \onev \in \Q$  & Eq. \eqref{eq:Sigma^2} \\
    $\sf$ & The (rational) Seifert framing of $\knot$, given by $\hdual{\onev} M^{-1} \onev = m_0 -\Sigma^2$, which is an integer if $\knot$ is null-homologous.  &  Pg. \hyperlink{sf}{ \pageref{sf}}
\end{longtable}
\egroup

\subsection*{Acknowledgements}
We thank Antonio Alfieri, Sergei Gukov, Matthew Hedden, Slava Krushkal, Andr\'{a}s N\'{e}methi, Seppo Niemi-Colvin, and Matthew Stoffregen for interesting discussions. 

P.J and R.A. were partially supported by NSF RTG Grant DMS-1839968 while working on this project. 
S.P. gratefully acknowledges support from Simons Foundation through Simons Collaboration on Global Categorical Symmetries.

\section{Plumbed  manifolds}\label{sec:plumbed_mflds}

\subsection{Closed plumbed $3$-manifolds}
\label{subsec:closed plumbed manifolds}
In this subsection, we review closed plumbed 3-manifolds, their $\spinc$ structures, and Neumann moves. 

Given a graph $\Gamma$, we denote its set of vertices by $\V(\Gamma)$. 
We say $\Gamma$ is \emph{integer weighted} if it is equipped with a function $m: \V(\Gamma)\to \Z$. 
In this paper, a \textit{plumbing graph} will mean an integer weighted forest $\Gamma$ with finitely many vertices.

To a plumbing graph $\Gamma$, one can associate a 4-manifold $X = X(\Gamma)$ and a 3-manifold $Y = Y(\Gamma)$ as follows. 
First, form a framed link $\mathcal{L}=\mathcal{L}(\agraph)\subset S^3 = \partial D^4$ by associating to each $v\in \V(\Gamma)$ a standard unknot $L_{v}$ with framing $m(v)$ and Hopf linking $L_{v}$ and $L_{w}$ if and only if $v$ is adjacent to $w$. 
See Figure \ref{fig:plumbing and link ex} for an example. 
Define $X$ to be the result of attaching 2-handles to the 4-ball $D^4$ along $\mathcal{L}$ and define $Y$ to be the result of Dehn surgery along $\mathcal{L}$. 
Note, $Y = \partial X$. 

\begin{figure}[H]
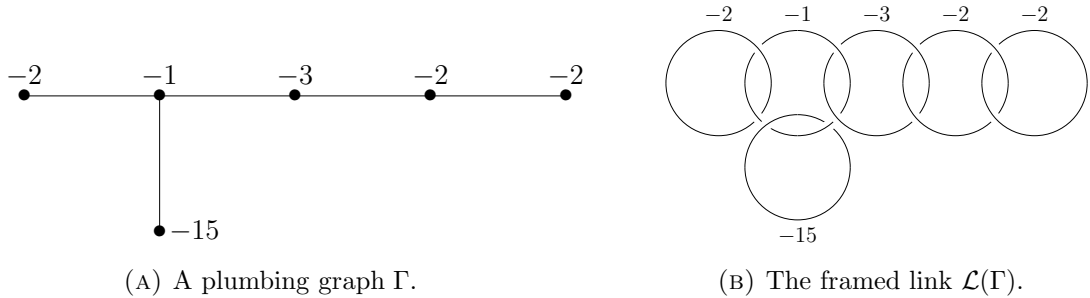

\centering
\subcaptionbox{A plumbing graph $\agraph$.
\label{fig:plumbing ex}}[.45\linewidth]
{\includestandalone[scale=.9]{figures/plumbing_ex}
}
\subcaptionbox{The framed link $\mathcal{L}(\agraph)$.
\label{fig:link ex}
}[.5\linewidth]
{\includestandalone[scale=.7]{figures/link_ex}
}
\caption{A plumbing $\agraph$ and its associated framed link $\mathcal{L}(\agraph)$. The $3$-manifold $\amfld(\agraph)$ is the Brieskorn sphere $\Sigma(2,7,15)$.}\label{fig:plumbing and link ex}
\end{figure}

\begin{defn}\label{def: plumbed mfld}
    An oriented 3-manifold is \textit{plumbed} if it is diffeomorphic\footnote{``Diffeomorphism'' always means orientation-preserving diffeomorphism in this paper.} to $Y(\Gamma)$ for some plumbing graph $\Gamma$. 
    Moreover, if $\Gamma$ can be chosen such that the associated 4-manifold $X(\Gamma)$ has negative definite intersection form, we call the plumbed 3-manifold \emph{negative definite}.  
\end{defn}

\begin{rem}
    While the framed link $\mathcal{L}(\Gamma)$ is not uniquely determined by $\Gamma$, any pair of framed links built from $\Gamma$ in the manner described above will be isotopic. 
    Moreover, any such isotopy between framed links will determine a diffeomorphism between the corresponding manifolds. 
\end{rem}

We now identify various topological and algebraic quantities in terms of the data encoded by the plumbing graph $\agraph$. 
First, fix an orientation on $\mathcal{L}$ such that if $v$ is adjacent to $w$, then $\linkn(L_{v}, L_{w}) = +1$. 
For each $v\in \V(\Gamma)$, let $S^2_{v}\subset X$ denote the $2$-sphere obtained by capping off the core of the 2-handle attached to $L_{v}$ with a disk. 
We choose orientations on these spheres so that they agree with the orientation of $\mathcal{L}$ in the sense that if $v,w\in \V(\agraph)$ are adjacent, then the algebraic intersection number of $S^2_{v}$ and $S^2_{w}$ is equal to $\linkn(L_{v}, L_{w}) = +1$. 
By abuse of notation, we let $v\in H_{2}(X;\Z)$ denote the homology class of the oriented sphere $S^2_{v}$. 
Correspondingly, let $v^*\in H^2(X;\Z)$ denote the image of the Poincar\'{e} dual of $v$ under the map $H^2(X, \partial X;\Z)\to H^2(X;\Z)$ 
and let $v^{\dagger}\in \Hom(H_{2}(X;\Z),\Z)\cong H^2(X;\Z)$ denote the hom-dual of $v$, i.e., $v^{\dagger}(w)=\delta_{v,w}$. 
Then, $H_2(X;\Z)$ and $H^2(X;\Z)$ are free abelian groups with bases $\{v\}_{v\in \V(\agraph)}$ and  $\{v^{\dagger}\}_{v\in \V(\agraph)}$, respectively. 

Choose an ordering\footnote{This ordering is chosen for convenience, but is not essential. One could describe all of the constructions in this paper without this choice.} $v_{1}, \ldots, v_{s}$ of $\V(\Gamma)$. 
This ordering yields the following identifications  
\begin{align}
    H_{2}(X;\Z) &\cong \Z v_{1}\oplus \cdots \oplus \Z v_{s}\cong \Z^s, \label{eq:H_2} \\
    H^{2}(X;\Z) &\cong \Z{v}_{1}^{\dagger}\oplus\cdots\oplus \Z{v}_{s}^{\dagger}\cong \Z^s. \label{eq:H^2} 
\end{align}

We will often use the above identifications and work with vectors in $\Z^s$. We let $e_{i}$ denote the $i$-th standard basis vector in $\Z^s$ and for $x\in \Z^s$, we write $\hdual{x}$ for its transpose. 

For $v\in \V(\Gamma)$, we let $\delta(v)$ denote its degree. 
Given the chosen ordering $v_{1}, \ldots, v_{s}$ of $\V(\Gamma)$, we write $m = (m_1, \ldots, m_s),\;\delta = (\delta_1, \ldots, \delta_s) \in \Z^s$, with $m_i = m(v_i)$ and $\delta_i = \delta(v_i)$. 
They are called the \emph{weight vector} and the \emph{degree vector}, respectively. 

Under the identification in equation \eqref{eq:H_2}, the intersection form $\langle\cdot, \cdot\rangle: H_{2}(X;\Z)\times H_{2}(X;\Z)\to \Z$ is given by the adjacency matrix $M$ of $\Gamma$,
 \begin{align}
        M_{ij} = 
        \begin{cases}
        m_{i} &\text{if } i=j,\\
        1 &\text{if }i\neq j, v_{i}\text{ and }v_{j}\text{ share an edge,}\\
        0 &\text{otherwise}.
        \end{cases}
    \end{align}
Parallel to Definition \ref{def: plumbed mfld}, we say the graph $\Gamma$ is \emph{negative definite} if $M$ is negative definite. 

Given $K\in H^2(X;\Z)$, under the identification in equation \eqref{eq:H^2} we define
\begin{align}
    K^2 = \hdual{K}M^{-1}K.\label{eq:square}
\end{align}

The $\spinc$ structures of the 4-manifold $X(\Gamma)$ and the closed oriented 3-manifold $Y(\Gamma)$ can be conveniently described from the data of $\Gamma$. 
The first Chern class $c_1$ provides a bijection $c_{1}:\spinc(X)\to \Char(X)$, where $\Char(X)$ is the set of \textit{characteristic elements}. 
Specifically, 
\[
\Char(X) = \{ K \in H^2(X;\Z) \mid K(x) + \langle x, x\rangle \equiv 0 \bmod 2 \text{ for all } x\in H_2(X;\Z)\}.
\]
Therefore, we can think of $\spinc$ structures on $X$ as elements of the set $\Char(X)$. 
Under the identification in equation \eqref{eq:H^2}, $\Char(X)$ is identified with the set $m + 2\Z^s$. 
We write 
\begin{equation}
    \label{eq:char(Gamma)}
    \Char(\Gamma) = m +2\Z^s
\end{equation}
when working in coordinates. 

By considering the restriction of $\spinc$ structures on $X$ to the boundary $Y$, one can obtain a similar coordinate description of $\spinc(Y)$, which is standard in the lattice homology literature. 
Namely, there is a bijection
\begin{align}
    \spinc(Y)\xrightarrow{\sim} \frac{m+2\Z^s}{2M\Z^s} 
\end{align}
where the quotient on the right is via the action by $2M\Z^s$ on $m+2\Z^s$ given by $(2Mx, k)\mapsto k+2Mx$. We write
\begin{equation}
    \label{eq:spinc closed}
    \spinc(\Gamma)= \frac{m+2\Z^s}{2M\Z^s}
\end{equation}
when working in coordinates. 
For a characteristic vector $k\in \Char(\Gamma)$, let $[k]\in \spinc(\Gamma)$ denote its image in the quotient. 
Often we will think of $[k]$ as the lattice $\{k +2Mx\mid x\in \Z^s\}$, which is a sublattice of $\Char(\Gamma)$.

$\Spinc$ structures in general have a natural $\Z/2\Z$ \textit{conjugation} action. 
For $\spinc$ structures on $X$ thought of as elements of $\Char(\Gamma)$, the conjugation action sends $k\in m + 2\Z^s$ to $-k$. 
For $\spinc$ structures on $Y$ thought of as elements of $\spinc(\Gamma)$, the conjugation action sends $[k]\to [-k]$.

There is also an action of $H_{1}(Y;\Z)$ (or equivalently $H^2(Y;\Z)$ via Poincar\'{e} duality) on $\spinc(Y)$. 
We describe this action in coordinates. 
First, note that there is an isomorphism 
\[
H_1(Y;\Z) \cong \Z^s/M \Z^s 
\]
sending an oriented meridian linking $L_{v_i}$ positively to the coset of the standard basis vector $e_i$. 
Like for $\spinc(\Gamma)$, for $x\in \Z^s$, we write $[x]$ to denote its image in the quotient $\Z^s/M \Z^s$. 
Given $x\in \Z^s$ and $k\in \Char(\Gamma)$, the homology action is given by
\begin{equation}
     [x]\cdot [k] = [k+2x]\in \spinc(\Gamma)
\end{equation}
Note, the $H_1$ action is free and transitive. 

There is another realization of $\spinc(\Gamma)$ common in the literature, namely 
\begin{equation}
\label{eq:spinc with delta}
\spinc(\Gamma) = \dfrac{\delta+2\Z^s}{2M \Z^s}.
\end{equation}
The conjugation and homology actions using this definition are defined analogously: 
\begin{equation}
\begin{aligned}
\label{eq:conj and H_1 for delta version}
    [a] &\mapsto [-a], \\
    [x] \cdot[a] &= [a + 2x],
\end{aligned}
\end{equation}
for $a\in \delta + 2\Z^s$ and $x\in \Z^s$. 
To relate these two definitions of $\spinc(\Gamma)$, first let
\begin{equation}
\label{eq:\one}
    \one = (1,\ldots, 1).
\end{equation}
Note that $M\one= m + \delta$. 
Then, there is a bijection 
\begin{equation}
\label{eq:delta vs m}
    \psi: \dfrac{\delta + 2\mathbb{Z}^s}{2M\mathbb{Z}^s}  \xrightarrow{\sim}  \dfrac{m + 2\mathbb{Z}^s}{2M\mathbb{Z}^s},\ \psi([a]) = [a+Mu]
\end{equation}
which commutes with the conjugation and homology actions. 
See also \cite[Section 4.2]{GM}, in particular the discussion surrounding \cite[Equation (36)]{GM}. 
Unless otherwise stated, by $\spinc(\Gamma)$ we will mean the set in equation \eqref{eq:spinc closed}.

We now recall three moves on plumbing graphs called \emph{the type \hyperlink{A}{(A)}, \hyperlink{B}{(B)}, and \hyperlink{C}{(C)} Neumann moves}, described in Figure \ref{fig:Neumann moves closed}. 
Figure \ref{fig:Neumann moves closed} is to be interpreted as follows. 
A type \hyperlink{A}{(A)} move applied to a plumbing graph $\Gamma$ at an edge $e$ results in a new plumbing graph $\Gamma'$ which is identical to $\Gamma$ except the edge $e$ is subdivided into two edges meeting at a new vertex which is given a weight of $-1$ and the weights of the other two vertices bounding the original edge $e$ are both decreased by $1$. 
The type \hyperlink{B}{(B)} and \hyperlink{C}{(C)} moves are similarly interpreted from the figure.

\begin{figure}[H]
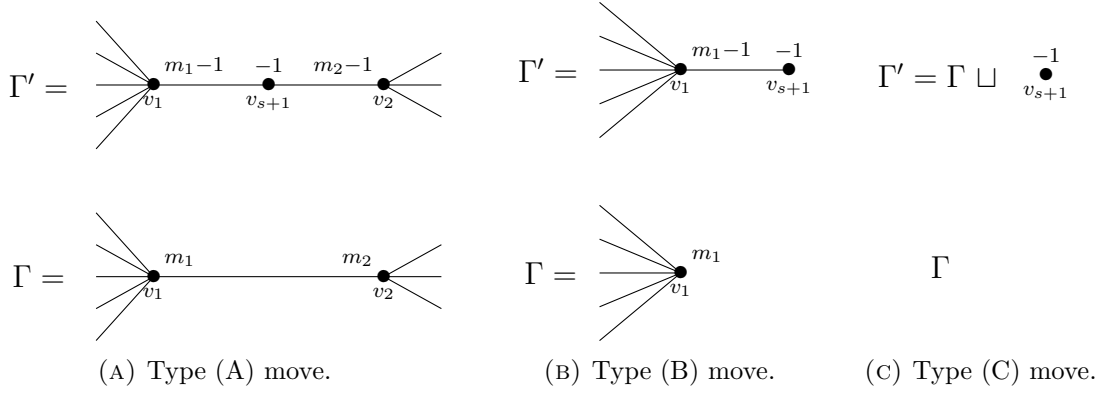

\centering
\subcaptionbox{Type \hypertarget{A}{(A)} move. 
\label{fig:type a0}
}[.4\linewidth]
{\includestandalone{figures/type_a}
}
\subcaptionbox{Type \hypertarget{B}{(B)} move.
\label{fig:type b0}
}[.3\linewidth]
{\includestandalone{figures/type_b}
}
\subcaptionbox{Type \hypertarget{C}{(C)} move.
\label{fig:type C}
}[.2\linewidth]
{\includestandalone{figures/type_C}
}
\caption{Two Neumann moves for negative definite plumbing graphs.}\label{fig:Neumann moves closed}
\end{figure}

For plumbing graphs $\Gamma$ and $\Gamma'$ related by one of the three moves in Figure \ref{fig:Neumann moves closed}, we use the symbol $'$ to denote data associated to $\Gamma'$, for instance $m', \delta'$, and $M'$. 
When writing in coordinates, we follow the convention that the ordering of the relevant vertices is as shown in Figure \ref{fig:Neumann moves closed}.

To each Neumann move we associate bijections 
\[
\alpha: \frac{ \delta + 2\Z^{s}}{2M\Z^s} \xrightarrow{} \frac{\delta'+ 2\Z^{s+1}}{2 M' \Z^{s+1}} \ \text{ and }\ \beta:
\frac{ m + 2\Z^{s}}{2M\Z^s} \xrightarrow{} \frac{m'+ 2\Z^{s+1}}{2 M' \Z^{s+1}}
\]
between the $\spinc$ structures of the corresponding plumbing graphs, in terms of both identifications \eqref{eq:spinc closed} and \eqref{eq:spinc with delta}. \\
\noindent
\textbf{Type \hyperlink{A}{(A)}}
\begin{align}
\label{eq:A0 closed}
\alpha : 
\alpha([a]) = [(a, 0)], \hspace{2em}\beta([k]) = [(k, 0) + (-1,-1, 0, \ldots, 0, 1)]
\end{align}
\textbf{Type \hyperlink{B}{(B)}}
\begin{align}
\label{eq:B0 closed}
    \alpha([a]) = [(a, 0) + (-1,0,\ldots,0,1)], \hspace{2em} \beta([k]) = [(k, 0) + (-1, 0,\ldots, 0,1)]
\end{align}
\textbf{Type \hyperlink{C}{(C)}}
\begin{align}
\label{eq:C}
    \alpha([a]) = [(a, 0)],\hspace{2em} \beta([k]) = [(k, -1)]
\end{align}
These maps fit into the commutative square   \eqref{eq:lattice square}.

\begin{equation}
\label{eq:lattice square}
\begin{tikzcd}
   \dfrac{ \delta + 2\Z^{s}}{2M\Z^s} \arrow[r,"\psi"] \ar[d, "\alpha"] & \dfrac{m+ 2\Z^{s}}{2 M \Z^{s}} \arrow[d, "\beta"]\\
   \dfrac{\delta'+ 2\Z^{s+1}}{2 M' \Z^{s+1}}\arrow[r,"\psi"]  & \dfrac{m'+ 2\Z^{s+1}}{2 M' \Z^{s+1}} 
\end{tikzcd}.
\end{equation}
The following explains the relevance of the Neumann moves.

\begin{thm}[{\cite{Neumann}}]
\label{thm:Neumann closed}
    Let $\agraph$ and $\agraph'$ be two negative definite plumbing trees. 
    Then $Y(\agraph)$ and $Y(\agraph')$ are diffeomorphic if and only if $\agraph$ and $\agraph'$ are related by a finite sequence of type \hyperlink{A}{(A)} and \hyperlink{B}{(B)} Neumann moves. 
\end{thm}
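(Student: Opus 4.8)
The plan is to recognize the statement as Neumann's theorem \cite{Neumann} and to outline the two directions. The \emph{if} direction I would check directly in the framed-link picture: under the dictionary $\agraph \leadsto \mathcal{L}(\agraph)$, a type \hyperlink{A}{(A)} move inserts a $-1$-framed unknot clasping the two meridians attached to the endpoints of the subdivided edge, and a type \hyperlink{B}{(B)} move inserts a $-1$-framed unknot linking a single meridian once. In each case $\mathcal{L}(\agraph)$ and $\mathcal{L}(\agraphp)$ differ by a blow-up, equivalently $X(\agraph)$ and $X(\agraphp)$ differ by connected sum with $\overline{\CP}{}^2$; since this operation does not change the boundary, $\amfld(\agraph)\cong\amfld(\agraphp)$. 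One also notes that negative definiteness is preserved, which is transparent from the block decomposition of $M'$ (a $-1$-vertex splits off an orthogonal $\langle-1\rangle$ summand).

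For the \emph{only if} direction I would argue through a notion of minimal form. Call a negative definite plumbing tree \emph{minimal} if no inverse type \hyperlink{A}{(A)} or type \hyperlink{B}{(B)} move applies to it, i.e.\ it admits no blow-down to another negative definite plumbing tree. The first step is reduction: starting from any negative definite plumbing tree, keep blowing down wherever possible; each blow-down is an inverse type \hyperlink{A}{(A)} or \hyperlink{B}{(B)} move, preserves negative definiteness (a $-1$-vertex generates a $\langle-1\rangle$ summand of the intersection form), keeps the graph a tree, and removes a vertex, so the process terminates at a minimal negative definite plumbing tree. Hence every negative definite plumbed $3$-manifold is represented by a minimal negative definite plumbing tree, reached from any given tree representative by a finite sequence of type \hyperlink{A}{(A)} and \hyperlink{B}{(B)} moves.

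The main obstacle --- and the real content of the theorem --- is \emph{uniqueness}: if $\agraph$ and $\agraphp$ are minimal negative definite plumbing trees with $\amfld(\agraph)\cong\amfld(\agraphp)$, then $\agraph\cong\agraphp$ as weighted graphs. Given this, the theorem is formal: reduce $\agraph$ and $\agraphp$ to minimal form by type \hyperlink{A}{(A)} and \hyperlink{B}{(B)} moves, identify the minimal forms, and splice the first sequence with the reverse of the second. To prove uniqueness I would extract from a minimal negative definite plumbing tree an intrinsic topological description of $\amfld(\agraph)$: maximal chains of degree-$2$ vertices record continued-fraction (Seifert) data, the vertices of degree $\neq 2$ together with their incidences record how $\amfld(\agraph)$ is glued from Seifert-fibered pieces along an embedded family of tori, and minimality is what forces these tori to be incompressible and the family to be the canonical one. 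One then invokes Waldhausen's classification of graph manifolds and the uniqueness of the JSJ decomposition to conclude that this combinatorial data is a diffeomorphism invariant of $\amfld(\agraph)$, so that $\agraph$ is recovered from $\amfld(\agraph)$. Negative definiteness is precisely what eliminates the remaining moves of Neumann's general plumbing calculus (the $0$-chain absorption and edge-reorientation moves, which require zero weights or cycles) and what makes a minimal form both exist and be unique; the real labor lies in the bookkeeping of the degenerate pieces --- lens spaces, small Seifert-fibered spaces, and the case $\amfld(\agraph)=S^3$ --- where the torus family or the Seifert data degenerates. This is carried out in full in \cite{Neumann}, which I would cite for the complete proof.
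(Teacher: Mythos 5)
The paper offers no proof of this statement; it is quoted directly from Neumann's plumbing calculus paper, so your deferral of the hard uniqueness-of-minimal-form step to \cite{Neumann} matches the paper's treatment exactly. Your sketch of the two directions (blow-ups preserving the boundary and negative definiteness for ``if''; reduction to a minimal form plus Waldhausen/JSJ uniqueness for ``only if'') is an accurate outline of how Neumann's argument actually runs, so the proposal is correct and consistent with the paper.
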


\begin{rem}
\label{rem:type C only appears for marked graphs}
    The above theorem restricts to negative definite trees and therefore uses only the type \hyperlink{A}{(A)} and \hyperlink{B}{(B)} moves. 
    In Section \ref{sec:Plumbed knot complements}, when discussing \emph{marked} plumbing graphs that yield manifolds with torus boundary, the type \hyperlink{C}{(C)} move will also be relevant. 
\end{rem}


\subsection{Plumbed knot complements}
\label{sec:Plumbed knot complements}
In this subsection, we describe an adaptation of the previous subsection to the setting of plumbed knot complements.    

A \textit{marked plumbing graph} \htarget{mgraph}$\mgraph$ is a plumbing graph with a distinguished, unweighted vertex $v_{0}$. 
We also require the graph to be a tree\footnote{One can generalize to forests, but this requires an extra  normalization in the main construction of this paper.}. 
For marked plumbing graphs, we will always index the vertices starting from $0$ rather than $1$, so that the $0$-th vertex $v_{0}$ is the marked one. 
When illustrating marked plumbing graphs, we use a hollow circle to represent the marked vertex. 

Given a marked plumbing graph $\mgraph$, we define the \textit{ambient plumbing graph} \htarget{agraph}$\agraph := \mgraph \setminus \{v_0\}$ to be the graph obtained from $\mgraph$ by deleting $v_{0}$ and all edges adjacent to $v_{0}$. 
Given an integer $m_{0}$, we also define the  \textit{surgered plumbing graph} \htarget{sgraph}$\sgraph$ to be the graph obtained from $\mgraph$ by giving $v_{0}$ the weight $m_0$. 
See Figure \ref{fig:marked, ambient, surgered} for an example. 

\begin{figure}[H]
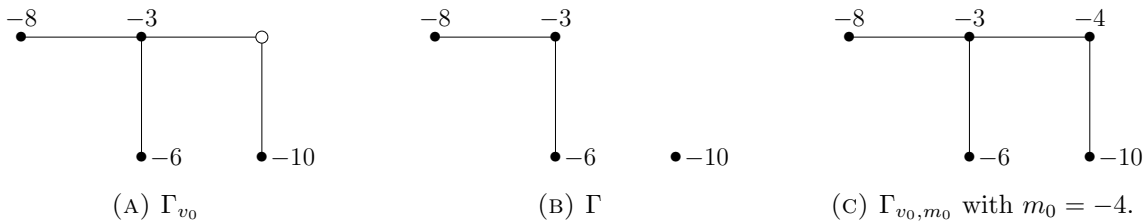

\centering
\subcaptionbox{$\mgraph$
\label{fig:marked plumbing ex}}[.3\linewidth]
{\includestandalone[scale=.8]{figures/marked_plumbing_ex}
}
\subcaptionbox{$\agraph$
\label{fig:ambient plumbing ex}
}[.35\linewidth]
{\includestandalone[scale=.8]{figures/ambient_plumbing_ex}
}
\subcaptionbox{$\sgraph$ with $m_{0} = -4$.
\label{fig:surgered plumbing ex}
}[.3\linewidth]
{\includestandalone[scale=.8]{figures/surgered_plumbing}
}
\caption{A marked plumbing and its corresponding ambient and surgered plumbing graphs.}\label{fig:marked, ambient, surgered}
\end{figure}

Fix a marked plumbing graph $\mgraph$ with $s+1$ vertices. Denote its degree vector by $\delta \in \Z^{s+1}$. 
We define three vectors $\onev, \adeg$, and  $m$ associated to the ambient plumbing graph $\agraph$.
First, set $\onev \in \Z^s$ to be the vector 
\begin{equation}
    \label{eq:onev vector}
    \onev_i =
    \begin{cases}
        1 & \text{ if } v_i \text{ is adjacent to } v_0,\\
        0 & \text{ otherwise.}
    \end{cases}
\end{equation}
Letting \htarget{adeg} $\adeg \in \Z^s$ denote the degree vector of the ambient plumbing graph $\agraph$, we have 
\[
\delta = (\delta_0, 0, \ldots, 0) + (0, \adeg + \onev).
\]
We also denote by $m = (m_1,\ldots, m_s)\in \Z^s$ the weight vector of ambient plumbing graph $\agraph$.

Consider the adjacency matrix $\mmatrix$ of $\mgraph$, where the diagonal entry corresponding to $v_0$ is left unspecified; that is, 
\[
\mmatrix =
    \begin{pmatrix}
    * &\hdual{\onev}\\
    \onev & \amatrix
    \end{pmatrix}
\]
where $M$ is the adjacency matrix of $\Gamma$. 
We say $\mgraph$ is \emph{negative definite} if $M$ is negative definite. 

Given a marked plumbing graph $\mgraph$, we associate two oriented 3-manifolds. 
As in the constructions in Section \ref{subsec:closed plumbed manifolds}, we begin by building a link $\mathcal{L} = \mathcal{L}(\mgraph)$, except now the component $L_{0}$ corresponding to $v_{0}$ does not have a framing. 
All other components have framing equal to the framing of the corresponding vertex of the plumbing. 
The \emph{ambient} $3$-manifold $\amfld$ is obtained by Dehn surgery on $\mathcal{L}\setminus L_0$, i.e., $Y= Y(\Gamma)$. 
Define a knot $\knot \subset \amfld$ to be the image of the unknot $L_{0}\subset S^3$ after performing Dehn surgery along $\mathcal{L}\setminus L_0$. 
We also define \htarget{complement} $\mmfld$ to be the complement of a tubular neighborhood of $\knot$ in $\amfld$ with a specified (unoriented) curve $\mu_{\knot}\subset \partial \mmfld$ given by a meridian of $\knot$. 

\begin{defn}
An oriented 3-manifold with torus boundary together with a specified curve $\gamma$ on its boundary is called a \emph{marked plumbed knot complement} if it is diffeomorphic to $\mmfld$ for some marked plumbing graph $\mgraph$ such that the diffeomorphism maps $\gamma$ to $\mu_{\knot}$. 
If $\mgraph$ can be chosen to be negative definite, we say $\mmfld$ is \textit{negative definite}.
\end{defn}

The vector $\onev$ described above has the following cohomological interpretation. 
Consider the 4-manifold $X = X(\Gamma)$ constructed from the ambient plumbing graph $\Gamma$ by attaching 2-handles to the 4-ball $D^4$ along the framed link $\mathcal{L}\setminus L_0$. 
Let $D^2_{0}$ be a properly embedded 2-disk in  $X$ obtained by taking a disk bounding $L_{0}$ and pushing it into $D^4$. 
In terms of the identification in \eqref{eq:H^2}, the Poincar\'{e} dual of this disk is precisely $\onev$ as defined in equation \eqref{eq:onev vector}. 

Next, we review \emph{relative spin$^c$ structures} on the manifold $\mmfld$ with torus boundary. 
We use Turaev's identification of relative $\spinc$ structures with \emph{smooth Euler structures} (see \cite{Turaev}). 
A smooth Euler structure on $\mmfld$ is an equivalence class of nowhere-vanishing vector field on $\mmfld$ that points outward along $\partial\mmfld$. 
The equivalence relation is given by declaring two such vector fields to be equivalent if there exists some $x\in \mmfld$ on which their restrictions to $\mmfld\setminus\{x\}$ are homotopic through nowhere-vanishing vector fields that point outward along the boundary. 
We will let $\spinc(\mmfld)$ denote the set of smooth Euler structures on $\mmfld$. 

Turaev \cite[Chapter VI]{Turaev} shows how a surgery presentation of a 3-manifold leads to an identification of $\spinc(\mmfld)$ with a certain lattice (or quotient lattice). 
Applying this to the surgery presentation of $\mmfld$ given by the framed link $\mathcal{L}$, one sees that $\spinc(\mmfld)$ is identified with the following set
\begin{equation}
\label{eq:relative spinc gluing}
    \dfrac{\delta + (1,\ldots, 1) + 2\Z^{s+1}}{2 \mmatrix (0 \times \Z^s)}.
\end{equation}
Note that the expression $\mmatrix (0\times \Z^s)$ is well-defined even though the entry labeled $*$ in $\mmatrix$ is unspecified because $\mmatrix (0,x) = (\hdual{\onev}x, \amatrix x) \in \Z^{s+1}$ does not depend on $*$. 
The elements of $\delta + (1,\ldots, 1) + 2\Z^{s+1}$ are called \emph{charges} in \cite[Section 2.2]{Turaev}.

The set $\spinc(\mmfld)$ comes equipped with a $\Z/2\Z$ conjugation action and an $H_{1}(\mmfld;\Z)$ action. 
In terms of \eqref{eq:relative spinc gluing}, these actions are given by 
\begin{align*}
        [b] &\mapsto [-b + (0,2,2, \ldots, 2)],\\
        [x] \cdot [b] &= [b+2x],
\end{align*}
respectively, for $b$ a charge and $x \in \Z^{s+1}$. 
Here $[x]$ is thought of as an element of $H_{1}(\mmfld;\Z)$ under the isomorphism $H_{1}(\mmfld;\Z)\cong \Z^{s+1} / \mmatrix(0 \times \Z^s)$ given by sending the $i$-th meridian of the link component $L_i$ of $\mathcal{L}$ to the coset of the standard basis vector $e_i$.

For consistency with the closed 3-manifold setting, we shift the numerator of \eqref{eq:relative spinc gluing} by $-(0,1,\ldots, 1)$ so that the conjugation action becomes multiplication by $-1$. 
With this shift in place, we make the following definition.  

\begin{defn}
\label{def:relative spinc} 
    The set of \emph{relative spin$^c$ structures} of the marked plumbing graph $\mgraph$, denoted $\spinc(\mgraph)$, is defined to be
    \begin{equation}\label{eq:rel spinc identification}
        \spinc(\mgraph) := \dfrac{\h{\delta} + 2\Z^{s+1}}{2 \mmatrix (0\times \Z^s)}.
    \end{equation}
    where $\h{\delta} = \delta+e_0$. 
\end{defn}
The conjugation and homology actions on $\spinc(\mgraph)$ are now given by
\begin{equation}
    \begin{aligned}
        \label{eq:conj and H_1 for marked graph}
        [b] &\mapsto [-b], \\
        [x] \cdot [b] &= [b+2x],
    \end{aligned}
\end{equation}
respectively, for $b\in \h{\delta} + 2\Z^{s+1}$ and $x\in \Z^{s+1}$. 

\begin{rem}
\label{rem:delta vs delta hat for rel spinc}
    In \cite[Equation (73)]{GM}, the set of relative $\spinc$ structures on $\mmfld$ is identified with 
    \begin{equation}
    \label{eq:GM rel spinc}
\dfrac{\delta+2\Z^{s+1}}{2\mmatrix(0 \times \Z^s)}.
    \end{equation}
    However, conjugation is not given by negating representatives in this identification. 
    For instance, the solid torus, which has no self-conjugate relative $\spinc$ structures, can be represented by the following marked plumbing
      \begin{equation*}
        \begin{tikzpicture}
        \draw (0,0) -- (1,0);
            \node (a) at (0,0) {$\bullet$}; 
            \draw[fill=white] (1,0) circle (2.5pt);
            \node[below] (a) {$-1$};
        \end{tikzpicture}
    \end{equation*}
    while if using \eqref{eq:GM rel spinc}  we would have $[(-1,1)] = [(1,-1)]$. 
    One may verify that conjugation on \eqref{eq:GM rel spinc} is given by $[a] \mapsto [-a + (-2,0,\ldots, 0)]$. 
    Moreover, in \cite[Section 6.2]{GM}, the set of labels $[a]$ depends on the boundary parametrization (the framing $m_0$ of $v_0$), whereas the set of relative $\spinc$ structures does not. 
\end{rem} 

We now relate $\spinc(\mmfld)$ to $\spinc(\amfld)$, where $Y = Y(\mgraph \setminus \{v_0\})$. 
The gluing formula \cite[Ch. VI]{Turaev} applied to performing $\infty$-surgery on the component $L_0$ provides a map 
\[
\spinc(\mmfld) \times \spinc(S^1\times D^2) \to \spinc(Y).
\]
Choosing an orientation on the core of $S^1\times D^2$ (equivalently, an orientation of $L_0$) fixes an identification of $\spinc(S^1\times D^2)$ with the odd integers $1 + 2\Z$, where conjugation is given simply by negation. 
This provides a surjective map 
\[
\relmap_n : \spinc(\mmfld) \to \spinc(Y)
\]
for each $n\in 1+ 2\Z$.
\begin{rem}
    While $\relmap_n$ is defined for every odd integer $n$, for our main construction in Section \ref{sec:weighted graded root for plumbed knot complements} we will consider only $n=\pm 1$. 
    We note that $\relmap_{\pm 1}$ correspond to the two maps in \cite[Section 2.2]{OS-rational} given by picking an orientation on $\knot$.
\end{rem}

We describe $\relmap_n$ in terms of the coordinate identifications \eqref{eq:rel spinc identification} and \eqref{eq:spinc closed}. First, for each $x\in \Z^{s+1}$, let
\begin{equation}
    \label{eq:restriction to Gamma}
x\mapsto x\vert_{\agraph}
\end{equation}
denote the projection $\Z^{s+1} \twoheadrightarrow \Z^s$ given by forgetting the $v_0$-th entry.
Then, in coordinates, we have
\begin{equation}\label{eq: w_n coordinates}
    \relmap_n:\spinc(\mgraph)\to\spinc(\agraph),\hspace{2em} \relmap_n([b]) = [b\vert_{\agraph}+n(\lambda+Mu)].
\end{equation}
For those who prefer to use convention \eqref{eq:spinc with delta}, we define the following map,
\begin{equation}\label{eq: p_n}
    p_{n}:\spinc(\mgraph)\to\dfrac{\adeg + 2\Z^s}{2\amatrix \Z^s},\hspace{2em} p_n([b]) = [b\vert_{\agraph}+n\lambda].
\end{equation}
We then have the below commutative diagram.
\begin{center}
\begin{equation}
\label{eq:rel spinc to abs spinc map}
   \begin{tikzcd}[column sep = 0cm]
&    \spinc(\mgraph) \ar[dl, "p_{n}"'] \ar[dr, "\relmap_{n}"] & \\
\dfrac{\adeg + 2\Z^s}{2\amatrix \Z^s} \ar[rr, "\sim"] & & \spinc(\agraph)
\end{tikzcd}
\end{equation}
\end{center}

Next, we describe Neumann moves in the setting of marked plumbing graphs. 
The type \hyperlink{A}{(A)} and \hyperlink{B}{(B)} Neumann moves from Section \ref{subsec:closed plumbed manifolds} still apply. 
We also have two additional Neumann moves, called \hyperlink{A0}{(A0)} and \hyperlink{B0}{(B0)}, that involve the marked vertex $v_{0}$. 
They are described in Figure \ref{fig:Neumann moves}. The following is a consequence of \cite[Theorem 3.2]{Neumann}; see also  \cite[Section 1]{jackson2021invariance} and \cite[Section 2]{Niemi-Colvin}.
\begin{thm}[\cite{Neumann},  \cite{jackson2021invariance}, \cite{Niemi-Colvin}]

\begin{figure}
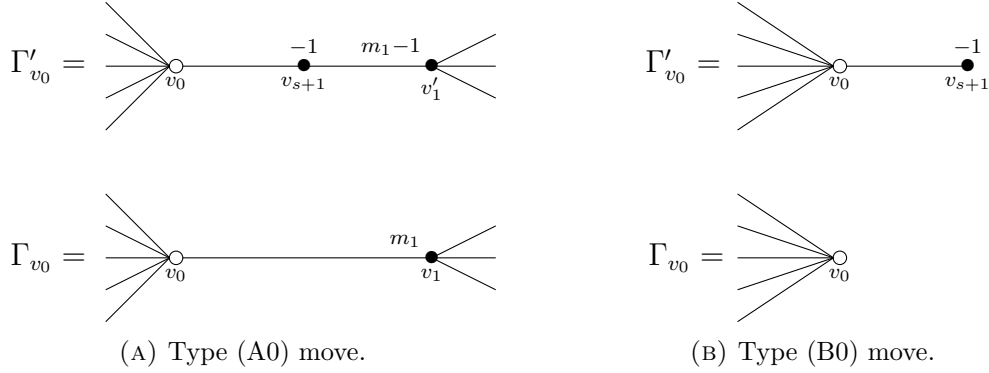

\centering
\subcaptionbox{Type \hypertarget{A0}{(A0)} move.
\label{fig:type a1}  }[.5\linewidth]
{\includestandalone{figures/type_a1}
}
\subcaptionbox{Type \hypertarget{B0}{(B0)} move.
\label{fig:type b1}}[.4\linewidth]
{\includestandalone{figures/type_b1}
}
\caption{Two  Neumann moves  involving the marked vertex for negative definite marked plumbing graphs.}\label{fig:Neumann moves}
\end{figure}

\label{thm:Neumann} 
Let $\mgraph$ and $\mgraph'$ be two negative definite marked plumbing graphs. 
Then the marked plumbed knot complements described by $\mgraph$ and $\mgraph'$ are diffeomorphic via a diffeomorphism identifying their specified boundary curves if and only if $\mgraph$ can be transformed into $\mgraph'$ by a finite sequence of the type \hyperlink{A}{(A)}, \hyperlink{A0}{(A0)}, \hyperlink{B}{(B)}, and \hyperlink{B0}{(B0)} moves. 
\end{thm}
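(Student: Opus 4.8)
# Proof Proposal for Theorem \ref{thm:Neumann}

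The statement is an equivalence, and the plan is to treat the two directions separately: the reverse implication is essentially a finite local check, while the forward implication is reduced to Neumann's plumbing calculus for graph manifolds with boundary.

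For the ``if'' direction it suffices to verify that each of the moves (A), (A0), (B), (B0) preserves the oriented diffeomorphism type of the pair $(\mmfld, \mu_{\knot})$; since the moves are invertible, any finite sequence of them then does as well. The moves (A) and (B) do not involve $v_0$, so the modification takes place entirely within the sublink $\mathcal{L}\setminus L_0$ and is a blow-up of that framed link along a small unknot. This is a Kirby move, so it leaves the surgered ambient manifold $\amfld$ unchanged while keeping the unframed component $L_0$ --- hence the knot $\knot$ and its meridian --- in place; this case is in fact already contained in Theorem~\ref{thm:Neumann closed}. For (A0) and (B0) one blows up along a small unknot encircling strands of $L_0$; a direct Kirby-calculus check (a Rolfsen twist localized near $L_0$) shows that the complement of $L_0$ in the surgered manifold and the meridian curve on its boundary are unchanged, even though the blow-up twists the boundary framing.

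For the ``only if'' direction the plan is to deduce the result from \cite[Theorem~3.2]{Neumann}. One first reinterprets a marked plumbing graph $\mgraph$ in Neumann's language: deleting the unweighted vertex $v_0$ and attaching an arrowhead to the ambient graph $\agraph$ in its place produces a plumbing graph with one arrowhead whose associated bounded $3$-manifold is exactly $\mmfld$, the arrowhead recording the boundary torus together with the meridian $\mu_{\knot}$ as the preferred curve. A diffeomorphism $\mmfld \to \mmfldp$ carrying $\mu_{\knot}$ to $\mu_{\knot'}$ is then an isomorphism of the associated boundary-parametrized graph manifolds, equivalently of the pairs $(\amfld,\knot)$, so Neumann's theorem produces a finite sequence of his moves relating the two arrowhead graphs. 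It then remains to match move sets: the moves away from the arrowhead are the classical (A) and (B) of Figure~\ref{fig:Neumann moves closed}, the ones interacting with the arrowhead are (A0) and (B0) of Figure~\ref{fig:Neumann moves}, and Neumann's remaining degenerate moves and orientation conventions are either unnecessary or, once attention is restricted to trees, expressible as compositions of (A), (A0), (B), (B0).

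The two points that require genuine care, and that I expect to be the main obstacle, are the same ones arising in the closed case. First, Neumann's calculus does not a priori preserve negative-definiteness; to remedy this one reduces both marked trees, via blow-downs of interior $(-1)$-vertices of degree at most $2$ (the cases adjacent to $v_0$ being inverses of (A0) and (B0)), to a negative-definite minimal form, and one must prove that this minimal form is unique --- this is precisely the refinement carried out in \cite{jackson2021invariance} and \cite{Niemi-Colvin}. Second, low-complexity exceptional cases --- $\mmfld$ a solid torus, $\amfld$ equal to $S^3$ or a lens space, or $\knot$ a torus knot --- admit non-unique plumbing presentations and must be checked by hand, verifying directly that the finitely many minimal marked trees for each such pair are related by our moves. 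Granting uniqueness of the minimal form, the normal forms obtained from $\mgraph$ and $\mgraph'$ coincide, and therefore $\mgraph$ can be transformed into $\mgraph'$ by a finite sequence of type (A), (A0), (B), (B0) moves.
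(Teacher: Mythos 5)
The paper does not prove this statement: Theorem \ref{thm:Neumann} is quoted as a consequence of Neumann's plumbing calculus, with the restriction to negative definite marked graphs deferred to \cite{jackson2021invariance} and \cite{Niemi-Colvin}, so there is no in-paper argument to compare against. Your outline is a faithful reconstruction of how those references establish the result: the ``if'' direction is the correct local Kirby-calculus check (for (A0) and (B0) the blow-up circle meets the unframed component $L_0$, so the complement and the meridian class survive the blow-down), and the ``only if'' direction correctly passes through Neumann's arrowhead formalism and his normal-form theorem. The one caveat worth making explicit is that your argument is a reduction, not a proof: the entire weight of the ``only if'' direction rests on the claim that the blow-down reduction of a negative definite marked tree to a minimal form is confluent and that the minimal form is unique up to graph isomorphism, together with the hand-check of the degenerate cases (solid torus, lens-space ambient manifolds, torus knots). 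You correctly identify this as the content of \cite{jackson2021invariance} and \cite{Niemi-Colvin}, but you do not supply it, so as written the proposal has the same logical status as the paper's own citation rather than an independent proof.
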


The type \hyperlink{C}{(C)} move does not appear in the above theorem because we are restricting to the case that our marked plumbing graphs are trees, but it does appear when considering how the ambient plumbing graphs transform.
The \hyperlink{A}{(A)}, \hyperlink{A0}{(A0)}, \hyperlink{B}{(B)}, \hyperlink{B0}{(B0)} moves on $\mgraph$ result in the \hyperlink{A}{(A)}, \hyperlink{B}{(B)}, \hyperlink{B}{(B)}, \hyperlink{C}{(C)} on $\agraph$, respectively. For later use, we now record how the intersection forms of $\agraph$ and $\agraphp$ transform under Neumann moves. \\
\\
\begin{align}
    \begin{aligned}
    \label{eq:how matrices transform}
        &
        \amatrixp
\overset{\hyperlink{A}{(A)}}{=}
\SmallMatrix{
\amatrix &0\\
0&0
}
+
\SmallMatrix{
-1&-1&0&\cdots&0&\phantom{-}1\\
-1&-1&0&\cdots&0&\phantom{-}1\\
\phantom{-}0&\phantom{-}0&0&\cdots&0&\phantom{-}0\\
\phantom{-}\vdots&\phantom{-}\vdots&\vdots&\vdots&\vdots&\phantom{-}\vdots\\
\phantom{-}0&\phantom{-}0&0&\cdots&0&\phantom{-}0\\
\phantom{-}1&\phantom{-}1&0&\cdots&0&-1
}, 
&& 
\amatrixp
\overset{\hyperlink{A0}{(A0)}}{=}
\SmallMatrix{
\amatrix&0\\
0&0
}
+
\SmallMatrix{
-1&0&\cdots&0&\phantom{-}1\\
\phantom{-}0&0&\cdots&0&\phantom{-}0\\
\phantom{-}\vdots&\vdots&\vdots&\vdots&\phantom{-}\vdots\\
\phantom{-}0&0&\cdots&0&\phantom{-}0\\
\phantom{-}1&0&\cdots&0&-1\\
}
\\
& && \\
&
\amatrixp 
\overset{\hyperlink{B}{(B)}}{=}
\SmallMatrix{
\amatrix &0\\
0&0
}
+
\SmallMatrix{
-1&0&\cdots&0&\phantom{-}1\\
\phantom{-}0&0&\cdots&0&\phantom{-}0\\
\phantom{-}\vdots&\vdots&\vdots&\vdots&\phantom{-}\vdots\\
\phantom{-}0&0&\cdots&0&\phantom{-}0\\
\phantom{-}1&0&\cdots&0&-1\\
}, 
&&
\amatrixp
\overset{\hyperlink{B0}{(B0)}}{=}
\SmallMatrix{
\amatrix &0\\
0&0
}
+
\SmallMatrix{
0&\phantom{-}0\\
0&-1
}
    \end{aligned}
\end{align}

For a pair of marked plumbing graphs $\mgraph$ and $\mgraphp$ related by one of the four Neumann moves, we define maps 
\begin{align}\label{eqref:eq:alpha rel maps}
    \begin{aligned}[c]
         \alpha_{rel}:\spinc(\mgraph) \rightarrow  \spinc(\mgraph')
    \end{aligned}
    \hspace{2em}
    \begin{aligned}[c]
        &\operatorname{\hyperlink{A}{(A)}} &&  \alpha_{rel}([b])  = [(b,0)] \\
        & \operatorname{\hyperlink{A0}{(A0)}} && \alpha_{rel}([b])  = [(b,0)]  \\
        & \operatorname{\hyperlink{B}{(B)}} && \alpha_{rel}([b]) = [(b,0) + (0, 1, 0, \ldots, 0, -1)] \\
        & \operatorname{\hyperlink{B0}{(B0)}} && \alpha_{rel}([b]) = [(b,0) + (1,0, \ldots, 0,-1)]
    \end{aligned}
\end{align}
For each of the \hyperlink{A}{(A)}, \hyperlink{A0}{(A0)}, \hyperlink{B}{(B)}, \hyperlink{B0}{(B0)} moves on the marked plumbing, there are $\alpha$ and $\beta$ maps on the $\spinc$ structures of the corresponding ambient plumbing given by \eqref{eq:A0 closed}, \eqref{eq:B0 closed}, \eqref{eq:B0 closed}, \eqref{eq:C}, respectively. 
We summarize the relationship between these maps in the commutative diagram \eqref{eq:spinc maps Neumman moves}. 
All maps except $p_n$ and $\relmap_n$ are isomorphisms and commute with conjugation, and we have $p_n([-b]) = -p_{-n}([b])$ and $\relmap_n([-b]) = -\relmap_{-n}([b])$. 
\begin{equation}
\label{eq:spinc maps Neumman moves}
\begin{tikzcd}[column sep = tiny,row sep = 3ex]
    & & & \spinc(\mgraph') \ar[dddl, "p_n", swap] \ar[dddr, "\relmap_n", swap, pos = .3] & \\
   & & & &  \\
    & \spinc(\mgraph) \ar[dddl, "p_n", swap] \ar[rruu, "\alpha_{rel}"] & & & \\
    & & \dfrac{\adeg' + 2\Z^{s+1}}{2M\Z^{s+1}}  \ar[rr, "\psi"] & & \dfrac{m' +2\Z^{s+1}}{2M'\Z^{s+1}}\\
    & & & & \\
    \dfrac{\adeg +2\Z^{s}}{2M\Z^s} \ar[rruu, "\alpha"] \ar[rr, "\psi"] & & \dfrac{m +2\Z^{s}}{2M\Z^s}\arrow[from=uuul, crossing over, "\relmap_n", swap, pos = .3] \ar[rruu, "\beta"]& & 
\end{tikzcd}
\end{equation}


\section{Weighted graded roots for closed plumbed 3-manifolds}\label{sec:AJK}

In this section we summarize the main construction in \cite{AJK}, which takes the form of a \emph{weighted graded root}. 
\subsection{Graded roots} 
\label{subsec:graded roots}

We begin by recalling the definition of a \emph{graded root} from  \cite[Definition 3.2]{Nem_On_the}.

\begin{defn}
\label{def:graded root} A \emph{graded root} consists of 
\begin{itemize}
    \item an infinite tree $R$, with vertices and edges denoted $\V$ and $\Edges$ respectively, and
    \item a grading function $\chi : \V \to D$ where $D\subset \Q$ is of the form $D = n\Z + \Delta $ for some $n\in \Z$ and $\Delta\in \Q$. 
\end{itemize} 
We write an edge with endpoints $u$ and $v$ as $[u,v]\in \Edges$. 
The following properties must also be satisfied. 
\begin{enumerate}
    \item \label{graded root 1} $\chi(u) - \chi(v) = \pm n$ for any $[u,v] \in \Edges$. 
    \item \label{graded root 2} $\chi(u) < \max \{ \chi(v), \chi(w) \}$ for any $[u,v], [u,w] \in \Edges$ with $v\neq w$. 
    \item \label{graded root 3} Each preimage $\chi^{-1}(i)$ for $i\in D$ is finite.
    \item \label{graded root 4} $\chi$ is bounded above and $\vert \chi^{-1}(i) \vert =1$ for all $i\in D$ with  $i\ll 0$.
\end{enumerate}

An isomorphism of graded roots is an isomorphism of the underlying graphs that respects the grading. 
For $r\in \Q$, let $R\{r\}$ denote the graded root with the same underlying tree and whose grading function $\chi\{r\}$ is obtained from $\chi$ by shifting up by $r$; 
that is, $\chi \{r\}(v) = \chi(v) + r$. 
\end{defn}

The graded roots considered in \cite{Nem_On_the, AJK} differ slightly from the above, in that item \eqref{graded root 2} is replaced with $\chi(u) > \min \{ \chi(v), \chi(w) \}$ for any $[u,v], [u,w] \in \Edges$ with $v\neq w$, and item \eqref{graded root 4} is replaced with the condition that $\chi$ is bounded below and $\vert \chi^{-1}(i) \vert = 1$ for $i\in D$ with $i\gg 0$. 
When the distinction is needed, we refer to graded roots in Definition \ref{def:graded root} as \emph{downward pointing} and those with items \eqref{graded root 2} and \eqref{graded root 4} modified as described above as \emph{upward pointing}. 
One can be transformed into the other by negating the grading function $\chi$. 

In the present paper, we work with downward pointing graded roots whose grading function $\chi$ takes values in $2\Z + \Delta$ for some $\Delta\in \Q$. 
The goal of this section is in part to clarify the relationship between these conventions appearing in the literature; 
see also \cite[Section 2.3]{Dai-Manolescu}. 

Let $\agraph$ be a negative definite plumbing tree with $s$ vertices labeled $v_1, \ldots, v_s$, weight vector $m = (m_1, \ldots, m_s)\in \Z^s$, degree vector $\delta = (\delta_1, \ldots, \delta_s)\in \Z^s$, adjacency matrix $\amatrix$, and corresponding $3$-manifold $\amfld$. 
For $K\in \Char(\Gamma)$, set
\begin{equation}
\label{eq:h_U}
h_U(K) = \frac{K^2 +s}{4},
\end{equation}
where $K^2$ is computed as in equation \eqref{eq:square}.

Let $k \in m + 2\Z^s$ be a representative for a $\spinc$ structure $[k]$ on $Y$. 
For $h \in 2\Z + h_U(k)$, define the \emph{superlevel} set
\begin{equation}
    \label{eq:sublevel set}
    \superlevel_h(\Gamma,[k]) = \{ K   \in [k] \mid h_U(K) \geq h\}.
\end{equation}
These form the $0$-cells of a $1$-dimensional CW complex,  denoted $\superlevelone_h(\Gamma,[k])$, in which two $0$-cells $K, K'$ are connected by an edge if $K-K' = \pm 2 \amatrix {e_{i}}$ for some $1\leq i \leq s$. 
Let $\pi_0(\superlevelone_h(\Gamma,[k]))$ denote the connected components of this CW complex. 
Since $M$ is negative definite, $h_U$ may be viewed as a negative definite quadratic on $[k]$; in particular, $h_U$ has a maximal nonempty superlevel set. 

\begin{rem}
If $k' \in [k]$ is another representative, then $h_U(k') - h_U(k) \in 2\Z$, so that $2\Z + h_U(k) = 2\Z + h_U(k')$.
\end{rem}

\begin{defn}
\label{def:graded root Gamma}
The (downward pointing) graded root associated to $(\agraph, [k])$, denoted $\root(\agraph,[k])$, is the graph such that:
\begin{itemize}
    \item Vertices of $\root(\agraph,[k])$ are connected components of the superlevel sets over all $h \in 2\Z + h_U(k)$: 
    \[
    \mathcal{V}(\root(\agraph,[k])) = \bigcup_{h \in 2\Z + h_U(k)} \pi_0(\superlevelone_h(\agraph,[k])).
    \]
    The grading of a vertex $C \in  \pi_0(\superlevelone_h(\agraph,[k]))$ is defined to be $h$. 
    \item Two vertices of the graded root, represented by connected components $C \subset \superlevelone_h(\agraph;[k])$ and $C' \subset \superlevelone_{h'}(\agraph;[k])$, are connected by an edge in  $\root(\agraph,[k])$ if $h' = h - 2$ and $C\subset C'$. 
\end{itemize}
\end{defn}

\begin{rem}
    Elsewhere in the literature  \cite{Dai-Manolescu} the graded root as described above also includes an overall grading shift of $-2$. 
\end{rem}

We now summarize the translation between the above downward pointing graded root and the upward pointing one appearing in \cite{Nem_On_the,AJK}. For the latter, one starts with a $\spinc$ representative $k\in \Char(\agraph)$ and considers the function 
\[
\chi^{}_k : \Z^s\to \Z
\]
given by $\chi^{}_k(x) = -\frac{1}{2}(\hdual{k}x + \hdual{x}\amatrix x)$. 
Since $\amatrix$ is negative definite, $\chi^{}_k$ is a positive definite quadratic. 
One then considers \emph{sub}level sets $\chi^{-1}_k((-\infty,i])$ for $i\in \Z$, each of which is given the structure of a $1$-dimensional CW complex with vertices $\{x \in \Z^s \mid \chi^{}_k(x)\leq i\}$ and two vertices $x, x' \in \chi^{-1}_k((-\infty,i])$ connected by an edge if $x-x' = \pm {e_i}$ for some $1\leq i \leq s$. 
The graded root in \cite{Nem_On_the} is then defined to have vertices given by connected components of all these sublevel sets, with two components $C \subset \chi^{-1}_k((-\infty,i])$ and $C'\subset \chi^{-1}_k((-\infty,i'])$ connected by an edge if $i'= i + 1$ and $C \subset C'$. 
The grading of a vertex $C \subset \chi^{-1}_k((-\infty,i])$ is defined to be $2i$. 
We denote this graph by $\root^*(\agraph, k)$. 
Since $\chi^{}_k$ is negative definite and one considers sublevel sets, $\root^*(\agraph, k)$ has a minimal grading and is an upward pointing graded root. 
As explained in \cite[Proposition 4.4]{Nem_On_the}, if $k' = k + 2My$ is another representative of $[k]$, then there is an isomorphism 
\[
\root^*(\agraph,k') \cong \root^*(\agraph,k)\{-\chi^{}_k(y)\}.
\]
We normalize so that the minimal grading is 
\begin{equation}
\label{eq:graded root d invt}
-\max_{K\in [k]} h_U(K)
\end{equation}
and denote the resulting upward pointing graded root by $\root^*(\agraph, [k])$.   
Let $-\root^*(\Gamma, [k])$ denote the downward pointing graded root obtained by negating all the gradings in $\root^*(\Gamma, [k])$.

In contrast, the present paper follows the conventions in \cite{OSS,Niemi-Colvin}. 
Rather than picking a representative of a $\spinc$ structure and working with the lattice $\Z^s$, one works with the entire set $[k]\subset \Char(\agraph)$ as the lattice. 
For a fixed choice of representative $k\in [k]$, there is an identification $\Z^s \leftrightarrow [k]$ where a point $x\in \Z^s$ corresponds to the characteristic vector $K = k + 2Mx$. 
With this translation $x\leftrightarrow K= k + 2Mx$, an edge connecting $x$ and $x' = x \pm {e_i}$ in $\Z^s$ becomes an edge connecting $K = k + 2\amatrix x$ and $K' = K \pm 2\amatrix {e_i}$ in $[k]$. 
A straightforward computation reveals 
\[
-2\chi^{}_{k}(x) = h_U(K) - h_U(k),
\]
so that there is a isomorphism $\root(\agraph,[k]) \cong -\root^*(\agraph, [k])$.

We end this subsection by summarizing the relationship between graded roots and Heegaard Floer homology. 
Let $\Z[U]$ denote the graded polynomial ring with $U$ in degree $-2$. 
A (downward or upward pointing) graded root $(R,\chi)$ determines a graded $\Z[U]$-module $\mathbb{H}(R,\chi)$ as follows. 
Suppose $\chi$ takes values in $n\Z + \Delta$. 
As an abelian group, $\mathbb{H}(R,\chi)$ is freely generated by vertices of $R$, with gradings given by $\chi$. 
For $v\in \V(R)$ with $\chi(v) = i$, let $\{u_1, \ldots, u_\ell\}$ denote the set of vertices in  $\chi^{-1}(i-\vert n\vert)$ and which are connected to $v$ by an edge, and set $U \cdot v = u_1 + \cdots + u_\ell$ (note that if $R$ is downward pointing, then $\ell= 1$). 
For a downward (resp. upward) pointing graded root $(R,\chi)$, $\mathbb{H}(R,\chi)$ is precisely the homological degree zero part of lattice homology (resp. cohomology). 

\begin{thm}[\cite{Nem_Lattice_cohomology}] If $\Gamma$ is almost rational, then there are isomorphisms of graded $\Z[U]$-modules
\begin{align*}
    \mathbb{H}(\root^*(\Gamma,[k])) &\cong HF^+(-Y(\Gamma),[k]),\\
    \mathbb{H}({\root}(\Gamma,[k])\{-2\}) &\cong HF^-(Y(\Gamma),[k]),
\end{align*}
   where in the first isomorphism, $-Y(\agraph)$ denotes $Y(\agraph)$ with reversed orientation. 
\end{thm}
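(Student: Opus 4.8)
The plan is to establish the first isomorphism in three steps and then deduce the second by duality. Step~(i): recall from Section~\ref{subsec:graded roots} that $\mathbb{H}(\root^*(\agraph,[k]))$ is, by construction, the homological degree-zero part $\mathbb{H}^0(\agraph,[k])$ of the lattice cohomology of $\agraph$, normalized so that its minimal grading is the $d$-invariant \eqref{eq:graded root d invt}. Step~(ii): show that for almost rational $\agraph$ the lattice cohomology $\mathbb{H}^q(\agraph,[k])$ vanishes for all $q\geq 1$, so that $\mathbb{H}^0$ already carries all of it; here I would exploit the distinguished vertex $v_0$ of an almost rational graph to retract the cubical complex computing lattice cohomology along the $v_0$-direction --- N\'{e}methi's ``reduction to one variable,'' in which the positive definite quadratic $\chi_k$ on $\Z^s$ is replaced by the one-variable $\tau$-function obtained by minimizing $\chi_k$ over the remaining coordinates --- onto a graph-like subcomplex, which forces the vanishing in degrees $\geq 1$.

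Step~(iii) is the topological input: identify $\mathbb{H}^0(\agraph,[k])$ with $HF^+(-Y(\agraph),[k])$. When $\agraph$ has at most one bad vertex this is Ozsv\'{a}th--Szab\'{o} \cite{OS-on_the_Floer}, who present $HF^+(-Y(\agraph),[k])$ as a combinatorial $\Z[U]$-module of finitely supported functions on $[k]\subset\Char(\agraph)$ satisfying the adjacency relations; one checks directly that this module is $\mathbb{H}(\root^*(\agraph,[k]))$, since such a function is determined by its values on the local extrema of $\chi_k$, which organize into $\root^*(\agraph,[k])$, and the $U$-actions and the $d$-invariant normalizations agree. To reach every almost rational $\agraph$, I would then induct downward on the framing $m(v_0)$ of the distinguished vertex: the base case is a framing negative enough that $\agraph$ becomes rational, where $Y(\agraph)$ is an L-space and both sides are the standard tower with matching $d$-invariant; the inductive step uses the Ozsv\'{a}th--Szab\'{o} surgery exact triangle relating $Y(\agraph)$ at framings $m_0$ and $m_0-1$ on $v_0$ (with its third term) and a combinatorial counterpart --- a long exact sequence of lattice cohomologies for the three graphs involved --- which must be shown to be intertwined, compatibly with gradings and the $U$-action, by the isomorphisms already in hand, so that the five lemma propagates the isomorphism up the induction. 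Since $\mathbb{H}^0(\agraph,[k]) = \mathbb{H}(\root^*(\agraph,[k]))$ and $\mathbb{H}^{q}=0$ for $q\geq 1$, this finishes the first isomorphism.

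The second isomorphism follows from the first together with: the $\Z[U]$-module duality $HF^-_{*}(Y(\agraph),\mathfrak{s})\cong HF^+_{-*-2}(-Y(\agraph),\mathfrak{s})$; the companion duality between lattice cohomology and lattice homology, realized at the level of graded roots simply by negating all gradings, i.e.\ by the isomorphism $\root(\agraph,[k])\cong-\root^*(\agraph,[k])$ recorded in Section~\ref{subsec:graded roots}; and the grading shift $\{-2\}$, which absorbs the $-2$ in the duality (matching, for $\agraph$ presenting $S^3$, the shift from $HF^+(S^3)$ to $HF^-(S^3)$, whose top generator sits two gradings lower). The main obstacle is Step~(iii) for general almost rational $\agraph$: building the combinatorial surgery long exact sequence and matching it term-by-term and map-by-map with the topological surgery triangle while keeping control of absolute gradings --- in particular handling the third term of the triangle, which need not be presented by a plumbing of the same convenient form. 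This is entangled with the positive-degree vanishing of Step~(ii), which is exactly what lets the degree-zero graded root capture the full $HF^{\pm}$ of an almost rational plumbed manifold.
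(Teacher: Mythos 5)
This theorem is quoted from \cite{Nem_Lattice_cohomology} (building on \cite{Nem_On_the} and \cite{OS-on_the_Floer}); the paper gives no proof of it, so there is nothing internal to compare against. Your outline --- the identification of $\mathbb{H}(\root^*(\agraph,[k]))$ with degree-zero lattice cohomology, the vanishing of $\mathbb{H}^q$ for $q\geq 1$ on almost rational graphs via the one-variable $\tau$-reduction along the distinguished vertex, the Ozsv\'{a}th--Szab\'{o} combinatorial model combined with downward induction on the framing of that vertex through the surgery exact triangle with a rational (L-space) base case, and the $HF^{\pm}$ duality together with the $\{-2\}$ shift for the second isomorphism --- is a faithful reconstruction of the strategy actually used in those references.
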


\begin{rem}
    Although in the present paper we focus on (bi)graded roots, which encode the homological degree zero part of (knot) lattice homology, a version of the above isomorphisms is known to hold in much greater generality. 
    There is a completed version of lattice  homology, built over $\F[[U]]$ where $\F=\Z/2\Z$, which can be defined for not necessarily negative definite plumbing trees \cite{OSS_spectral_sequence}. 
    Zemke \cite{Zem} has established the equivalence between this completed lattice homology and the corresponding completed version of $HF^-$. 
\end{rem}

\subsection{BPS $q$-series for closed plumbed 3-manifolds}
\label{sec:Zhat closed}

In this section, we review the BPS $q$-series for negative definite plumbings from \cite{GPPV}; see also \cite[Section 4.3]{GM}. 

Let $\agraph$ be a negative definite plumbing tree with $s$ vertices. 
We follow the conventions established in Section \ref{subsec:closed plumbed manifolds}. 
Let $a\in \delta + 2\Z^s$ be a representative of a $\spinc$ structure $[a]$ on $Y(\agraph)$, using the convention \eqref{eq:spinc with delta}. 
Define 
\begin{equation}
\label{eq:Zhat closed}
    \Zhat_a(q) := q^{- \frac{3s + \sum_v m_v}{4}}  \cdot 
    v.p.
    \oint\limits_{\vert z_v \vert =1}  \prod_{v\in \V(\Gamma)} \frac{dz_{v}}{2\pi i z_{v}}\left(z_{v}-z_v^{-1}\right)^{2-\delta_v} \cdot \Theta_a^{-M}(z),
\end{equation}
where 
\begin{equation}
\label{eq:theta function}
\Theta_a^{-M}(z) := \sum_{\ell \in a + 2M\Z^s} q^{-\frac{\hdual{\ell} M^{-1}\ell}{4}} \prod_{v\in \V(\Gamma)} z_v^{\ell_v}.
\end{equation}
In \eqref{eq:Zhat closed}, $v.p.$ indicates \emph{principal value}, the average of the integrals over $\vert z_v \vert = 1 + \epsilon$  and  $\vert z_v \vert = 1 -\epsilon$ for small $\epsilon>0$. 
Concretely, the integral is
\[
 \frac{1}{2}
  \left[\ 
    \oint\limits_{\vert z_v \vert =1-\eps}  \prod_{v\in \V(\Gamma)} \frac{dz_{v}}{2\pi i z_{v}}\left(z_{v}-z_v^{-1}\right)^{2-\delta_v}  \Theta_a^{-M}(z)
    +
    \oint\limits_{\vert z_v \vert =1+\eps}  \prod_{v\in \V(\Gamma)} \frac{dz_{v}}{2\pi i z_{v}}\left(z_{v}-z_v^{-1}\right)^{2-\delta_v}  \Theta_a^{-M}(z)
    \right]
\]
where  for $\delta_v \geq 3$ the term $\left(z_{v}-z_v^{-1}\right)^{2-\delta_v}$ is expanded as
\begin{equation}
\label{eq:two expansions zhat}
   \left( -\sum\limits_{i\geq 0} z_v^{2i+1}\right)^{\delta_v -2} \text{ if }  \vert z_v \vert <1 \text{ and } \left( \sum\limits_{i\geq 0} z_v^{-(2i+1)}\right)^{\delta_v -2}   \text { if }  \vert z_v \vert>1. 
\end{equation}

Applying $\displaystyle \oint\limits_{\vert z \vert = 1}  \frac{dz}{2\pi i z}$ to a Laurent series in $z$ or in $z^{-1}$ returns the constant term of the series. 
Consequently, the integral in \eqref{eq:Zhat closed} may be computed by taking one-half the sum of the two expansions in \eqref{eq:two expansions zhat} (which is a bi-infinite series in the variables $z_v$ for $v\in \V(\agraph)$), multiplying with $\Theta_a^{-M}(z)$, and recording the constant term. 
That $\amatrix$ is negative definite guarantees that the result is a well-defined Laurent series in $q$. 
For a further discussion  we refer the reader to \cite[Section 7]{AJK}. 
From now on we will omit $v.p.$ and the domain of integration from the notation.

Unifying the graded root and $\Zhat$ required an identification of the lattices used to define each theory. 
Namely, in \eqref{eq:Zhat closed}, the sum is over $a + 2\amatrix \Z^s = [a]$, whereas lattice homology is defined in terms of a sub-lattice of characteristic vectors $\Char(\agraph)$. 
In equation \eqref{eq:delta vs m} we identify $[a]$ with $[a+Mu] \in \spinc(\agraph)$. 
At the level of lattices, after translating between characteristic vectors and $\Z^s$ as described in Section \ref{subsec:graded roots}, the identification used in \cite{AJK} is 
\begin{align*}
    a + 2\amatrix \Z^s & \longleftrightarrow a + \amatrix u + 2\amatrix \Z^s \\
    \ell &\longleftrightarrow K = \ell + Mu.
\end{align*}
This is not canonical: any odd integer $n$ provides an identification $\ell \leftrightarrow K = \ell+ n Mu$. 
Note that, at the level of $\spinc$ structures, $[a+Mu] = [a + nMu]$ for any odd $n$. 
However, for the main constructions of this paper to be invariant under Neumann moves, this odd integer must be $\pm 1$. 
To emphasize this restriction, we denote by $\eps$, rather than $n$, a fixed choice of $\pm 1$.  The lattices are then identified via
\begin{align}
\begin{aligned}
\label{eq:identifying lattices closed}
    a + 2\amatrix \Z^s & \longleftrightarrow a + \amatrix u + 2\amatrix \Z^s \\
    \ell &\longleftrightarrow K = \ell + \eps Mu.
    \end{aligned}
\end{align}

With the identification \eqref{eq:identifying lattices closed} at hand, if we set $k = a+ Mu$, we can rewrite \eqref{eq:Zhat closed} as 
\begin{equation}
    \label{eq:Zhat closed in terms of K}
   \widehat{Z}_{a}(q) =    q^{-\frac{3s+\sum m_{v}}{4}} \sum_{K \in [k]} \h{W}_{\agraph}(K) 
q^{-\frac{(K-\eps \amatrix {\one})^2}{4}}
\end{equation}
The coefficient $\h{W}_{\agraph}(K)$, which depends on the choice of $\eps$, will be discussed in the next subsection.

\subsection{The weighted graded root for closed plumbed $3$-manifolds}

The main construction of \cite{AJK} introduces additional weights on the graded root. 
These weights depend on a choice of an \emph{admissible family of functions}, which we review now.

\begin{defn}[{\cite[Definition 4.1]{AJK}}] \label{def:admissible family}
Let $\Ring$ be a commutative ring. A family of functions $W = \{W_n : \Z \to \Ring\}_{n\geq 0}$ is called \emph{admissible} if 
\begin{enumerate}[label= (AD\arabic*)]
    \item\label{item:AD1} $W_2(0) = 1$ and $W_2(i) = 0$ for all $i\neq 0$. 
    \item \label{item:AD2} For all $n\geq 1$ and $i\in \Z$, 
    \[ W_n(i+1) - W_n(i-1) = W_{n-1}(i).\]
\end{enumerate}
\end{defn}

\begin{rem}
In \cite{AJK} an admissible family was denoted by $F$. 
Here we use a different notation to avoid confusion with the Gukov-Manolescu series $F_K$. 
\end{rem}
\begin{rem}
We will actually only use the value of $W_n$ at even numbers (resp. odd numbers) when $n$ is even (resp. odd), so we could have defined an admissible family as a family of functions $W = \{W_n : 2\Z+n \to \Ring\}_{n\geq 0}$ satisfying the two conditions above, and this would not change any of the discussions in this paper. 
\end{rem}

As noted in \cite[Equation (15)]{AJK}, conditions \ref{item:AD1} and \ref{item:AD2} determine $W_1$ and $W_0$:

\begin{equation}
\label{eq:W_1 and W_0}
W_1(i) =
\begin{cases}
1 & \text{ if } i=-1, \\
-1 & \text{ if } i=1, \\
0 & \text{ otherwise.} 
\end{cases}
\hskip3em 
W_0(i) =
\begin{cases}
1 & \text{ if } i=\pm 2, \\
-2 & \text{ if } i=0, \\
0 & \text{ otherwise.} 
\end{cases}
\end{equation}

Let us briefly discuss the above definition. 
In computing $\Zhat$ (for both closed plumbed manifolds and for plumbed knot complements) one encounters terms of the form $(z -z^{-1})^{2-n}$, $n\geq 0$. 
When $n>2$, such terms are expanded as a bi-infinite power series, and for $\Zhat$ the principal value dictates how to perform the expansion (see \cite[Section 7]{AJK}). 
Such an expansion is not unique but is essentially controlled by a choice of admissible family, as follows. 

Fix an admissible family of functions $W = \{W_n : \Z \to \Ring \}_{n\geq 0}$. 
Let $\BI$ denote the set of bi-infinite power series in a variable $z$, 
\[
\BI = \left\{ \sum_{j\in \Z} c_j z^j \mid c_j \in \Ring\right\}.
\]
In general, one cannot multiply two elements of $\BI$. 
However, $\BI$ is naturally a module over the ring of 
Laurent \emph{polynomials} $\Ring[z,z^{-1}]$. 
For $n\geq 0$, set 
\begin{equation}
\label{eq:expansion}
 (z -z^{-1})^{2-n} = \sum_{j\in \Z} W_n(-j) z^j \in \BI.
\end{equation}
The defining properties of an admissible family of functions implies that the above definition is coherent, in the following sense. 
First, property \ref{item:AD1} and equation \eqref{eq:W_1 and W_0} imply that if $0\leq n\leq 2$ then \eqref{eq:expansion} agrees with the usual expansion of $(z -z^{-1})^{2-n}$ as a Laurent polynomial. 
Moreover, property \ref{item:AD2} implies that if $0 \leq n'\leq n$ then
\[
(z-z^{-1})^{n'} \cdot (z -z^{-1})^{2-n} = (z -z^{-1})^{2-(n-n')}.
\]
Note that the first term on the left-hand side of the above equality is a Laurent polynomial, 
while the second term as well as the right-hand side are, in general, elements of $\BI$. 
In particular, if $n\geq 2$, then $(z-z^{-1})^{n-2} \cdot (z -z^{-1})^{2-n} = 1$, so that $(z -z^{-1})^{2-n}$, interpreted as in \eqref{eq:expansion}, provides an ``inverse''  to $(z-z^{-1})^{n-2}$ as an element of $\BI$.

For $x\in \Z^s$, let $x_i$ denote its $i$-th coordinate. Given an admissible family $W$ and a fixed $\eps \in \{\pm 1\}$, we define $W_{\agraph}: \Char(\agraph) \to \Ring$ by
\begin{equation}
\label{eq:W_Gamma weighted graded root}
W_{\agraph}(K) = \prod_{i=1}^s W_{\delta_i}((K-\eps \amatrix{\one})_i)
\end{equation}
where ${\one} = (1,\ldots, 1) \in \Z^s$  as defined in \eqref{eq:\one}.
Recall that for a $\spinc$ structure $[k]\in \spinc(\agraph)$, a vertex of the graded root $\root(\agraph, [k])$ corresponds to a connected 
component $C$ of some superlevel set. 
Its \emph{weight} is defined to be 
\begin{equation}
    \label{eq:weights in weighted graded root}
    W_{\agraph, [k]}(C; q,t) = q^{-\frac{3s+\sum m_{v}}{4}} t^{-\frac{ \eps \hdual{\one}  \amatrix {\one}}{2}} \sum_{K \in C\cap [k]} W_{\agraph}(K) q^{-\frac{(K-\eps \amatrix {\one})^2}{4}} t^{\frac{\hdual{K} {\one}}{2}}
\end{equation}
where on the left-hand side we include $[k]$ in the subscript to indicate that we are working within the sub-lattice of $\Char(\agraph)$ determined by the chosen $\spinc$ structure $[k]$, 
and in the sum on the right-hand side we write $K\in C\cap [k]$ to emphasize that the sum is over vertices ($0$-cells) of $C$. 
Note that $C$ has finitely many vertices since it is compact. 
We do not include the choice of $\eps$ in the notation in the left-hand sides of \eqref{eq:W_Gamma weighted graded root}
and \eqref{eq:weights in weighted graded root} to avoid clutter. The graded root equipped with these weights is called the \emph{weighted graded root} and denoted by 
\[
\root_\eps(\Gamma,[k],W).
\]

Let us explain the origin of the above weights. In \eqref{eq:Zhat closed in terms of K} the coefficients $\h{W}_{\agraph}(K)$ can be computed via \eqref{eq:W_Gamma weighted graded root} using the admissible family 
\begin{equation}
\label{eq:W hat}
\h{W} = \left\{ \h{W}_n: \Z \to \Q\right\}
\end{equation}
as defined in \cite[Definition 7.1]{AJK} (denoted by $\h{F}$ therein). Each $\h{W}_n$ is obtained from expanding $\left(z-z^{-1}\right)^{2-n}$ as a bi-infinite power series in a specific way. 
Namely, this expansion is one-half the sum of the two expansions in \eqref{eq:two expansions zhat}; see also \cite[Equations (33) and (34)]{AJK}. 
Using the identification of lattices $\ell \leftrightarrow K = \ell + \eps \amatrix u$ as in \eqref{eq:identifying lattices closed}, the contribution of $\ell$ to the integral in \eqref{eq:Zhat closed} is precisely 
\[
\h{W}_{\agraph}(K) q^{-\frac{3s+\sum m_{v}}{4}-\frac{(K-\eps \amatrix {\one})^2}{4}},
\]
where $[k] = [a+Mu]$.

The variable $t$ in \eqref{eq:weights in weighted graded root} was introduced in \cite{AJK}. 
When the admissible family is $\h{W}$, this gives a two-variable\footnote{To clarify, the result is a Laurent series in $q$ whose coefficients are Laurent polynomials in $t$.} refinement of $\Zhat$, denoted $\Zhathat$ in \cite[Section 7.3]{AJK}. 
This two-variable series can also be defined by modifying the integrand in \eqref{eq:Zhat closed}:
\begin{equation}
\label{eq:Zhathat closed}
    \Zhathat_a(q,t) := q^{- \frac{3s + \sum_v m_v}{4}}  \cdot
    \oint\limits \prod_{v\in \V(\Gamma)} \frac{dz_{v}}{2\pi i z_{v}}\left(t^{-1/2}z_v-t^{1/2}z_v^{-1}\right)^{2-\delta_v} \cdot \Theta_a^{-M}(z).
\end{equation}
Negative powers of $t^{-1/2}z_v-t^{1/2}z_v^{-1}$ are interpreted according to \eqref{eq:expansion} via the substitution $z \mapsto t^{-1/2}z_v$ and with respect to the admissible family $\h{W}$.
The integral is interpreted as recording the coefficient of the constant term of the integrand, as discussed in Section \ref{sec:Zhat closed}.  
Therefore, the contribution to the $t$-power for a fixed $K\in [k]$ is precisely 
\[
\dfrac{- \eps \hdual{u}\amatrix u + \hdual{K}u }{2}.
\]

More generally, for any admissible family $W$, consider
\begin{equation}
\label{eq:two variable series closed}
    q^{- \frac{3s + \sum_v m_v}{4}}  \cdot
     \oint  \prod_{v\in \V(\Gamma)} \frac{dz_{v}}{2\pi i z_{v}}\left(t^{-1/2}z_v-t^{1/2}z_v^{-1}\right)^{2-\delta_v} \cdot \Theta_a^{-M}(z).
\end{equation}
As for $\Zhathat$, in the above formula negative powers of $t^{-1/2}z_v-t^{1/2}z_v^{-1}$ are expanded according to \eqref{eq:expansion} with respect to $W$, and the integral records the constant term. 
The result is precisely the two-variable series defined in \cite[Section 6]{AJK}. 
Equivalently, as in \cite[Remark 6.5]{AJK}, \eqref{eq:two variable series closed} is equal to
\begin{equation}
    q^{- \frac{3s + \sum_v m_v}{4}} t^{-\frac{\eps \hdual{u}\amatrix u}{2}} \sum_{K\in [k]} W_{\agraph}(K) q^{-\frac{(K-\eps \amatrix {\one})^2}{4}} t^{\frac{\hdual{K} {\one}}{2}}.
\end{equation}
Note that this two-variable series does not depend on the choice of $\eps$. 

When $\eps=1$, the downward pointing weighted grading root $\root_1(\agraph, [h], W)$ is obtained from the upward pointing one introduced in \cite{AJK} by negating gradings. 
To see this, using the translation $x \leftrightarrow K= k + 2\amatrix x$ described in Section \ref{subsec:graded roots}, one can see that for $\eps=1$ we have 
\begin{align}
\begin{aligned}
\label{eq:comparing chi and h}
-\frac{3s+\sum m_{v} +(K-\amatrix  {\one})^2}{4} &= - \frac{ 3s + \sum m_v + (k-M  {\one})^2 }{4} + 2\chi^{}_k(x) + \hdual{x}\amatrix  {\one}, \\
\frac{\hdual{K} {\one} -  \hdual{\one}\amatrix  {\one}}{2} &= \frac{\hdual{k}{\one} -  \hdual{\one}\amatrix  {\one}}{2} + \hdual{x}M {\one}.
\end{aligned}
\end{align}
Therefore the contribution of $K$ to the weight \eqref{eq:weights in weighted graded root} is equal to the contribution of $x$ in \cite{AJK}; 
see in particular \cite[Equation (13), Notation 5.1, and Definition 5.2]{AJK}. 

\begin{thm}[{\cite[Theorem 5.9]{AJK}}] 
\label{thm:weighted graded root invariance}
Let $W$ be an admissible family of functions. Suppose $\agraph$ and $\agraphp$ are negative definite plumbing trees related by a type \hyperlink{A}{(A)} or type \hyperlink{B}{(B)} Neumann move. 
Let $\beta : \spinc(\agraph) \to \spinc(\agraphp)$ denote the corresponding bijection as in \eqref{eq:A0 closed} and \eqref{eq:B0 closed}, and let $[k]\in \spinc(\agraph)$. 
Then the weighted graded roots $\root_\eps(\agraph, [k], W)$ and $\root_\eps(\agraphp, [\beta(k)], W)$ are isomorphic. 
\end{thm}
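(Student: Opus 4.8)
The plan is to prove invariance by directly comparing the weighted graded roots of $\agraph$ and $\agraphp$ vertex-by-vertex, leveraging the fact that Theorem~\ref{thm:Neumann closed} (and the discussion of $\spinc$ bijections) already guarantees an abstract isomorphism at the level of underlying graded roots; what remains is to track the three-variable weights through this isomorphism. First I would reduce to the two move types separately, treating type \hyperlink{A}{(A)} and type \hyperlink{B}{(B)} in turn, and in each case set up explicit coordinates: if $\agraph$ has $s$ vertices with adjacency matrix $\amatrix$, weight vector $m$, degree vector $\delta$, then $\agraphp$ has $s+1$ vertices with $\amatrixp$ given by the formulas in \eqref{eq:how matrices transform} (restricted to the closed case), and I would write the new characteristic lattice $[\beta(k)] \subset \Char(\agraphp)$ in terms of the old one via the $\beta$ map in \eqref{eq:A0 closed} or \eqref{eq:B0 closed}.

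The heart of the argument is a lattice-level bijection. For a type \hyperlink{A}{(A)} move (subdividing an edge $[v_i,v_j]$ with a new $(-1)$-vertex $v_{s+1}$ and decrementing the weights of $v_i,v_j$), the key is that the new vertex has degree $2$, so by \ref{item:AD1} the factor $W_2$ appearing in $W_{\agraphp}$ forces the $(s+1)$-st coordinate of $K' - \eps \amatrixp \one'$ to vanish. This single constraint cuts the lattice $[\beta(k)]$ down to a copy of $[k]$: I would exhibit an explicit affine-linear map $K \mapsto K'$ realizing this, check it is a bijection onto the locus where the $W_2$ factor is nonzero, and then verify three things match up --- (i) the remaining product $\prod_{i=1}^s W_{\delta'_i}((K'-\eps\amatrixp\one')_i)$ equals $W_{\agraph}(K)$ (the degrees of $v_i,v_j$ are unchanged since the new vertex replaces an edge-endpoint), (ii) the $q$-exponent $(K'-\eps\amatrixp\one')^2$ computed with $\amatrixp^{-1}$ equals $(K-\eps\amatrix\one)^2$ up to the universal shift absorbed by the grading-normalization, and (iii) the $t$-exponent $\hdual{(K')}\one'$ matches $\hdual{K}\one$ up to a constant. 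A parallel but simpler analysis handles type \hyperlink{B}{(B)}, where a degree-$1$ leaf is attached and the factor $W_1$ from \eqref{eq:W_1 and W_0} again pins down the new coordinate to $\pm 1$ with sign $\eps$; here one must be slightly more careful because $W_1$ is supported on two points rather than one, but the negative-definiteness of $\amatrixp$ together with the requirement that $K'$ be characteristic selects the correct sign.

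I expect the main obstacle to be the bookkeeping in step (ii): showing the quadratic form $K^2 = \hdual{K}\amatrix^{-1}K$ is preserved under the move. This is where the choice $\eps = \pm 1$ (as opposed to a general odd $n$) becomes essential --- the shift vector $\eps\amatrix\one = \eps(m+\delta)$ interacts with the $M\one = m+\delta$ identity in a way that only closes up for $\eps^2 = 1$ with the correct cross-terms, and one has to invert the rank-one-perturbed matrices $\amatrixp$ appearing in \eqref{eq:how matrices transform} explicitly (e.g.\ via the Sherman--Morrison formula, or by a direct block computation using the $(-1)$ or new-diagonal entry). Once the quadratic form, the $W$-product, and the $t$-grading are each shown invariant, the isomorphism of weighted graded roots follows: the superlevel sets $\superlevel_h$ are preserved because $h_U$ is preserved, their connected components correspond because the edge relations $K - K' = \pm 2\amatrix e_i$ are carried to $K' - (K')' = \pm 2\amatrixp e_i$ under the lattice bijection, and the weight of each component --- being a sum over its $0$-cells of a quantity we have shown is preserved term-by-term --- agrees. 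I would conclude by noting that this is exactly the content of the cited \cite[Theorem 5.9]{AJK}, so the proof reduces to recalling that argument with the $\eps$-dependence and the extra variable $t$ made explicit, and I would only spell out the type \hyperlink{A}{(A)} computation in full, indicating that type \hyperlink{B}{(B)} is analogous.
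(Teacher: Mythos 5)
Your treatment of the type \hyperlink{A}{(A)} move matches the paper's argument: the degree-$2$ vertex forces the new coordinate of $K'-\eps \amatrixp\one$ to vanish via \ref{item:AD1}, the nonzero-weight locus is exactly the image of the affine map $\beta_\eps$, and the $q$-, $t$-, and $W$-contributions match term by term. However, your type \hyperlink{B}{(B)} argument contains a genuine error. You claim that $W_1$ "pins down the new coordinate to $\pm 1$ with sign $\eps$" and that "negative-definiteness together with the requirement that $K'$ be characteristic selects the correct sign." Neither mechanism selects a sign: for every $K\in[k]$, \emph{both} $\beta_+(K)=(K,0)+(-1,0,\dots,0,1)$ and $\beta_-(K)=(K,0)+(1,0,\dots,0,-1)$ are characteristic vectors in $[\beta(k)]$ with the same value of $h_U$, and both generically carry nonzero weight (indeed $W_1(1)=-1$ and $W_1(-1)=1$). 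The correspondence of lattice points is two-to-one, not a bijection, so your plan of exhibiting a single affine-linear bijection onto the support of the weight cannot work here.

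The correct mechanism --- and the reason the notion of admissible family is defined the way it is --- is that the two contributions must be \emph{summed}. Since the degree of $v_1$ increases by one under the move (a fact your step (i) overlooks for this case), the two terms are $W_{\delta_1+1}(\til{K}_1+1)\,W_1(-1)$ and $W_{\delta_1+1}(\til{K}_1-1)\,W_1(1)$ times the common factor $\prod_{i\geq 2}W_{\delta_i}(\til{K}_i)$, where $\til K = K - \eps \amatrix\one$, and property \ref{item:AD2} gives
\begin{equation*}
W_{\delta_1+1}(\til{K}_1+1)-W_{\delta_1+1}(\til{K}_1-1)=W_{\delta_1}(\til{K}_1),
\end{equation*}
so the sum recovers $W_{\agraph}(K)$. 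One also needs to observe that $\beta_+(K)$ and $\beta_-(K)$ differ by $2\amatrixp e_{s+1}$ and hence lie in the same connected component of the superlevel set, so that summing over a component of $\superlevelone_h(\agraphp,[\beta(k)])$ collects both partners of each $K$; without this, the component-wise equality of weights does not follow. With these corrections the rest of your outline (preservation of $h_U$, of edges, and the cancellation of the $-1$ shift in $(K'_\pm-\eps\amatrixp\one)^2$ against the $+1$ shift in $3(s+1)+\sum m'_v$) goes through as in the paper.
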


For $\eps =1$ Theorem \ref{thm:weighted graded root invariance} follows from the above discussion and \cite[Theorem 5.9]{AJK}. 
Since the proofs of invariance in the two cases $\eps=\pm 1$ are essentially identical, rather than treating the case $\eps=-1$ separately we will provide a proof for a general $\eps$.

To begin, we define how the lattices will transform under Neumann moves. 
These maps are lifts of the maps $\beta$ in \eqref{eq:A0 closed}, \eqref{eq:B0 closed}, \eqref{eq:C} and will also be used later in Section \ref{sec:invariance}. 
To that end, we  define maps $\beta_{\pm}: m + 2\Z^s\to m' + 2\Z^{s+1}$ as follows:
\\
\\
\textbf{Type \hyperlink{A}{(A)}}
\begin{align}
\label{eq:A0 closed lattice}
\beta_{\pm}(k) = (k, 0) \pm(-1,-1, 0, \ldots, 0, 1)
\end{align}
\textbf{Type \hyperlink{B}{(B)}}
\begin{align}
\label{eq:B0 closed lattice}
\beta_{\pm}(k) = (k, 0) \pm(-1, 0,\ldots, 0,1)
\end{align}
\textbf{Type \hyperlink{C}{(C)}}
\begin{align}
\label{eq:C lattice}
\beta_{\pm}(k) = (k, \pm 1)
\end{align}
We note that while we record the maps $\beta_\pm$ for the Type \hyperlink{C}{(C)} move, as in Remark \ref{rem:type C only appears for marked graphs} it  will only be relevant in Section \ref{sec:BPS q series for plumbed knot complements} when considering marked plumbing graphs.

\begin{lem}
    \label{lem:beta1 and beta-1 induce isos on graded roots}
    For negative definite plumbing trees $\Gamma$ and $\Gamma'$ related by a type \hyperlink{A}{(A)} or \hyperlink{B}{(B)} Neumann move, the corresponding map $\beta_\pm$ from \eqref{eq:A0 closed lattice} and \eqref{eq:B0 closed lattice} induces an isomorphism of graded roots $\root(\agraph, [k]) \cong \root(\agraphp, [k'])$. 
\end{lem}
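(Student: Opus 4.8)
The plan is to show that $\beta_\pm$ gives a bijection between the relevant sublattices of characteristic vectors that preserves (up to an overall constant) the function $h_U$, and hence carries superlevel sets to superlevel sets and induces the claimed graded-root isomorphism. First I would check that $\beta_\pm$ is well-defined as a map $[k]\to[k']$, i.e.\ that it sends $m+2M\Z^s$ into $m'+2M'\Z^{s+1}$ and descends to the map $\beta$ of \eqref{eq:A0 closed}--\eqref{eq:B0 closed} on $\spinc$ classes; this is a direct computation using the explicit forms of $M'$ in \eqref{eq:how matrices transform} (restricted to the closed case) together with the definitions \eqref{eq:A0 closed lattice} and \eqref{eq:B0 closed lattice}. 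Since $\Gamma'$ has one more vertex than $\Gamma$, to get a bijection onto $[k']$ I would exhibit the inverse explicitly: given $K'\in[k']$, the value of its last coordinate is determined modulo $2$ by the $\spinc$ class and can be further normalized using the relation coming from the extra basis vector $e_{s+1}$, so that there is a unique representative in the image of $\beta_\pm$.

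\textbf{The key invariance.} The heart of the argument is the identity
\begin{equation*}
h_U^{\Gamma'}(\beta_\pm(K)) = h_U^{\Gamma}(K) + c
\end{equation*}
for a constant $c$ independent of $K\in[k]$, where $h_U(K) = \tfrac{K^2+s}{4}$ and the superscripts record which plumbing is used. For the type \hyperlink{A}{(A)} and \hyperlink{B}{(B)} moves this is essentially the classical fact (going back to \cite{Neumann} and used throughout the lattice homology literature, e.g.\ \cite{Nem_On_the}) that blow-ups do not change $(K')^2$ when $K'$ is the ``correct'' extension of $K$; concretely, the new $(-1)$-framed vertex contributes a term $\tfrac{(K'_{v_{new}})^2}{-1}$ to $(K')^2 = \hdual{(K')}(M')^{-1}K'$ that is cancelled by the modifications to the neighboring entries. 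I would verify this by writing $M' = \hdual{P} \left(\begin{smallmatrix} M & 0 \\ 0 & -1\end{smallmatrix}\right) P$ (or the analogous block decomposition) for the appropriate unimodular change of basis $P$ realizing the Neumann move, from which $(M')^{-1}$ and hence $(K')^2$ can be computed; alternatively one can cite the computation $-2\chi_k(x) = h_U(K)-h_U(k)$ from Section \ref{subsec:graded roots} together with the known behavior of the $\chi_k$ functions under Neumann moves as in \cite[proof of Theorem 5.9]{AJK}.

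\textbf{Assembling the graded-root isomorphism.} Once $\beta_\pm: [k]\to[k']$ is a bijection with $h_U^{\Gamma'}\circ\beta_\pm = h_U^\Gamma + c$, I would check that it also respects the edge relation defining $\superlevelone_h$: the edges in $\superlevelone_h(\Gamma,[k])$ join $K, K'$ with $K-K' = \pm 2Me_i$, and one verifies that $\beta_\pm(K) - \beta_\pm(K') = \pm 2 M' e_i$ for the corresponding vertex (using again the explicit matrices in \eqref{eq:how matrices transform}), while the image does not create or destroy any adjacencies involving the new vertex $e_{s+1}$ because the normalization pinning down the last coordinate forces all vertices in a fixed superlevel component to have the same last coordinate. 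Hence $\beta_\pm$ carries $\superlevelone_h(\Gamma,[k])$ isomorphically onto $\superlevelone_{h+c}(\Gamma',[k'])$ as CW complexes, inducing a bijection on connected components compatible with inclusion, and therefore an isomorphism $\root(\Gamma,[k]) \cong \root(\Gamma',[k'])$ of graded roots up to the overall shift by $c$; since the graded roots are normalized so that their maximal gradings agree with \eqref{eq:graded root d invt}, which is itself $h_U$-invariant, the shift disappears and one gets an honest isomorphism.

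\textbf{Main obstacle.} The main technical obstacle is the bookkeeping for the new vertex: showing that $\beta_\pm$ is actually \emph{onto} $[k']$ (not merely injective) requires carefully using the extra generator $2M'e_{s+1}$ of the lattice $2M'\Z^{s+1}$ to bring an arbitrary representative into the image, and one must check this normalization is consistent with the edge structure so that superlevel \emph{components} — not just superlevel sets — match up. The case analysis over the moves \hyperlink{A}{(A)} and \hyperlink{B}{(B)} is routine but must be done carefully since the perturbation vectors in \eqref{eq:A0 closed lattice} and \eqref{eq:B0 closed lattice} differ; handling $\eps = -1$ as well as $\eps = +1$ uniformly, as the paper proposes, is automatic here since Lemma \ref{lem:beta1 and beta-1 induce isos on graded roots} concerns only the graded root (no $\eps$-dependent weights), and the two choices of sign in $\beta_\pm$ are handled in parallel.
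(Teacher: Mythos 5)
There is a genuine gap in the step where you pass from superlevel \emph{sets} to superlevel \emph{components}. Your justification that ``the normalization pinning down the last coordinate forces all vertices in a fixed superlevel component to have the same last coordinate'' is false: edges of $\superlevelone_h(\agraphp,[k'])$ in the $e_{s+1}$ direction change the last coordinate, and indeed $\beta_+(K)$ and $\beta_-(K)$ always lie in the same component (they differ by $2\amatrixp e_{s+1}$) while having different last coordinates. Since $\beta_\pm$ maps a rank-$s$ lattice coset into a rank-$(s+1)$ one, it is never surjective on $0$-cells; what must be shown is that it induces a bijection on $\pi_0$ of superlevel sets. That requires (i) a monotonicity argument showing every $K'\in\superlevel_h(\agraphp,[k'])$ is connected, \emph{within} the superlevel set, to a point in the image of $\beta_\pm$, and (ii) that paths in $\superlevelone_h(\agraphp,[k'])$ project to paths in $\superlevelone_h(\agraph,[k])$. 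Neither appears in your proposal. The paper handles this by citing N\'emethi (\cite[Proposition 4.6]{Nem_On_the}, \cite[Proposition 3.4.2]{Nem_Lattice_cohomology}) after the translation of Section \ref{subsec:graded roots}, adding only the observation that $\beta_+$ and $\beta_-$ differ by an edge and hence induce the same isomorphism; the analogous argument is written out in full for the type \hyperlink{C}{(C)} case in Lemma \ref{lem:ambient graded root is invariant}, which is a good model for what your ``main obstacle'' paragraph would actually have to contain.

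Two smaller corrections. First, $(K')^2 = K^2 - 1$, not $K^2$: the new $(-1)$-vertex is not ``cancelled,'' but the loss of $1$ is exactly compensated by $s\mapsto s+1$ in $h_U = \frac{K^2+s}{4}$, so $h_U(\beta_\pm(K)) = h_U(K)$ on the nose. Second, you must have $c=0$, not merely a constant: the graded root of Definition \ref{def:graded root Gamma} carries absolute gradings given by the values of $h_U$, with no normalization; the normalization \eqref{eq:graded root d invt} you invoke applies to the upward-pointing $\root^*$, not to $\root(\agraph,[k])$, so a nonzero shift would not ``disappear.''
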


\begin{proof}
Let $K\in \Char(\agraph)$ and set $K' = \beta_\pm(K)$. We first verify that in all cases, $h_U(K') = h_U(K)$, so that $\beta_{\pm}$ restricts to a map $\superlevel_h(\agraph, [k]) \to \superlevel_h(\agraphp, [k'])$, for all $h\in h_U(k) + 2\Z$.
Note that this is equivalent to $\hdual{(K')} (\amatrixp)^{-1} K' = \hdual{K} \amatrix^{-1} K - 1$.

For the  type \hyperlink{A}{(A)} move, if $\amatrix^{-1} K = x= (x_1, x_2, \ldots, x_s)$, then it follows from \eqref{eq:how matrices transform} that $\amatrixp (x, x_1+x_2 \mp 1) = K'$. Then 
\[
\hdual{(K')} (\amatrixp)^{-1} K' = \hdual{[ (K,0) \pm (-1,-1,0,\ldots,0, 1)]}  (x, x_1+x_2\mp1) = \hdual{K} \amatrix^{-1} K -1. 
\]
For the type \hyperlink{B}{(B)} move, if $\amatrix^{-1} K = x= (x_1, \ldots, x_s)$, then from \eqref{eq:how matrices transform} we see that $\amatrixp (x, x_1\mp 1) = K'$. Then 
\[
\hdual{(K')} (\amatrixp)^{-1} K' = \hdual{[ (K,0) \pm (-1,0,\ldots,0, 1)]}  (x, x_1\mp1 ) = \hdual{K} \amatrix^{-1} K -1,
\]
which verifies $h_U(K') = h_U(K)$.

After performing the translation explained in Section \ref{subsec:graded roots}, one of $\beta_{+}$ or $\beta_{-}$ is equal to the map in \cite[Proposition 4.6]{Nem_On_the}, \cite[Proposition 3.4.2]{Nem_Lattice_cohomology}, which is shown to induce an isomorphism of graded roots. 
However, $\beta_+(K)$ and $\beta_{-}(K)$ are connected by an edge in $\superlevel_h(\agraphp, [k'])$ since $\beta_-(K) = \beta_{+}(K) + 2 \amatrixp e_{s+1}$. 
Therefore $\beta_+$ and $\beta_{-}$  induce (equal) isomorphisms of graded roots. 

\end{proof}

\begin{proof}[Proof of Theorem \ref{thm:weighted graded root invariance}]

Fix $h\in h_U(k) + 2\Z$. 
For a connected component $C \subset \superlevelone_h(\agraph, [k])$, let $C' \subset \superlevelone_h(\agraph, [k])$ be the corresponding component of $\superlevelone_h(\agraphp, [k'])$ under the isomorphism in Lemma \ref{lem:beta1 and beta-1 induce isos on graded roots}. 
Precisely, $C'$ is the connected component which contains $\beta_\pm(K)$ for any $K\in C$. 
For the type \hyperlink{A}{(A)} move we will use the map $\beta_\eps$ from \eqref{eq:A0 closed lattice} corresponding to our fixed $\eps\in \{\pm 1\}$, while for the type \hyperlink{B}{(B)} move we will use both maps $\beta_\pm$ from \eqref{eq:B0 closed lattice}.

Let us address the type \hyperlink{A}{(A)} move. 
For $K'\in \Char(\agraphp)$, $ K'-\eps M' u = K' - \eps (Mu,0) - \eps (-1,-1,0,\ldots, 0, 1)$. 
Since $\delta'_{s+1}=2$, property \ref{item:AD1} implies that $W_{\agraphp}(K') = 0$ if $K'_{s+1}\neq \eps$. 
If $K'_{s+1}= \eps$, then by setting $K = (K'_1 +\eps, K'_2 + \eps, K'_3, \ldots, K'_s) \in \Char(\agraph)$ we have $\beta_{\eps}(K) = K'$. 
Therefore only vertices in the image of $\beta_\eps$ contribute to $W_{\agraphp, [k']}(C';q,t)$. 
The contributions of $K$ and $K'$ are equal because 
\begin{align*}
    W_{\agraphp}(K') = \prod_{i=1}^{s+1} W_{\delta_i'}((K'-\eps M'u)_i) &= \prod_{i=1}^{s} W_{\delta_i}((K-\eps Mu)_i) \cdot W_{2}(0) = W_{\agraph}(K), \\
    \hdual{(K')}u &= \hdual{K}u - \eps, \\
    \eps \hdual{u} (M')u  &= \eps \hdual{u} M u -\eps, \\
    3(s+1) + \sum_{v\in \V(\agraphp)} m'_v &= 3s + \sum_{v\in \V(\agraph)} m_v, \\
    (K'-\eps M' u)^2 & = (K-\eps Mu)^2,
\end{align*}
which completes the proof in the type \hyperlink{A}{(A)} case. 

Let us now address the type \hyperlink{B}{(B)} move. 
For $K'\in \Char(\agraphp)$, $K' - \eps M'u =
K' -\eps (Mu,0)$. Since $\delta_{s+1}' = 1$, from \eqref{eq:W_1 and W_0} we see that $W_{\agraphp}(K')= 0$ if $K'_{s+1}\neq \pm 1$. 
If $K'_{s+1} = \pm 1$, then $K'$ is in the image of $\beta_{\pm }$:  
\[
K' = \begin{cases}
\beta_+(K'_1 +1, K'_2, \ldots, K'_s) & \text{ if } K'_{s+1} = 1, \\
\beta_{-}(K'_1 -1, K'_2, \ldots, K'_s) & \text{ if } K'_{s+1} = -1 ,
\end{cases}
\]
so only vertices in the image of $\beta_\pm$ contribute to the weight. 
For $K\in \superlevel_h(\agraph, [k])$, we set $\til{K} = K -\eps M u$. Then, using \ref{item:AD2} and \eqref{eq:W_1 and W_0}, we have 
\begin{align*}
&\quad\;  
W_{\agraphp}(\beta_{-}(K)) + W_{\agraphp}(\beta_{+}(K)) \\
&= \prod_{i=1}^{s+1} W_{\delta'_i}(((\til{K},0) + (1,0,\ldots, 0, -1))_i) + \prod_{i=1}^{s+1} W_{\delta'_i}(((\til{K},0) + (-1,0,\ldots, 0, 1))_i) \\
&= W_{\delta_1 +1} (\til{K}_1 +1 ) W_1(-1)\prod_{i=2}^{s} W_{\delta_i}(\til{K}_i) + W_{\delta_1 +1} (\til{K}_1 - 1 ) W_1(1) \prod_{i=2}^{s} W_{\delta_i}(\til{K}_i) \\
&  = \left[ W_{\delta_1 +1} (\til{K}_1 +1 )  - W_{\delta_1 +1} (\til{K}_1 - 1 )  \right] \prod_{i=2}^{s} W_{\delta_i}(\til{K}_i) \\
&= W_{\agraph}(K).
 \end{align*}
We also have 
\begin{align*}
\left(\beta_{\pm}(K) - \eps M'u\right)^2 &= (K-\eps Mu)^2  - 1, \\
3(s+1) + \sum\limits_{v\in \V(\Gamma')} m'_v &= 3s + \sum_{v\in \V(\Gamma)} m_v + 1,  \\
\hdual{(\beta_{\pm }(K))}u  - \eps\hdual{u} M' u &= \hdual{K}u - \eps\hdual{u}Mu,
\end{align*}
and it follows that $W_{\agraph,[k]}(C; q,t)  = W_{\agraphp, [k']}(C';q,t)$. 
\end{proof}

\subsection{Conjugation of $\spinc$ structures revisited}
\label{sec:spinc conjugation revisited}

In this section we analyze the effect of $\spinc$ conjugation on the weighted graded root. 
We begin by recalling the following property of an admissible family of fuctions $W$, introduced in \cite[Section 8]{AJK}:
\begin{equation}
\label{eq:AD3}
    W_n(-i) = (-1)^n W_n(i) \text{ for all } n\geq 0 \text{ and } i\in \Z. \tag{AD3} 
\end{equation}
While $\h{W}$ satisfies \eqref{eq:AD3}, not all admissible families do (see, for example, the admissible families $\h{F}^{\pm}$ from \cite[Definition 7.2]{AJK}). 
As discussed in \cite[Example 8.4]{AJK}, the weighted graded root is not invariant under $\spinc$ conjugation. 
However, we have the following result, which may be viewed as a refinement of \cite[Proposition 8.1]{AJK}.

\begin{prop}
\label{prop:epsilon vs -epsilon}
    Let $W$ be an admissible family of functions which satisfies \eqref{eq:AD3}. 
    For any negative definite plumbing tree $\agraph$ and $\spinc$ structure $[k]\in \spinc(\agraph)$, $\root_{-\eps}(\agraph, [-k], W)$ is obtained from $\root_\eps(\agraph, [k], W)$ by the change of variables $t\mapsto t^{-1}$. 
\end{prop}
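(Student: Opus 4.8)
The strategy is to compare the two weighted graded roots term-by-term at the level of the defining lattices. Recall from Lemma~\ref{lem:beta1 and beta-1 induce isos on graded roots} and the discussion in Section~\ref{subsec:graded roots} that the underlying graded root $\root(\agraph,[k])$ only depends on the function $h_U$ on the sublattice $[k]\subset \Char(\agraph)$, and since $h_U(-K) = h_U(K)$, the map $K\mapsto -K$ induces an isomorphism of underlying graded roots $\root(\agraph,[k])\cong \root(\agraph,[-k])$ sending a component $C$ of a superlevel set to the component $-C$ at the same grading $h$. So the only content is that this isomorphism matches the weights, after the substitution $t\mapsto t^{-1}$.

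First I would fix a component $C\subset \superlevelone_h(\agraph,[k])$ and write down $W_{\agraph,[-k]}(-C;q,t)$ using \eqref{eq:weights in weighted graded root} with $\eps$ replaced by $-\eps$. The sum is over $K'\in (-C)\cap[-k]$, i.e.\ over $K' = -K$ for $K\in C\cap[k]$. The key pointwise identities to establish are: (i) $W_{\agraph}$ computed with parameter $-\eps$ at $-K$ equals, up to a sign $(-1)^{\sum_i \delta_i}$, the quantity $W_{\agraph}$ computed with parameter $\eps$ at $K$ — this uses property \eqref{eq:AD3} coordinatewise, noting that $(-K - (-\eps)M\one)_i = -(K-\eps M\one)_i$; (ii) the $q$-exponent is unchanged, since $(-K-(-\eps)M\one)^2 = (K-\eps M\one)^2$; (iii) the $t$-exponent flips sign: $\hdual{(-K)}\one/2 = -\hdual{K}\one/2$, and likewise the prefactor exponent $-(-\eps)\hdual{\one}M\one/2 = \eps\hdual{\one}M\one/2$ becomes the negative of the original; (iv) the overall $q$-prefactor $q^{-(3s+\sum m_v)/4}$ is manifestly unchanged. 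Then I would check that the sign $(-1)^{\sum_i \delta_i}$ is in fact $+1$: for a plumbing tree $\sum_i \delta_i = 2\#\{\text{edges}\} = 2(s-1)$ is even, so the sign disappears.

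Assembling these, each term of $W_{\agraph,[-k]}(-C;q,t)$ equals the corresponding term of $W_{\agraph,[k]}(C;q,t)$ with every power $t^a$ replaced by $t^{-a}$, which is exactly the claim that $\root_{-\eps}(\agraph,[-k],W)$ is obtained from $\root_\eps(\agraph,[k],W)$ by $t\mapsto t^{-1}$. One should also note the bookkeeping point that the grading $h = h_U(K)$ of a node, which determines the position in the graded root, is invariant under $K\mapsto -K$, so the weighted-root isomorphism is grading-preserving in the $q$-grading sense and only modifies the internal $t$-dependence.

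The main obstacle I expect is the careful matching of the $t$-exponents: the prefactor $t^{-\eps\hdual{\one}M\one/2}$ in \eqref{eq:weights in weighted graded root} and the per-term factor $t^{\hdual{K}\one/2}$ must be tracked together, and one has to be sure that replacing $\eps\rightsquigarrow-\eps$ and $K\rightsquigarrow-K$ simultaneously flips the sign of the \emph{total} exponent $(\hdual{K}\one - \eps\hdual{\one}M\one)/2$, rather than introducing an unwanted shift; since both summands in this exponent are odd under the joint substitution, this works, but it is the step that requires the most care. The use of \eqref{eq:AD3} and the parity of $\sum_i\delta_i$ are the only genuinely substantive inputs; everything else is the symmetry $K\leftrightarrow -K$ already implicit in the construction.
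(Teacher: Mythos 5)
Your proposal is correct and follows essentially the same route as the paper's proof: the involution $K\mapsto -K$ on $\Char(\agraph)$, the use of \eqref{eq:AD3} coordinatewise producing the sign $(-1)^{\sum_i\delta_i}$ (which vanishes by the handshake lemma), and the verification that the $q$-exponent is fixed while the total $t$-exponent is negated. No gaps.
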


\begin{proof}
    Consider the involution $\iota$ of $\Char(\agraph)$ given by $\iota(K) = -K$. 
    We have $h_U(K) = h_U(\iota(K))$, so that $\iota$ restricts to a map of superlevel sets $\iota: \superlevel_h(\agraph, [k]) \to \superlevel_h(\agraph, [-k])$, which is evidently an isomorphism of $1$-dimensional CW complexes. 
    Thus $\iota$ induces an isomorphism of graded roots $\iota_*: \root(\agraph, [k]) \xrightarrow{\sim} \root(\agraph, [-k])$, and we will show $\iota_*$ respects the weights. 

    We have 
    \[
\prod_{v\in \V(\agraph)} W_{\delta_v} (( -K +\eps \amatrix u)_v) = (-1)^{\sum_v \delta_v} \prod_{v\in \V(\agraph)} W_{\delta_v} (( K-\eps \amatrix u)_v) = \prod_{v\in \V(\agraph)} W_{\delta_v} (( K-\eps \amatrix u)_v)
    \]
    where the first equality follows from property \eqref{eq:AD3}, and the second equality follows from the fact that the sum of degrees in any graph is even. 
    The left-most term above is the weight of $-K$, equation \eqref{eq:W_Gamma weighted graded root}, for $-\eps$, while the right-most term is the weight of $K$ for $\eps$. 
    To address the powers of $q$ and $t$, we have
    \begin{align*}
    (-K + \eps \amatrix {\one})^2 & = (K-\eps \amatrix {\one})^2, \\
       \frac{\eps \hdual{u}\amatrix u + \hdual{(-K)}u }{2} & =  -\frac{- \eps \hdual{u}\amatrix u + \hdual{K}u }{2}
    \end{align*}
    which completes the proof. 
\end{proof}

The above result implies that if $[k]$ is self-conjugate, then the corresponding weighted graded roots for $\eps$ and $-\eps$ differ by replacing $t$ and $t^{-1}$.


\section{Weighted bigraded roots for plumbed knot complements}\label{sec:weighted_bigraded_roots}

\subsection{Knot lattice homology and bigraded roots}

In this subsection we review part of the construction in \cite{Niemi-Colvin}, following the notation established in Section \ref{sec:Plumbed knot complements}.

Let $\mgraph$ be a negative definite marked plumbing graph with $\vert \V(\mgraph)\vert = s+1$. 
As usual, we set $\agraph = \mgraph \setminus \{v_0\}$ and $\amfld = Y(\agraph)$. 
Denote by $m\in \Z^s$ and by $\amatrix$ the weight vector and adjacency matrix of $\agraph$, respectively. 
Let $k \in \Char(\agraph) = m + 2\Z^{s}$ be a representative of a $\spinc$ structure $[k] \in \spinc(\agraph)$.

Recall  the function $h_U : [k] \to 2\Z + h_U(k)$ from \eqref{eq:h_U}. Given $K \in [k]$, define
\begin{align}
  \label{eq:h_V}  h_{V}(K) &= h_{U}(K+2 {\onev}),  \\
    \label{eq:A} A(K) &= \frac{h_U(K)-h_V(K)}{2},
    \end{align}
where $\onev$ is as defined in \eqref{eq:onev vector}. The quantity $A(K)$ is called the \emph{Alexander grading} of $K$.

Let us discuss a useful alternative perspective on the Alexander grading. 
For $m_0\in \Z$, recall that $\sgraph$ denotes the plumbing tree obtained from $\mgraph$ by giving $v_0$ the framing $m_0$, with corresponding intersection form on its associated $4$-manifold $X(\sgraph)$ denoted by $\smatrix$. 
Pick any $m_0$ so that $\sgraph$ is negative definite. 
A computation yields
\begin{equation}
\label{eq:inverse of e_0}
    \smatrix^{-1} e_{0}  =\frac{1}{m_0- \hdual{\onev}\amatrix ^{-1} \onev }\begin{pmatrix}
        1\\
        -\amatrix^{-1}\onev
            \end{pmatrix}.
\end{equation}
Following \cite[Equation (3.1)]{OSS}, let \htarget{Sigma}
\[
\Sigma \in H_2(X(\sgraph);\Q) \cong \Q^{s+1}
\]
denote the  element whose $v_0$-th entry is $1$ and which satisfies $\hdual{e_i} \smatrix \Sigma =0$ for $1\leq i \leq s$. 
This element exists and is unique. 
In fact, it follows from \eqref{eq:inverse of e_0} that
\begin{equation}
    \label{eq:Sigma}
    \Sigma = \dfrac{\smatrix^{-1} e_0}{\hdual{e_0}\smatrix^{-1}e_0} = (1, -\amatrix^{-1}\onev). 
\end{equation}

The Alexander grading of $K\in \Char(\agraph)$ in \cite[Definition 3.2]{OSS} is defined to be
\[
\frac{1}{2} ( L_K(\Sigma) + \Sigma^2)
\]
where $L_K = (-m_0, K) \in \Char(\sgraph)$. 
In terms of our coordinates, when writing $L\in \Z^{s+1}$ and $\Sigma\in \Q^{s+1}$, $L(\Sigma)$ is  $\hdual{L}\Sigma$ and the homology pairing $\Sigma^2$ of $\Sigma$ with itself is given by $\hdual{\Sigma} \smatrix \Sigma$. Note, $\Sigma^2 < 0$ since $\smatrix$ is negative definite. 
From \eqref{eq:Sigma} we see that 
\begin{equation}
    \label{eq:Sigma^2}
    \Sigma^2 = m_0 - \hdual{\onev}\amatrix^{-1} \onev,
\end{equation}
and it follows that $ \frac{1}{2} ( L_K(\Sigma) + \Sigma^2) = A(K)$. 

For $(i,j)\in (2\Z+h_{U}({k}))\times(2\Z+h_{V}({k}))$, we define the following sets:
\begin{align*}
    \superlevel^{U}_{i}(\mgraph,[k])&= \{K\in [{k}]\mid h_{U}(K)\geq i\},\\
    \superlevel^{V}_j (\mgraph,[k]) &= \{K\in [k]\mid h_{V}(K)\geq j\},\\
    \superlevel_{i,j}(\mgraph,[k]) &= \superlevel^U_{i}(\mgraph,[k])\cap \superlevel^V_{j}(\mgraph,[k]). 
\end{align*}
We give each of these sets the structure of a 1-skeleton by declaring the elements of the above sets to be the $0$-cells, two of which $K_{1}, K_{2}$ share an edge if and only if $K_{1}-K_{2}=\pm 2M  {e_i}$ for some $1\leq i\leq s$. 
We denote these $1$-skeleta by $\superlevelone{}^{U}_{i}(\mgraph,[k]), \superlevelone{}^{V}_j (\mgraph,[k])$, and $\superlevelone_{i,j}(\mgraph,[k])$, and denote their connected components by $\pi_{0}(\superlevelone^U_{i}( \mgraph,[k])), \pi_0(\superlevelone^V_{j}( \mgraph,[k])), \pi_{0}(\superlevelone_{i,j}(\mgraph,[k]))$.

\begin{rem}
We note that $\superlevelone_{i,j}(\mgraph,[k])$ are the $1$-skeleta of the double filtration studied in \cite{Niemi-Colvin}, see in particular \cite[Section 5.2]{Niemi-Colvin}.
\end{rem}

\begin{defn}
\label{def:three graded roots}
We define three graphs associated to this data:
\begin{itemize}
    \item The \emph{bigraded root} $\rootbi(\mgraph,[k])$. Its set of vertices is  
    \[
    \bigcup_{ \substack{ i \in 2\Z+h_{U}({k})\\ j\in 2\Z+h_{V}({k})} }
   \pi_{0}(\superlevelone_{i,j}(\mgraph,[k])).
    \]
    Two vertices corresponding to connected components $C_{i,j} \subset \superlevelone_{i,j}(\mgraph,[k])$ and $C_{i', j'} \subset \superlevelone_{i',j'}(\mgraph,[k])$ are connected by an edge in $\rootbi(\mgraph,[k])$ if either $i'=i+2, j' = j$ and $C_{i',j'}\subset C_{i,j}$, or if $i'=i, j'=j+2$ and $C_{i',j'}\subset C_{i,j}$. The vertex $C_{i,j}$ lies in bigrading $(i,j)$. 
    \item The \emph{$U$-graded root} $\root^U(\mgraph,[k])$ whose set of vertices is 
    \[
    \bigcup\limits_{i \in 2\Z+h_{U}({k})} \pi_{0}(\superlevelone{}^U_{i}( \mgraph,[k])),
    \]
    and two vertices $C_{i} \subset \superlevelone^U_{i}( \mgraph,[k])$ and $C_{i'} \subset \superlevelone{}^U_{i'}( \mgraph,[k])$ are connected by an edge if $i'=i+2$ and $C_{i'}\subset C_i$. 
    The grading of $C_{i}$ is defined to be $i$. 
    \item The \emph{$V$-graded root} $\root^V(\mgraph,[k])$ is defined in an entirely analogous way to the $U$-graded root, but with the $U$ replaced with $V$. 
\end{itemize}
Note that $\root^U(\mgraph,[k]) = \root(\agraph,[k])$ and that $\root^V(\mgraph, [k]) = \root(\agraph, [k+2\onev])$. 
\end{defn}

\begin{exmp}
\label{ex:bigraded root unknot in S3}
Consider the graph $\mgraph$ shown below. 
\begin{equation*}
    \begin{tikzpicture}
    \draw (0,0) -- (1,0);
        \node (a) at (0,0) {$\bullet$}; 
        \draw[fill=white] (1,0) circle (2.5pt);
        \node[below] (a) {$-1$};
    \end{tikzpicture}
\end{equation*}
The ambient $3$-manifold $Y$ is $S^3$, and the image of the unknot corresponding to the marked vertex is the unknot in $S^3$. 
There is one $\spinc$ structure on $Y$, which we denote $\mathfrak{s}_0$.  For $K\in \Char(\agraph) = 1 +2\Z$, 
\[
h_U(K) = \frac{K^2 + 1}{4}, \hskip2em h_V(K) = \frac{(K+2)^2 +1}{4},
\]
The maximal nonempty superlevel set $\superlevelone_{i,j}(\mgraph,\mathfrak{s}_0)$ is at bigrading $(i,j) = (0,0)$, and since in this case the lattice is $1$-dimensional, by convexity we see that $\superlevelone_{i,j}(\mgraph,\mathfrak{s}_0)$ consists of one connected component for each $(i,j) \in 2\Z_{\leq 0} \times 2\Z_{\leq 0}$. 
The bigraded root is  shown in \eqref{eq:bigraded root unknot}. 
    \small
    \begin{equation}
    \begin{aligned}\label{eq:bigraded root unknot}
        \includestandalone{figures/unknot_bigraded_root}
    \end{aligned}
    \end{equation}
    \normalsize
\end{exmp}

\begin{rem}
\label{rem:recovering graded root from bigraded root} 
Both the $U$-graded root and the $V$-graded root can  be obtained from the bigraded root as follows. 
For a given $i \in 2\Z + h_U(k)$, by picking $j \ll 0$ we have $\superlevelone{}^U_{i}( \mgraph,[k]) \subset \superlevelone{}^V_{j}( \mgraph,[k])$, so that 
\[
\superlevelone_{i,\ell}( \mgraph,[k]) = \superlevelone{}^U_{i}(\mgraph,[k])
\]
for all $\ell \leq j$. 
The vertices at height $i$ in $\root^U(\mgraph,[k]) = \root(\agraph,[k])$ can then be read off from  $\rootbi( \mgraph,[k])$ at bigrading $(i,j)$. 
Similarly, if $i'=i-2$, then by potentially decreasing $j$ further, edges between vertices at height $i$ and $i'$ of $\root(\agraph,[k])$ can be determined from edges in the $U$-direction of $\rootbi( \mgraph,[k])$ in bigradings $(i,j)$ and $(i',j)$. 
The graded root $\root(\agraph,[k+2\onev]) = \root^V(\mgraph, [k])$ can analogously be recovered from $\rootbi(\mgraph, [k])$. 
We refer to the above procedure of obtaining either the $U$-graded root or the $V$-graded root as \emph{collapsing} the bigraded root.

\end{rem}

\begin{defn}
\label{def:coordinates}
    The \emph{coordinate} of a node $\eta$ of $\rootbi(\mgraph, [k])$ in bigrading $(i,j)$ is the pair $(\eta_1, \eta_2)$ where $\eta_1$ (resp. $\eta_2$) is the unique node of $\root(\agraph,[k])$ (resp.\ of $\root(\agraph, [k+2\onev])$) in grading $i$ (resp. $j$) which corresponds to $\eta_1$ after collapsing to the $U$-graded (resp. $V$-graded) root. 
\end{defn}

We stress that the notion of coordinates is finer than the notion of bigrading, since in general, nodes in $\rootbi(\mgraph, [k])$ in the same bigrading may have different coordinates. 

\begin{rem}
    For simplicity, the examples provided in the present paper have the property that each $(i,j)$-superlevel set has at most one connected component, so that there is at most one node at each bigrading.
    Consequently, the bigraded root can be depicted in the plane. 
    Moreover, whenever the ambient manifold is $S^3$ (which is the case in our examples) all the nodes in a given bigrading have the same coordinate. 
\end{rem}

The following theorem is a special case of \cite[Theorem 1.2]{Niemi-Colvin}.

\begin{thm}{\cite{Niemi-Colvin}}\label{thm:bigraded root invariance}
Suppose $\mgraph$ and $\mgraphp$ are related by a Neumann move. Let $\beta : \spinc(\agraph) \to \spinc(\agraphp)$ be the corresponding bijection as defined in \eqref{eq:A0 closed}, \eqref{eq:B0 closed}, and \eqref{eq:C}, and let $[k] \in \spinc(\agraph)$ be a $\spinc$ structure. 
Then the bigraded roots $\rootbi(\mgraph,[k])$ and $\rootbi (\mgraphp,\beta([k]))$ are isomorphic.
\end{thm}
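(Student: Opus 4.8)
The plan is to reduce the statement to what has already been established for (single-graded) graded roots. Recall from Definition \ref{def:three graded roots} that a node of $\rootbi(\mgraph, [k])$ is a connected component of $\superlevelone_{i,j}(\mgraph, [k])$, which is by definition the $1$-skeleton on the set $\superlevel^U_i(\mgraph,[k]) \cap \superlevel^V_j(\mgraph,[k]) = \{K \in [k] \mid h_U(K) \geq i \text{ and } h_V(K) \geq j\}$, and recall that $\root^U(\mgraph,[k]) = \root(\agraph,[k])$ while $\root^V(\mgraph, [k]) = \root(\agraph, [k+2\onev])$. So the bigraded root is determined by the two functions $h_U$ and $h_V$ on the lattice $[k]$, together with the edge structure given by the $\pm 2M e_i$ moves. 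The strategy is: for each of the four Neumann moves relevant to marked graphs (type (A), (A0), (B), (B0), which induce moves (A), (B), (B), (C) on $\agraph$), produce an explicit lattice map $[k] \to [k']$ — namely the restriction of the map $\beta_\pm$ from \eqref{eq:A0 closed lattice}, \eqref{eq:B0 closed lattice}, \eqref{eq:C lattice} to $\agraph$, appropriately interpreted on $\Char$ — and show it simultaneously preserves $h_U$ and $h_V$ (up to the correct grading shift) and is compatible with edges.

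First I would handle the type (A) and (B) moves on $\agraph$. For these, Lemma \ref{lem:beta1 and beta-1 induce isos on graded roots} already gives that $\beta_\pm$ induces an isomorphism of graded roots $\root(\agraph,[k]) \cong \root(\agraphp,[k'])$; in particular $h_U(\beta_\pm K) = h_U(K)$. The key new point is that the \emph{same} map also satisfies $h_V(\beta_\pm K) = h_V(K)$, i.e. $h_U(\beta_\pm K + 2\onev') = h_U(\beta_\pm(K + 2\onev))$, or equivalently $\beta_\pm$ intertwines the shift by $2\onev$ on $\agraph$ with the shift by $2\onev'$ on $\agraphp$. This is a direct check using how $\onev$ transforms: when a type (A) or (B) move is performed \emph{away} from $v_0$ (these come from moves (A), (A0), (B) on $\mgraph$ in the bookkeeping, but what matters is the location relative to $v_0$), the new vertex is not adjacent to $v_0$, so $\onev' = (\onev, 0)$, and one checks $\beta_\pm(K) + 2(\onev,0) = \beta_\pm(K + 2\onev)$ from the explicit formulas — both sides just append the same entries. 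For the (B0) move, which induces a type (C) move on $\agraph$ adding a new vertex of weight $-1$ adjacent to nothing relevant, the map $\beta_\pm(K) = (K, \pm 1)$ again manifestly commutes with the $\onev$-shift since $\onev' = (\onev, 0)$. Since $\beta_\pm$ preserves both $h_U$ and $h_V$ it carries $\superlevel_{i,j}(\mgraph,[k])$ into $\superlevel_{i,j}(\mgraphp,[k'])$ for all $(i,j)$; one then argues it is a bijection on each such set (using that the map on the ambient lattice, after the usual $\Z^s \leftrightarrow [k]$ translation, is essentially a coordinate inclusion with the last coordinate forced, exactly as in the proof of Theorem \ref{thm:weighted graded root invariance}) and that it respects edges (the $\pm 2M e_i$, $1\leq i \leq s$, edges correspond bijectively, and the possible discrepancy $\beta_+(K)$ vs.\ $\beta_-(K)$ differs by $2M'e_{s+1}$, hence lies in one component).

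The case requiring the most care is the type (C) move on $\agraph$ that is genuinely new here — the one coming from the (B0) move on $\mgraph$ — and, more importantly, making sure the $\spinc$ bookkeeping is consistent: one must verify that the lattice map $\beta_\pm$ on $[k]$ descends to the claimed bijection $\beta$ on $\spinc$ structures from \eqref{eq:A0 closed}, \eqref{eq:B0 closed}, \eqref{eq:C}, so that $\rootbi(\mgraphp, \beta([k]))$ is literally the root built from the image lattice. This compatibility is recorded in the commutative diagrams \eqref{eq:lattice square} and \eqref{eq:spinc maps Neumman moves} and follows by inspecting how $2M\Z^s$ maps into $2M'\Z^{s+1}$ under the relevant coordinate embedding. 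Once all of this is in place, the isomorphism of bigraded roots follows formally: $\beta_\pm$ gives compatible bijections $\pi_0(\superlevelone_{i,j}(\mgraph,[k])) \xrightarrow{\sim} \pi_0(\superlevelone_{i,j}(\mgraphp,[k']))$ for every $(i,j)$, these are compatible with the containments $C_{i+2,j}\subset C_{i,j}$ and $C_{i,j+2}\subset C_{i,j}$ that define the edges, hence assemble into an isomorphism $\rootbi(\mgraph,[k]) \cong \rootbi(\mgraphp, \beta([k]))$ of bigraded graphs. I expect the main obstacle to be purely organizational — correctly matching the four marked-graph moves to their effect on $\agraph$ and on $\onev$, and checking the $h_V$-preservation in each case — rather than any deep difficulty, since the $h_U$ half is already done in \cite{Nem_On_the, Nem_Lattice_cohomology} and Lemma \ref{lem:beta1 and beta-1 induce isos on graded roots}.
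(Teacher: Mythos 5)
Your overall strategy (produce explicit lattice maps $\beta_\pm$ that preserve both $h_U$ and $h_V$ and then push everything through $\pi_0$) is sound and is essentially how one would reduce this to the single-graded case; note the paper itself does not reprove this theorem but cites \cite{Niemi-Colvin}, while Lemmas \ref{lem:beta1 and beta-1 induce isos on graded roots} and \ref{lem:ambient graded root is invariant} supply the $h_U$ half. However, there is a genuine error in your $h_V$ step: you claim that for all four moves the new vertex is not adjacent to $v_0$, so that $\onev' = (\onev,0)$ and $\beta_\pm(K+2\onev) = \beta_\pm(K) + 2\onev'$. This is false for the \hyperlink{A0}{(A0)} and \hyperlink{B0}{(B0)} moves. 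For \hyperlink{A0}{(A0)} the new vertex is inserted on the edge joining $v_0$ to $v_1$, so it \emph{is} adjacent to $v_0$ and $v_1$ ceases to be, giving $\onev' = (\onev,0)+(-1,0,\ldots,0,1)$; for \hyperlink{B0}{(B0)} the new $(-1)$-vertex is attached to $v_0$ itself, giving $\onev' = (\onev,1)$. (Both formulas are recorded in the proof of Theorem \ref{thm:invariance}.) With the correct $\onev'$ one finds in both cases that
$\beta_+(K) + 2\onev' = \beta_+(K+2\onev) - 2M'e_{s+1}$, and a direct computation gives $h_U(L - 2M'e_{s+1}) = h_U(L) - 2$ when $L_{s+1}=1$; hence $h_V(\beta_+(K)) = h_V(K) - 2$, so $\beta_+$ does \emph{not} carry $\superlevel_{i,j}(\mgraph,[k])$ into $\superlevel_{i,j}(\mgraphp,[k'])$ and your argument breaks for these two moves.

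The gap is repairable but requires a genuinely different identity from the one you assert: in both the \hyperlink{A0}{(A0)} and \hyperlink{B0}{(B0)} cases one has the cross-relation $\beta_-(K) + 2\onev' = \beta_+(K+2\onev)$, which combined with $h_U\circ\beta_+ = h_U$ gives $h_V(\beta_-(K)) = h_V(K)$. So for these two moves you must single out $\beta_-$ as the map of double superlevel sets (and then rerun the surjectivity argument for $\pi_0(\superlevelone_{i,j})$ with that choice, which no longer reduces to "appending the same entries to both sides"). As written, your proof treats the four moves uniformly on the strength of a transformation law for $\onev$ that holds only for \hyperlink{A}{(A)} and \hyperlink{B}{(B)}, so the cases that are actually new for marked graphs are exactly the ones left unproved.
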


\subsection{Surgery formula for (bi)graded roots}
\label{sec:surgery formula for bigradd roots}
In this section, we explain how to determine the graded root of a surgered manifold from the bigraded root of a plumbed knot complement. 
We note that Ozsv{\'a}th-Stipsicz-Szab{\'o} \cite{OSS} established a surgery formula using the algebraic rather than superlevel set approach to knot lattice homology, the latter of which was introduced in \cite{Niemi-Colvin}. 
The results in this section may be viewed as a degree zero analogue of their surgery formula for the superlevel set approach to (knot) lattice homology. 
 
Let $\mgraph$ be a negative definite marked plumbing graph, with ambient manifold $\amfld$ and plumbed knot complement $\mmfld$. Pick a framing $m_0$ on $v_0$ such that the surgered plumbing graph $\sgraph$ is negative definite. 

For $L\in \Char(\sgraph)$, define the Alexander grading\footnote{Technically, Alexander grading is defined for $K \in \Char(\Gamma)$, so here we are abusing the terminology and call $a(L) = \frac{L(\Sigma)+\Sigma^2}{2}$ the Alexander grading of $L \in \Char(\sgraph)$.} of $L$ to be 
\begin{equation}
    a(L) := \frac{L(\Sigma)+\Sigma^2}{2},
\end{equation}
where $\Sigma$ is as in \eqref{eq:Sigma}. 
Given a $\spinc$ structure $\mathfrak{t} \in \spinc(\sgraph)$, set 
\[
\Char(\sgraph, \mathfrak{t}) := \{ L \in \Char(\sgraph) \mid [L] = \mathfrak{t} \},
\]
and let 
\[
\coset(\mathfrak{t}) := \left\{a(L) \;\vert\; L \in \Char(\sgraph, \mathfrak{t})\right\} \subset \mathbb{Q}.
\]
We observe that 
 \begin{equation}
 \label{eq:edges in a slice}
            a(L + 2\smatrix e_i) =
            \begin{cases}
             a(L) + \Sigma^2  &  \text{ if } i=0,\\
              a(L) & \text{ if } 1\leq i \leq s.
            \end{cases}
\end{equation}        
It follows that $\coset(\mathfrak{t}) = a(L) + \Sigma^2 \Z$
for any choice of $L \in \Char(\sgraph, \mathfrak{t})$. 
For $a\in \coset(\mathfrak{t})$, we set $\mathfrak{t}_a \in \spinc(\agraph)$ to be the $\spinc$ structure represented by $L\vert_{\agraph}$, for any  $L\in \Char(\sgraph, \mathfrak{t})$ with $a(L) = a$. 
Lemma \ref{lem:restricting spinc structure surgery} shows that $\mathfrak{t}_a$ is independent of the choice of $L$. Equation \eqref{eq:edges in a slice}, together with $\smatrix e_0 = (m_0, \lambda)$, implies that
\[
\mathfrak{t}_{a+\Sigma^2} = [2\onev]\cdot \mathfrak{t}_a, 
\]
where $\cdot$ denotes the homology action 
\eqref{eq:conj and H_1 for marked graph}. 

For $a, h \in \Q$, we  define 
\begin{align}
\begin{aligned}
\label{eq:h[a]}
\shift(a) &:= -\frac{1}{\Sigma^2}\left(a-\frac{\Sigma^2}{2}\right)^2 -\frac{1}{4}, \\
h[a] &:= h + \shift(a). 
\end{aligned}
\end{align}
The rest of this subsection is dedicated to the proof of the following proposition.

\begin{prop}
\label{prop:bigraded root algorithm}
The graded root of $(\sgraph,\mathfrak{t})$ is determined by the bigraded roots of $\mgraph$, according to the following algorithm:
\begin{enumerate}
    \item 
    Consider the graded graph $\bigsqcup\limits_{a \in \coset(\mathfrak{t})}\root(\agraph, \mathfrak{t}_a)\{-\shift(a)\}$, where $\{d\}$ denotes up upwards grading shift by $d$. 
    \item For each pair of nodes $\eta_1$ of $\root(\agraph, \mathfrak{t}_a)\{-\shift(a)\}$ and $\eta_2$ of $\root(\agraph, \mathfrak{t}_{a+\Sigma^2})\{-\shift(a+\Sigma^2)\}$ which are in the same grading, we identify $\eta_1$ and $\eta_2$ if there is a node in the bigraded root $\rootbi(\mgraph, \mathfrak{t}_{a})$ with  coordinate $(\eta_1, \eta_2)$ (see Definition \ref{def:coordinates}). 
    After all of these identifications, we remove multiple edges connecting the same pair of vertices. 
\end{enumerate}
\end{prop}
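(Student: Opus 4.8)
The plan is to work directly with the superlevel set model, reducing the surgered graded root $\root(\sgraph,\mathfrak{t})$ to data that lives on the lattice $\Char(\agraph)$ via the projection $L\mapsto L\vert_{\agraph}$. First I would fix $\mathfrak{t}\in\spinc(\sgraph)$ and analyze the restriction map on characteristic vectors. Since $\smatrix e_0 = (m_0,\lambda)$ and $2\smatrix e_i$ for $1\le i\le s$ projects to $2\amatrix e_i$, the fiber of $L\mapsto L\vert_{\agraph}$ over a point of $\Char(\agraph)$ lying in $\mathfrak{t}_a$ is a single coset; this is the content of the promised Lemma \ref{lem:restricting spinc structure surgery}, and it shows that $\Char(\sgraph,\mathfrak{t})$ decomposes as a disjoint union, indexed by $a\in\coset(\mathfrak{t}) = a(L)+\Sigma^2\Z$, of copies of the lattices $[{\mathfrak t}_a]\subset\Char(\agraph)$. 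The key computation here is the standard relation expressing $h_U^{\sgraph}(L)$ in terms of $h_U^{\agraph}(L\vert_{\agraph})$ and the Alexander grading: completing the square in the $v_0$-direction, one gets
\begin{equation*}
h_U^{\sgraph}(L) = h_U^{\agraph}(L\vert_{\agraph}) + \shift(a(L)),
\end{equation*}
which is exactly why the grading shift $\{-\shift(a)\}$ appears in step (1). (The analogous statement with $h_V$ in place of $h_U$, using $h_V(K)=h_U(K+2\lambda)$, is what links consecutive slices $a$ and $a+\Sigma^2$.) I would verify this identity cleanly using \eqref{eq:inverse of e_0}, \eqref{eq:Sigma}, and \eqref{eq:Sigma^2}.

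Next I would compare the $1$-skeleta. A superlevel set $\superlevelone_h(\sgraph,\mathfrak{t})$ has $0$-cells all $L$ with $h_U^{\sgraph}(L)\ge h$, and edges for $L-L' = \pm 2\smatrix e_i$, $0\le i\le s$. Partitioning the $0$-cells by Alexander grading $a=a(L)$, the edges with $1\le i\le s$ stay within a single Alexander slice and, under the shift identity above, the slice at level $a$ is exactly $\superlevel^U_{h-\shift(a)}(\mgraph,\mathfrak{t}_a)$ with its usual $1$-skeleton structure — and by \eqref{eq:edges in a slice} the condition ``$h_U^{\sgraph}(L)\ge h$'' on that slice is the same as the $U$-superlevel condition on $\root(\agraph,\mathfrak{t}_a)$. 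That accounts for step (1): before gluing, $\bigsqcup_a \superlevelone^U_{h-\shift(a)}(\mgraph,\mathfrak{t}_a)$ is the disjoint union of the Alexander slices. The remaining edges, those with $i=0$, connect the slice at $a$ to the slice at $a\pm\Sigma^2$: an edge $L \leftrightarrow L+2\smatrix e_0$ joins $L\vert_{\agraph}$ (sitting in slice $a$) to $(L+2\smatrix e_0)\vert_{\agraph} = L\vert_{\agraph}+2\lambda$ (sitting in slice $a+\Sigma^2$). Now the point is that $h_V(K) = h_U(K+2\lambda) \ge h'$ is precisely the $V$-superlevel condition, so a $0$-cell $L\vert_{\agraph}=K$ in slice $a$ is the endpoint of such an $i=0$ edge landing inside $\superlevelone_h(\sgraph,\mathfrak{t})$ exactly when $K$ lies in $\superlevelone^U_{h-\shift(a)}\cap\superlevelone^V_{h-\shift(a+\Sigma^2)}(\mgraph,\mathfrak{t}_a) = \superlevelone_{i,j}(\mgraph,\mathfrak{t}_a)$ for the appropriate $(i,j)$. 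Hence the presence of an $i=0$ edge between the two slices at a given level corresponds exactly to the existence of a node of the bigraded root $\rootbi(\mgraph,\mathfrak{t}_a)$ whose coordinate is the relevant pair $(\eta_1,\eta_2)$ — which is step (2), including the instruction to delete resulting multiple edges (several $0$-cells of a slice can lie under the same pair of components).

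Finally I would translate the statement about $1$-skeleta and their connected components into the statement about graded roots: taking $\pi_0$ of $\superlevelone_h(\sgraph,\mathfrak{t})$ over all $h$, and using that $\pi_0$ of a union of spaces glued along identified pieces is the pushout, the components of $\superlevelone_h(\sgraph,\mathfrak{t})$ are obtained from the disjoint union of the components of the shifted slices $\superlevelone^U_{h-\shift(a)}(\mgraph,\mathfrak{t}_a)$ by merging any two that are connected by an $i=0$ edge — which is exactly the node-identification rule of step (2). The edge relations of $\root(\sgraph,\mathfrak{t})$ (drop $h$ by $2$, inclusion of components) then match those on the right-hand side by construction, since the shifts $\shift(a)$ are constant on each slice. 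The main obstacle I expect is bookkeeping: carefully matching the grading conventions (downward-pointing roots, the constant $s$ versus $s+1$ in $h_U$, and the upward shift $\{-\shift(a)\}$) so that the level $h$ on the $\sgraph$ side corresponds to the same integer grading on both coordinates $\eta_1,\eta_2$, and checking that $\shift(a)$ and $\shift(a+\Sigma^2)$ differ by the right amount so that the two slices being glued are genuinely ``in the same grading.'' A secondary subtlety is confirming that no spurious identifications or edges are introduced — i.e., that the bigraded-root coordinate condition is not just necessary but sufficient for an $i=0$ edge — which is where Definition \ref{def:coordinates} and the collapsing procedure of Remark \ref{rem:recovering graded root from bigraded root} do the work.
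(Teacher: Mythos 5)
Your proposal follows essentially the same route as the paper: slice $\Char(\sgraph,\mathfrak{t})$ by Alexander grading, identify each slice with a shifted superlevel set of $\agraph$ via $L\mapsto L\vert_{\agraph}$ (this is the content of the paper's Lemma \ref{lem:L^2 and K^2} and Proposition \ref{prop:slice is isomorphic to superlevel of ambient mfld}), and characterize the $2\smatrix e_0$-direction edges by the simultaneous conditions $h_U(K)\geq h[a]$ and $h_V(K)\geq h[a+\Sigma^2]$, which is exactly what the bigraded root records. The only slip is a sign in your displayed identity: it should read $h_U(L\vert_{\agraph}) = h_U(L) + \shift(a(L))$, so the slice at level $a$ is $\superlevelone{}^U_{h+\shift(a)}(\mgraph,\mathfrak{t}_a)=\superlevelone{}^U_{h[a]}(\mgraph,\mathfrak{t}_a)$ rather than $\superlevelone{}^U_{h-\shift(a)}$, and it is this corrected version that produces the upward shift $\{-\shift(a)\}$ you correctly assert in step (1).
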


Examples \ref{ex:-1 surgery on trefoil bigraded root} and \ref{ex:-2 surgery on trefoil bigraded root} demonstrate the above algorithm. 
In Theorem \ref{thm:p surgery} we extend the surgery formula to weighted graded roots.

For a $\spinc$ structure $\mathfrak{t} \in \spinc(\sgraph)$, we partition $\superlevel_h(\sgraph,\mathfrak{t})$ according to Alexander grading,
\[
\superlevel_h(\sgraph,\mathfrak{t}) = \bigsqcup_{a \in \coset(\mathfrak{t})} \superlevel_{h}^a(\sgraph,\mathfrak{t}),
\]
where
\[
\superlevel_{h}^a(\sgraph,\mathfrak{t}) :=
\left\{
L \in \Char(\sgraph,\mathfrak{t}) \mid
\frac{L^2+(s+1)}{4} \geq h,\; a(L) = a
\right\}.
\]

Each $\superlevel_{h}^a(\sgraph,\mathfrak{t})$ forms the $0$-cells of a $1$-dimensional CW complex, denoted $\superlevelone{}^a_h(\sgraph,\mathfrak{t})$. Two vertices $L, L' \in \superlevelone{}^a_h(\sgraph,\mathfrak{t})$ are connected by an edge if $L-L' = \pm 2 \smatrix e_i$ for some $1\leq i \leq s$ (note  that \eqref{eq:edges in a slice} prevents edges in the $2 \smatrix e_0$ direction).
We would like to express each  $\superlevelone{}^a_h(\sgraph,\mathfrak{t})$ in terms of superlevel sets of $\Gamma$. 
This is accomplished in Proposition \ref{prop:slice is isomorphic to superlevel of ambient mfld} below. 
For this purpose, the following lemma expressing $L^2$ in terms of $L\vert_\Gamma$ is useful. Recall that $v^*\in H^2(X;\Z)$ denotes the image of the Poincar\'{e} dual of $v$ under the map $H^2(X, \partial X;\Z)\to H^2(X;\Z)$.

\begin{lem}
\label{lem:L^2 and K^2}
For any $L \in \Char(\sgraph,\mathfrak{t})$ with $ a(L)= a$, 
\[
L^2 = K^2 + \frac{1}{\Sigma^2}(\Sigma^2-2a)^2,
\]
where $K = L\vert_\Gamma$. 
\end{lem}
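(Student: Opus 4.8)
The plan is to compute $L^2 = \hdual{L} \smatrix^{-1} L$ directly by decomposing $L$ along the direction $e_0$ (equivalently, along $\Sigma$) and its $\smatrix$-orthogonal complement, which is spanned by the image of $\Char(\agraph)$ under the inclusion of the ambient lattice. First I would observe that $\Sigma$ as defined in \eqref{eq:Sigma} satisfies $\hdual{e_i}\smatrix\Sigma = 0$ for $1 \le i \le s$, so $\Sigma$ spans the $\smatrix$-orthogonal complement of the sublattice $0 \times \Z^s \subset \Z^{s+1}$. Thus there is an $\smatrix$-orthogonal splitting $\Q^{s+1} = \Q\Sigma \oplus (0\times \Q^s)$, and I would write $L = c\,\smatrix\Sigma + (0, w)$ for appropriate $c \in \Q$ and $w \in \Q^s$ — wait, more naturally, since $\smatrix\Sigma$ is the image under $\smatrix$ of $\Sigma$, and the orthogonality is with respect to the form, the clean splitting is to write any vector as a $\Sigma$-component plus something in the span of $e_1, \dots, e_s$: $L = a_L \Sigma + (0, u_L)$ won't be quite right either since $\Sigma$ has a nonzero $v_0$-entry; let me just say $L = t\Sigma + y$ where $y \in 0\times\Q^s$, determined by matching the $v_0$-entry (giving $t = L_0 = -m_0$ using $L_K = (-m_0, K)$) and then $y = L - (-m_0)\Sigma$.

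The key computation is then $L^2 = \hdual{L}\smatrix^{-1}L$. Using $L = (-m_0)\Sigma + y$ with $y \in 0 \times \Q^s$, and the fact that $\Sigma$ is $\smatrix$-orthogonal to $0\times\Q^s$, I get $L^2 = m_0^2 \Sigma^2 + \hdual{y}\smatrix^{-1}y$ — except I need to be careful: $\hdual{\Sigma}\smatrix^{-1}y$ is not the pairing $\aform{\Sigma}{y}$; the pairing is $\hdual{\Sigma}\smatrix y$. Since I want $L^2 = \hdual{L}\smatrix^{-1}L$ and not the intersection pairing, I should instead decompose $L$ in the basis adapted to $\smatrix^{-1}$, i.e.\ write $L = \alpha\, e_0 + (0, K)$ with $\alpha = -m_0$ and $K = L\vert_\agraph$, and expand $\hdual{L}\smatrix^{-1}L = \alpha^2\, \hdual{e_0}\smatrix^{-1}e_0 + 2\alpha\, \hdual{e_0}\smatrix^{-1}(0,K) + \hdual{(0,K)}\smatrix^{-1}(0,K)$. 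The first term is $m_0^2/(\hdual{e_0}\smatrix^{-1}e_0) \cdot (\hdual{e_0}\smatrix^{-1}e_0)^2 \cdot \dots$ — more simply, $\hdual{e_0}\smatrix^{-1}e_0 = 1/\Sigma^2$ by \eqref{eq:Sigma^2}, so the first term is $m_0^2/\Sigma^2$. For the cross term and the last term I would use that $\smatrix^{-1}(0,K)$ decomposes as a multiple of $\Sigma$ plus a vector whose $\agraph$-part is $\amatrix^{-1}K$, exploiting the block structure of $\smatrix$; a short linear-algebra lemma (or direct use of \eqref{eq:inverse of e_0}) gives $\hdual{e_0}\smatrix^{-1}(0,K) = -\hdual{\onev}\amatrix^{-1}K / \Sigma^2$ and $\hdual{(0,K)}\smatrix^{-1}(0,K) = \hdual{K}\amatrix^{-1}K + (\hdual{\onev}\amatrix^{-1}K)^2/\Sigma^2$.

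Assembling, $L^2 = K^2 + \frac{1}{\Sigma^2}\bigl(m_0^2 - 2m_0\hdual{\onev}\amatrix^{-1}K + (\hdual{\onev}\amatrix^{-1}K)^2\bigr) = K^2 + \frac{1}{\Sigma^2}(m_0 - \hdual{\onev}\amatrix^{-1}K)^2$. Finally I would rewrite the quantity $m_0 - \hdual{\onev}\amatrix^{-1}K$ in terms of the Alexander grading: from $a(L) = \frac{1}{2}(L_K(\Sigma) + \Sigma^2)$ with $L_K = (-m_0, K)$ and $\Sigma = (1, -\amatrix^{-1}\onev)$, we get $L_K(\Sigma) = -m_0 - \hdual{K}\amatrix^{-1}\onev = -(m_0 - \hdual{\onev}\amatrix^{-1}K)$ (using symmetry of $\amatrix^{-1}$), hence $m_0 - \hdual{\onev}\amatrix^{-1}K = -L_K(\Sigma) = \Sigma^2 - 2a$. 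Substituting yields $L^2 = K^2 + \frac{1}{\Sigma^2}(\Sigma^2 - 2a)^2$, as claimed.

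I expect the main obstacle to be bookkeeping with the block inverse: carefully justifying the identities for $\hdual{e_0}\smatrix^{-1}(0,K)$ and $\hdual{(0,K)}\smatrix^{-1}(0,K)$ from the block structure $\smatrix = \begin{pmatrix} m_0 & \hdual{\onev} \\ \onev & \amatrix\end{pmatrix}$. The cleanest route is probably to verify the claimed formula $\smatrix^{-1} = \frac{1}{\Sigma^2}\SmallMatrix{1 \\ -\amatrix^{-1}\onev}\SmallMatrix{1 & -\hdual{\onev}\amatrix^{-1}} + \SmallMatrix{0 & 0 \\ 0 & \amatrix^{-1}}$ by multiplying by $\smatrix$ and using $\Sigma^2 = m_0 - \hdual{\onev}\amatrix^{-1}\onev$; once this explicit inverse is in hand the three pairings above are immediate, and everything else is substitution. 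Alternatively, one can avoid the explicit inverse entirely by the orthogonal-decomposition argument: write $L = -m_0\Sigma + z$ where $z := L + m_0\Sigma$ has $v_0$-entry $-m_0 + m_0 = 0$, note $\smatrix z$ also lies in... this requires $z$ itself to be $\smatrix$-orthogonal to $\Sigma$, which holds iff $\hdual{\Sigma}\smatrix z = 0$, and $\hdual{\Sigma}\smatrix z = \hdual{\Sigma}\smatrix L + m_0\Sigma^2$; since $\hdual{\Sigma}\smatrix = \Sigma^2 \hdual{e_0}$ by definition of $\Sigma$, this equals $\Sigma^2 L_0 + m_0 \Sigma^2 = \Sigma^2(-m_0 + m_0) = 0$. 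Good — so the decomposition $L = -m_0\Sigma + z$ is $\smatrix$-orthogonal, giving $L^2 = m_0^2\Sigma^2 + z^2$ with $z \in 0\times\Q^s$, and $z^2 = \hdual{z}\smatrix^{-1}z = \hdual{(z\vert_\agraph)}\amatrix^{-1}(z\vert_\agraph)$ where $z\vert_\agraph = K + m_0(-\amatrix^{-1}\onev)$; expanding this and simplifying with $\Sigma^2 = m_0 - \hdual{\onev}\amatrix^{-1}\onev$ and $\Sigma^2 - 2a = m_0 - \hdual{\onev}\amatrix^{-1}K$ recovers the formula. I would present the orthogonal-decomposition version as the main argument since it is shortest.
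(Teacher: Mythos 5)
Your overall strategy --- invert $\smatrix$ blockwise, expand $\hdual{L}\smatrix^{-1}L$, complete the square, and rewrite the completed square via the Alexander grading --- is the same as the paper's, and your block-inverse identities $\hdual{e_0}\smatrix^{-1}e_0 = 1/\Sigma^2$, $\hdual{e_0}\smatrix^{-1}(0,K) = -\hdual{\onev}\amatrix^{-1}K/\Sigma^2$, and $\hdual{(0,K)}\smatrix^{-1}(0,K) = \hdual{K}\amatrix^{-1}K + (\hdual{\onev}\amatrix^{-1}K)^2/\Sigma^2$ are all correct. The genuine gap is that you set the $v_0$-entry of $L$ equal to $-m_0$, writing $L = -m_0\,e_0 + (0,K)$ and later $L = -m_0\Sigma + z$. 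That is true only for the auxiliary vector $L_K = (-m_0, K)$ used in \cite[Definition 3.2]{OSS} to define the Alexander grading of $K\in\Char(\agraph)$; the lemma concerns an arbitrary $L\in\Char(\sgraph,\mathfrak{t})$ with $a(L)=a$, whose $v_0$-entry is $L_0 = 2(a-A(K)) - m_0$ and equals $-m_0$ only when $a = A(K)$. Your conversion to the Alexander grading likewise uses $L_K(\Sigma)$ rather than $L(\Sigma)$, so as written you prove the formula only for $L=L_K$. (There are also two sign slips --- the cross term is $+2m_0\hdual{\onev}\amatrix^{-1}K/\Sigma^2$, and $-L_K(\Sigma) = m_0 + \hdual{\onev}\amatrix^{-1}K$ --- which happen to cancel in your final expression.) The fix is mechanical and recovers the paper's proof: keep the coefficient $L_0$ throughout to get $L^2 = K^2 + \tfrac{1}{\Sigma^2}(L_0 - \hdual{\onev}\amatrix^{-1}K)^2$, then substitute $L_0 - \hdual{K}\amatrix^{-1}\onev = L(\Sigma) = 2a - \Sigma^2$.

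The ``orthogonal decomposition of $L$'' you propose as the main argument has a second, structural defect. You decompose $L = c\Sigma + z$ with $\hdual{\Sigma}\smatrix z = 0$, but $L^2$ is the pairing $\hdual{L}\smatrix^{-1}L$, and $\smatrix$-orthogonality does not kill the cross term $\hdual{\Sigma}\smatrix^{-1}z$; moreover $\hdual{\Sigma}\smatrix^{-1}\Sigma \neq \Sigma^2$ and $\hdual{z}\smatrix^{-1}z \neq \hdual{(z\vert_\agraph)}\amatrix^{-1}(z\vert_\agraph)$ in general (the block inverse contributes an extra $(\hdual{\onev}\amatrix^{-1}(z\vert_\agraph))^2/\Sigma^2$). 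To make this route work one must decompose the Poincar\'e dual $x = \smatrix^{-1}L = x_0\Sigma + w$ with $w\in 0\times\Q^s$: then $L^2 = \hdual{x}\smatrix x = x_0^2\Sigma^2 + \hdual{w}\smatrix w$ does split, with $x_0 = L(\Sigma)/\Sigma^2 = (2a-\Sigma^2)/\Sigma^2$ and $\hdual{w}\smatrix w = K^2$. I would present the corrected block-inverse computation, which is what the paper does.
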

\begin{proof}
For any $1\leq i \leq s$, let $i'$ denote the unique vertex adjacent to $v_0$ that is in the same connected component as $v_i$ in $\Gamma$. 
As a consequence of equations \eqref{eq:inverse of e_0} and \eqref{eq:Sigma^2},
\[
(M_{v_0,m_0}^{-1})_{00} = \frac{1}{\Sigma^2}, \hskip1em (M_{v_0,m_0}^{-1})_{i0} = -\frac{1}{\Sigma^2}(M^{-1})_{ii'},
\]
\[
(M_{v_0,m_0}^{-1})_{ij} = (M^{-1})_{ij} + \frac{1}{\Sigma^2}(M^{-1})_{ii'}(M^{-1})_{jj'}
\]
for any $1\leq i,j\leq s$. Therefore, we can express $L^2$ in terms of $K$ in the following way: 
\begin{align*}
    L^2 &= \sum_{0\leq i,j\leq s}L(v_i)L(v_j) v_i^*\cdot v_j^*\\
    &= \sum_{1\leq i,j\leq s}K(v_i)K(v_j) v_i^*\cdot v_j^* + 2L(v_0)\sum_{1\leq i\leq s}K(v_i) v_0^* \cdot v_i^* + L(v_0)^2 (v_0^*)^2\\
    &= K^2 + \frac{1}{\Sigma^2}\left( \sum_{1\leq i\leq s}K(v_i)(M^{-1})_{ii'} \right)^2 - \frac{2L(v_0)}{\Sigma^2}\left( \sum_{1\leq i\leq s}K(v_i)(M^{-1})_{ii'} \right) + \frac{L(v_0)^2}{\Sigma^2}\\
    &= K^2 + \frac{1}{\Sigma^2}\left( \sum_{1\leq i\leq s}K(v_i)(M^{-1})_{ii'}-L(v_0) \right)^2,
\end{align*}
where $v_i^* \cdot v_j^* = (M_{v_0,m_0}^{-1})_{ij}$ denotes the dual intersection pairing. 
Plugging in 
\[
L(v_0) = 2a - \Sigma^2 - K(\Sigma-v_0) = 2a - \Sigma^2 + K M^{-1}  {\onev}
\]
and
\[
\sum_{1\leq i\leq s}K(v_i)(M^{-1})_{ii'} = K M^{-1}  {\onev}
\]
into the last expression, we get the desired equation:
\[
L^2 = K^2 + \frac{1}{\Sigma^2}(\Sigma^2-2a)^2. 
\]
\end{proof}

To relate $\superlevelone{}^a_h(\sgraph,\mathfrak{t})$, the slice of $\superlevelone_h(\sgraph, \mathfrak{t})$ with fixed Alexander grading $a$, to superlevel sets of $\Gamma$, we need to specify which $\spinc$ structure of $\Gamma$ we are considering. 
Recall that the pair $(\mathfrak{t},a)$ for $a\in \coset(\mathfrak{t})$ determines a  $\spinc$ structure on $\agraph$, denoted by $\mathfrak{t}_a$, which is  represented by $L\vert_\agraph \in \Char(\agraph)$ for a choice of $L\in \Char(\sgraph, \mathfrak{t})$ satisfying $a(L) = a$. 
The following demonstrates that $\mathfrak{t}_a$ is independent of the choice of $L$. 

\begin{lem}[{\cite[Section 5]{OSS}}]
    \label{lem:restricting spinc structure surgery}
    Let $L,L' \in \Char(\mgraph, \mathfrak{t})$ with $a(L) = a(L') = a$. 
    Then $L\vert_{\agraph}$ and  $L'\vert_{\agraph}$ represent the same $\spinc$ structure on the ambient manifold. 
\end{lem}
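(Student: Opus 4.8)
The plan is to show that if $L, L' \in \Char(\sgraph, \mathfrak{t})$ have the same Alexander grading $a(L) = a(L') = a$, then $L\vert_\agraph$ and $L'\vert_\agraph$ differ by an element of $2M\Z^s$, which is exactly the condition for them to represent the same $\spinc$ structure on $\agraph$ under the identification \eqref{eq:spinc closed}. First I would use the fact that $L$ and $L'$ represent the same $\spinc$ structure $\mathfrak{t}$ on $\sgraph$, so $L - L' \in 2\smatrix \Z^{s+1}$, i.e.\ $L - L' = 2\smatrix y$ for some $y = (y_0, y_1, \ldots, y_s) \in \Z^{s+1}$. Restricting to $\agraph$, the projection of $2\smatrix y$ onto the last $s$ coordinates is, using the block form $\smatrix = \SmallMatrix{m_0 & \hdual{\onev} \\ \onev & \amatrix}$, equal to $2(y_0 \onev + \amatrix (y_1, \ldots, y_s)) = 2 y_0 \onev + 2\amatrix \hdual{(y_1, \ldots, y_s)}$. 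So $L\vert_\agraph - L'\vert_\agraph = 2y_0 \onev + 2\amatrix y''$ where $y'' = (y_1, \ldots, y_s)$.

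The key step is then to show $y_0 = 0$, which is where the equal-Alexander-grading hypothesis enters. By \eqref{eq:edges in a slice}, or directly from the definition $a(L) = \tfrac12(\hdual{L}\Sigma + \Sigma^2)$, applying $L \mapsto L + 2\smatrix e_i$ changes $a$ by $\hdual{e_i}\smatrix\Sigma$, which is $\Sigma^2$ when $i = 0$ and $0$ when $1 \leq i \leq s$ (by the defining property $\hdual{e_i}\smatrix\Sigma = 0$ of $\Sigma$). Hence $a(L) - a(L') = \hdual{(L - L')}\Sigma / 1 \cdot \tfrac12 \cdot 2 = \hdual{(2\smatrix y)}\Sigma = 2 \sum_{i} y_i (\hdual{e_i}\smatrix\Sigma) = 2 y_0 \Sigma^2$. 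Since $a(L) = a(L')$ and $\Sigma^2 \neq 0$ (indeed $\Sigma^2 < 0$ as $\smatrix$ is negative definite), we conclude $y_0 = 0$. Therefore $L\vert_\agraph - L'\vert_\agraph = 2\amatrix y'' \in 2\amatrix\Z^s$, so $[L\vert_\agraph] = [L'\vert_\agraph]$ in $\spinc(\agraph)$.

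I do not anticipate a serious obstacle here; the proof is essentially a bookkeeping computation with the block decomposition of $\smatrix$ and the two defining properties of $\Sigma$ (namely $\Sigma_{v_0} = 1$ and $\hdual{e_i}\smatrix\Sigma = 0$ for $i \geq 1$). The one point requiring a little care is correctly tracking how $a$ transforms under the lattice generators of $[L]$ inside $\Char(\sgraph)$ versus the generators $2\amatrix e_i$ inside $\Char(\agraph)$ — in particular remembering that $2\smatrix e_0 = (2m_0, 2\onev)$ restricts to $2\onev$ on $\agraph$, which is $2\lambda$, and is \emph{not} of the form $2\amatrix e_i$; this is precisely why the $y_0 = 0$ argument is needed rather than the restriction map being automatically well-defined on $\spinc$ classes. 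It may also be worth remarking that this recovers the statement of \cite[Section 5]{OSS} in our coordinates, and that the same computation shows $\mathfrak{t}_{a + \Sigma^2} = [2\lambda]\cdot \mathfrak{t}_a$, consistent with the discussion preceding the lemma.
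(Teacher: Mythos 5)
Your proof is correct and is essentially the paper's own argument: write $L-L'=2\smatrix y$, use $a(L)=a(L')$ (equivalently $(L-L')(\Sigma)=0$) together with $\smatrix\Sigma=\Sigma^2 e_0$ and $\Sigma^2\neq 0$ to force $y_0=0$, and conclude that the restrictions differ by $2\amatrix(y\vert_\agraph)$. The only blemish is a harmless factor-of-two slip in your computation of $a(L)-a(L')$ (it equals $y_0\Sigma^2$, not $2y_0\Sigma^2$), which does not affect the conclusion.
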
 

\begin{proof}
   We have $L-L' = 2\smatrix x$ for some $x\in \Z^{s+1}$ and $(L-L')(\Sigma)=0$. 
   This implies that $x_0=0$, so that $L\vert_{\agraph}  - L'\vert_{\agraph}  = 2 \amatrix (x\vert_{\agraph} )$. 
\end{proof}

\begin{prop}
\label{prop:slice is isomorphic to superlevel of ambient mfld}
For any $a\in \coset(\mathfrak{t})$, there is an isomorphism of $1$-dimensional CW complexes
\begin{align*}
\superlevelone{}^a_h(\sgraph,\mathfrak{t}) \cong
\superlevelone_{h[a]}(\Gamma,\mathfrak{t}_a)
\end{align*}
given on $0$-cells by $L\mapsto L\vert_{\Gamma}$, where $h[a]$ is as defined in \eqref{eq:h[a]}. 
It follows that the connected components of $\superlevelone{}^a_h(\sgraph,\mathfrak{t})$  correspond to the nodes of the graded root $\root(\Gamma,\mathfrak{t}_a)$ at height $h[a]$.
\end{prop}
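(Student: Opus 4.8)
The plan is to establish the claimed isomorphism directly on $0$-cells via the restriction map $L \mapsto L\vert_\Gamma$, and then to check that it is a bijection which is compatible with the two CW-complex structures.

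\textbf{Setting up the bijection on $0$-cells.} First I would fix a choice $L_0 \in \Char(\sgraph,\mathfrak{t})$ with $a(L_0) = a$; its restriction $L_0\vert_\Gamma$ represents $\mathfrak{t}_a$, and by Lemma \ref{lem:restricting spinc structure surgery} the $\spinc$ structure $\mathfrak{t}_a$ does not depend on this choice. Given any other $L \in \Char(\sgraph,\mathfrak{t})$ with $a(L) = a$, the same argument as in the proof of Lemma \ref{lem:restricting spinc structure surgery} shows that $L - L_0 = 2\smatrix x$ with $x_0 = 0$, whence $L\vert_\Gamma - L_0\vert_\Gamma = 2M(x\vert_\Gamma)$, so $L\vert_\Gamma \in [\mathfrak{t}_a] = \Char(\Gamma,\mathfrak{t}_a)$; this shows the restriction map lands in the correct lattice. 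Conversely, given $K \in \Char(\Gamma,\mathfrak{t}_a)$, I would reconstruct $L$ by specifying its $v_0$-entry using the formula $L(v_0) = 2a - \Sigma^2 + KM^{-1}\onev$ that already appears in the proof of Lemma \ref{lem:L^2 and K^2}; one checks this $L := (L(v_0), K)$ satisfies $a(L) = a$ (essentially by definition of $a$, since $L(\Sigma) = L(v_0) - KM^{-1}\onev$) and lies in $\Char(\sgraph,\mathfrak{t})$ (characteristic because $K$ is and $m_0 \equiv L(v_0) \bmod 2$, which follows from parity of the Alexander grading relation — this is the one spot to be a little careful). This gives a two-sided inverse, so $L \mapsto L\vert_\Gamma$ is a bijection $\superlevel_h^a(\sgraph,\mathfrak{t}) \leftrightarrow$ (some subset of) $\Char(\Gamma,\mathfrak{t}_a)$.

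\textbf{Matching the superlevel conditions.} Here is where Lemma \ref{lem:L^2 and K^2} does the work: for $L$ with $a(L) = a$ and $K = L\vert_\Gamma$,
\[
\frac{L^2 + (s+1)}{4} = \frac{K^2 + s}{4} + \frac{1}{4}\left(1 + \frac{1}{\Sigma^2}(\Sigma^2 - 2a)^2\right) = h_U(K) - \shift(a),
\]
where the last equality is just unwinding the definition $\shift(a) = -\frac{1}{\Sigma^2}(a - \frac{\Sigma^2}{2})^2 - \frac14$ and expanding $(\Sigma^2 - 2a)^2 = 4(a - \frac{\Sigma^2}{2})^2$. Therefore $\frac{L^2+(s+1)}{4} \geq h$ if and only if $h_U(K) \geq h + \shift(a) = h[a]$, so the bijection above restricts to a bijection $\superlevel_h^a(\sgraph,\mathfrak{t}) \leftrightarrow \superlevel_{h[a]}(\Gamma,\mathfrak{t}_a)$ on $0$-cells.

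\textbf{Matching edges.} Finally I would check the edge structures correspond. Two $0$-cells $L, L'$ of $\superlevelone{}^a_h(\sgraph,\mathfrak{t})$ are joined by an edge iff $L - L' = \pm 2\smatrix e_i$ for some $1 \leq i \leq s$; note \eqref{eq:edges in a slice} guarantees any such edge preserves the Alexander grading (and that there are no $i = 0$ edges within a fixed slice). Under restriction, $\smatrix e_i$ for $1 \leq i \leq s$ restricts to $M e_i$ (the $v_0$-row of $\smatrix$ only affects the $v_0$-entry, which we forget), so $L - L' = \pm 2\smatrix e_i$ translates to $K - K' = \pm 2M e_i$, which is exactly the edge relation in $\superlevelone_{h[a]}(\Gamma,\mathfrak{t}_a)$; conversely any such $K,K'$ lift to $L,L'$ differing by $\pm 2\smatrix e_i$ since the reconstruction formula for $L(v_0)$ is $\Z$-affine in $K$ with the right increments. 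This gives the claimed isomorphism of $1$-dimensional CW complexes, and the statement about connected components corresponding to nodes of $\root(\Gamma,\mathfrak{t}_a)$ at height $h[a]$ is then immediate from Definition \ref{def:graded root Gamma}.

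\textbf{Main obstacle.} The only genuinely delicate point is the parity/characteristic-vector bookkeeping in the reconstruction step — verifying that the $L$ built from a given $K \in \Char(\Gamma,\mathfrak{t}_a)$ is actually characteristic for $\sgraph$ and actually represents $\mathfrak{t}$ (not merely that $a(L) = a$ and $L\vert_\Gamma = K$). Everything else is a direct consequence of Lemmas \ref{lem:L^2 and K^2} and \ref{lem:restricting spinc structure surgery} together with \eqref{eq:edges in a slice}; the $q,t$-weighted refinement is deferred to Theorem \ref{thm:p surgery} and is not needed here.
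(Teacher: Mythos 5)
Your proposal is correct and follows essentially the same route as the paper's proof: Lemma \ref{lem:L^2 and K^2} handles the matching of superlevel conditions (via the identity $\frac{L^2+(s+1)}{4} = h_U(L\vert_\Gamma) - \shift(a)$), the $v_0$-entry is reconstructed by the same affine formula, and the parity/characteristic check you flag as the delicate point is resolved exactly as you suggest, by comparing against a reference $L'$ with $a(L')=a$ via Lemma \ref{lem:restricting spinc structure surgery}. Your explicit verification of the edge correspondence is a point the paper leaves implicit, but it is a routine check and your argument for it is sound.
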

\begin{proof}
    Lemma \ref{lem:L^2 and K^2} implies that if $L\in \superlevel_{h}^a(\sgraph,\mathfrak{t})$, then $L\vert_\Gamma \in \superlevel_{h[a]}(\Gamma,\mathfrak{t}_a)$. Given $K\in \superlevel_{h[a]}(\Gamma,\mathfrak{t}_a)$, we will show that there is precisely one $L = (L_0, K) \in  \superlevel_{h}^a(\sgraph,\mathfrak{t})$. Indeed,
    \[
a= \frac{1}{2}(L(\Sigma) + \Sigma^2) = \frac{1}{2}( L_0 + m_0 + 2A(K))
    \]
    forces $L_0 = 2(a-A(K)) - m_0$. We need to verify that this choice of $L$ is in $\superlevel_{h}^a(\sgraph,\mathfrak{t})$. 
    
    First, let $L'\in \superlevel_{h}^a(\sgraph,\mathfrak{t})$ and set $K'= L'\vert_{\agraph} $. Lemma \ref{lem:restricting spinc structure surgery} implies that $K = K' + 2\amatrix x$ for some $x\in \Z^s$. We have 
    \begin{align*}
2(a- A(K)) = L'_0 - 2\hdual{x}\onev + m_0,
    \end{align*}
    which shows $L_0 \equiv m_0 \bmod 2$, so that $L \in \Char(\sgraph)$. Lemma \ref{lem:restricting spinc structure surgery}  gives $[L] = [L'] = \mathfrak{t}$, and Lemma \ref{lem:L^2 and K^2} shows $h_U(L) \geq h$, which completes the proof. 
\end{proof}

\begin{figure}
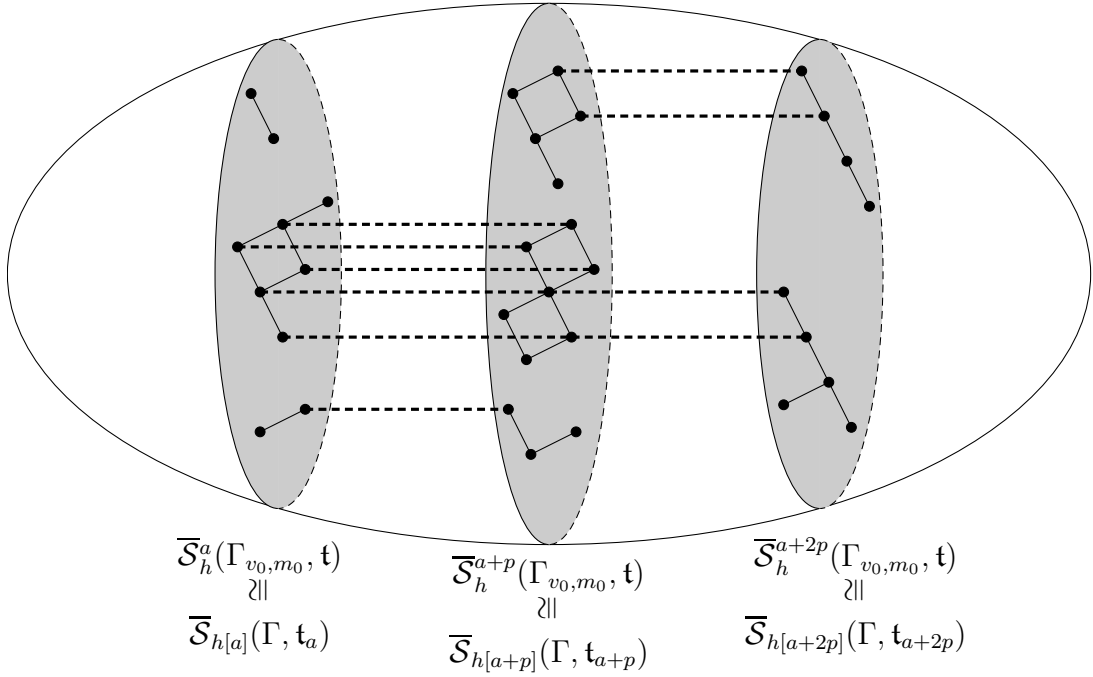

    \centering
    \includestandalone{figures/superlevel_set_surgery}
    \caption{A schematic depiction of slicing the superlevel sets of $\sgraph$ by the Alexander grading, where we have set $p = \Sigma^2$. 
    The dashed horizontal lines represent edges in the $v_0$ direction. Note that $\mathfrak{t}_{a}$ and $\mathfrak{t}_{a+rp}$, for $r\in \Z$, may be different $\spinc$ structures on $\agraph$.} 
    \label{fig:superlevel set surgery}
\end{figure}

\begin{proof}[Proof of Proposition \ref{prop:bigraded root algorithm}]
With Proposition \ref{prop:slice is isomorphic to superlevel of ambient mfld} at hand, together with \eqref{eq:edges in a slice}, we see that in order to determine the connected components of the whole superlevel set $\superlevelone_{h}(\sgraph,\mathfrak{t})$ in terms of superlevel sets of the ambient plumbing, we need to know when there are edges in the $2v_0^*\vert_{\Gamma}=2\onev$ direction.
Equivalently, we need to know when 
\[
K \in \superlevel_{h[a]}(\Gamma,\mathfrak{t}_a)
\]
representing a connected component $C_1$
and
\[
K' \in \superlevel_{h\left[a+\Sigma^2\right]}(\Gamma,\mathfrak{t}_{a+\Sigma^2})
\]
representing a connected component $C_2$
are connected by an edge in the $2v_0^*\vert_{\Gamma}$ direction, so that
\[
K' = K + 2v_0^*\vert_{\Gamma} = K + 2\onev.
\]
See Figure \ref{fig:superlevel set surgery} for a schematic. 

This information is captured exactly by the bigraded root. 
More precisely, such $K$ must satisfy
$h_U(K) \geq h[a]$ and $h_V(K) \geq h[a+\Sigma^2]$, and therefore represents a connected component of $\superlevelone_{h_1, h_2}(\mgraph, \mathfrak{t})$ where $h_1 = h[a]$, $h_2=h[a+\Sigma^2]$. 
Such connected components can be read off from the bigraded root. 
That is, we can check if there is any node of the bigraded root representing a connected component of $\superlevelone_{h_1, h_2}(\mgraph, \mathfrak{t})$ for which its images under inclusion to $\superlevelone{}^U_{h_1}(\mgraph,\mathfrak{t})$ and $\superlevelone{}^V_{h_2}(\mgraph,\mathfrak{t})$ lie in $C_1$ and $C_2 - 2v_0^*\vert_\Gamma$, respectively. 
Thus the bigraded root encodes precisely the information needed to perform surgery and recover the full graded root according to the algorithm in Proposition \ref{prop:bigraded root algorithm}. 
\end{proof}

Below, we illustrate how the surgery of (bi)graded roots work in practice through examples. First, we set $\sf = \hdual{\onev}\amatrix^{-1}\onev\in \Q$. \htarget{sf} If $\knot$ is nullhomologous then $\sf \in \Z$, and moreover from \eqref{eq:inverse of e_0} we see that $\sf$ is the Seifert framing of $\knot$ since $M_{v_0, \sf}$ is not invertible.
We also set 
\[
p := \Sigma^2 = m_0 - \hdual{\onev}\amatrix^{-1}\onev = m_0 - \sf 
\in \Q.
\]
If $\knot$ is nullhomologous, then performing $p$ surgery on $\knot \subset \amfld$ yields precisely $\smfld$. 

\begin{exmp}[$-1$ surgery]
\label{ex:-1 surgery on trefoil bigraded root}
Let $\knot$ be an algebraic knot in $S^3$.
We will describe how to obtain the graded root for the $-1$ surgery (i.e. $\Sigma^2 = -1$) of $\knot$, from the bigraded root of $\knot$. 

The ambient $3$-manifold is $S^3$, and its only graded root is given by an infinite linear graph with one node at each non-positive even degree. 
Take $\mathbb{Z}$ copies of the graded root, and arrange it so that the height of the $a$-th copy is shifted by
\[
h-h[a] = -\shift(a) = \frac{1}{4} + \frac{1}{\Sigma^2}\left(a-\frac{\Sigma^2}{2}\right)^2 = -a(a+1). 
\]
That is, for the $a$-th copy of the graded root, nodes which would normally be called of height $h[a]$ are now placed at height $h$. 
See Figure \ref{fig:-1_surgery}. 
\begin{figure}[h!]
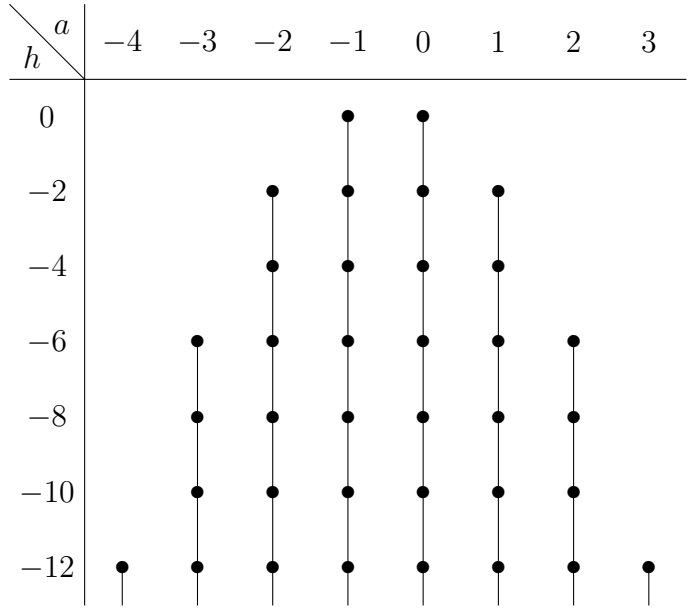

    \centering
    \includestandalone[scale=1]{figures/example_-1_surgery}
    \caption{$\mathbb{Z}$ copies of the graded root arranged for $-1$ surgery}
    \label{fig:-1_surgery}
\end{figure}

The graded root of the surgered manifold will be obtained by collapsing the horizontal $a$-axis. 
That is, for each height $h$, we will replace the nodes at height $h$ with the actual nodes corresponding to the connected components for the surgered manifold. 
For this, we need to know when a node at $(a,h)$ is connected by an edge in $2v_0^*$ direction to a node at $(a-1,h)$. 

To illustrate this, we now specialize to the (right-handed) trefoil knot, whose bigraded root is given in Figure \ref{fig:trefoil_bigraded_root}. 
\begin{figure}
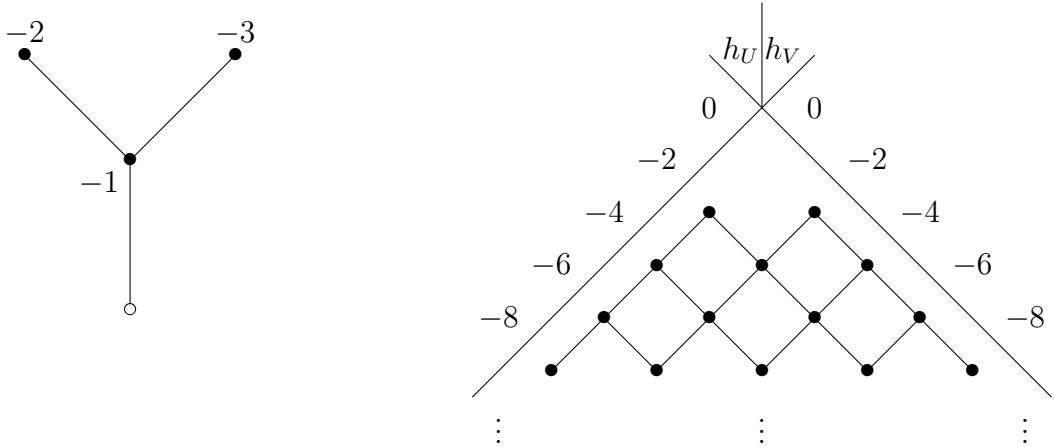

    \centering
    \includestandalone[scale=1]{figures/trefoil_bigraded_root}
    \caption{Left: a plumbing diagram for the trefoil knot. Right: the bigraded root for the trefoil knot. }
    \label{fig:trefoil_bigraded_root}
\end{figure}
As discussed in Definition \ref{def:coordinates}, each node of a bigraded root carries a pair of coordinates valued in the graded root of the ambient $3$-manifold, which in this case, is the same as a pair of non-positive even numbers. 
As drawn in Figure \ref{fig:trefoil_bigraded_root}, the bigraded root of the trefoil has a single node at every coordinate $(h_U,h_V) \in (2\mathbb{Z}_{\leq 0})^{2}$, except for $(0,0)$, where it doesn't have any node. 

From our earlier discussion, the nodes at $(a,h)$ and $(a-1,h)$ in Figure \ref{fig:-1_surgery} are connected by an edge if and only if there is a node in the bigraded root at coordinate $(h[a],h[a-1])$.\footnote{When the ambient $3$-manifold is not $S^3$, we need to check if there is a node in the bigraded root at the coordinate specified by a pair of nodes of the graded root.} 
Since the bigraded root of the trefoil knot has nodes at every coordinate except at $(0,0)$, this means that the only missing edge is between the nodes at $(a,h) = (0,0)$ and $(a,h) = (-1,0)$. 
\begin{figure}
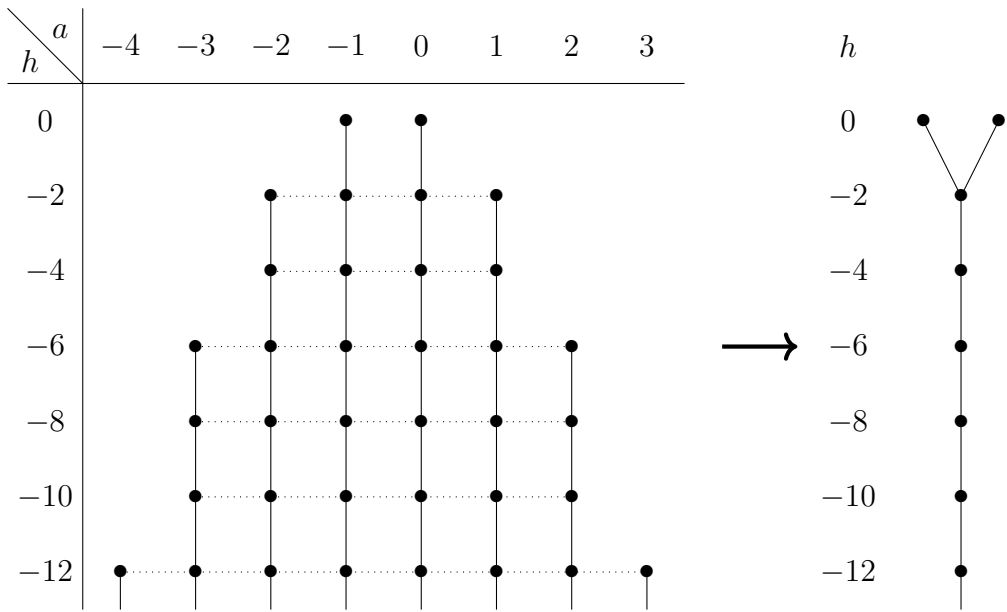

    \centering
    \includestandalone[scale=1]{figures/trefoil_-1_surgery}
    \caption{$-1$ surgery on the trefoil. 
    The dashed horizontal lines indicate which vertices are identified in step (2) of Proposition \ref{prop:bigraded root algorithm}.}
    \label{fig:trefoil_-1_surgery}
\end{figure}
See the left part of Figure \ref{fig:trefoil_-1_surgery}. 
Collapsing the horizontal $a$-coordinate by taking the connected components, we obtain the graded root of $S^{3}_{-1}(\mathbf{3}_1) = \Sigma(2,3,7)$ (the right part of Figure \ref{fig:trefoil_-1_surgery}). 

For an arbitrary algebraic knot in $S^3$, the graded root for the $-1$ surgery can be obtained in the same way; 
the dictionary between the nodes of the bigraded root and edges connecting nodes from different $a$'s in the $-1$ surgery is summarized in Figure \ref{fig:-1_surgery_dictionary}. 
\begin{figure}
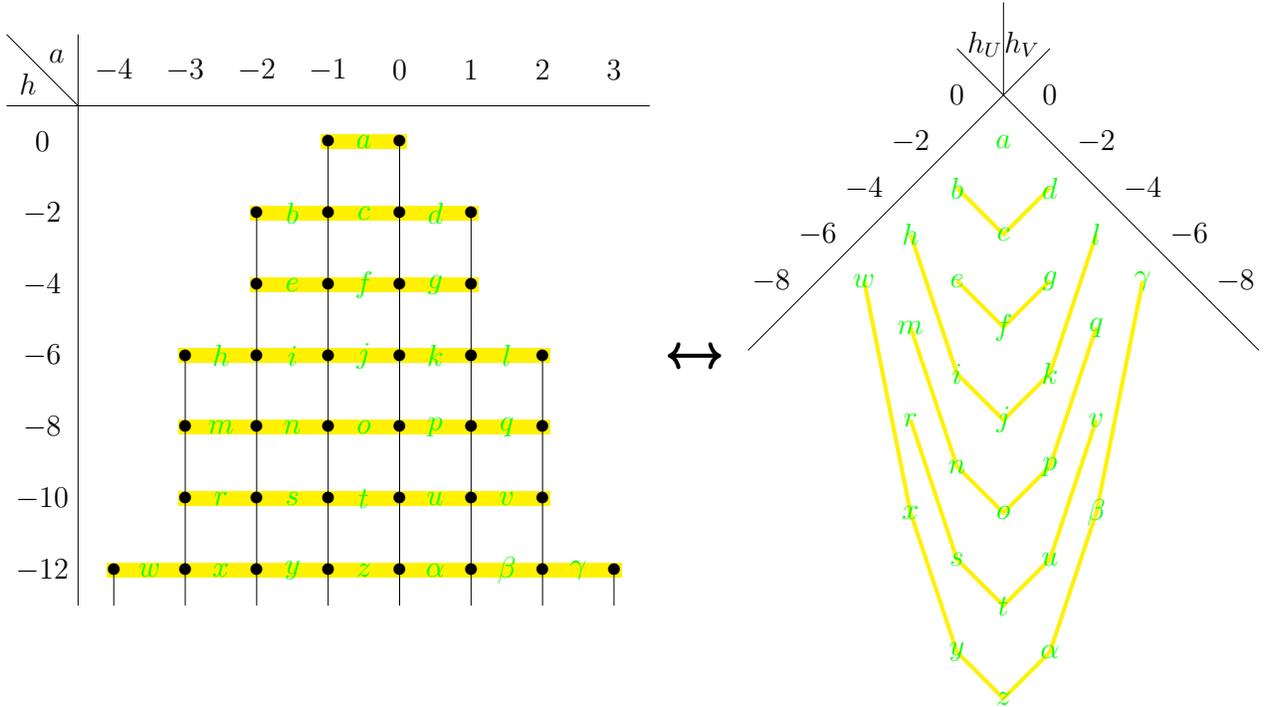

    \centering
    \includestandalone[scale=0.95]{figures/example_-1_surgery_dictionary}
    \caption{$-1$ surgery dictionary}
    \label{fig:-1_surgery_dictionary}
\end{figure}

\end{exmp}

\begin{exmp}[$-2$ surgery]
\label{ex:-2 surgery on trefoil bigraded root}
We illustrate the surgery of (bi)graded roots through one more example: the $-2$ surgery (i.e. $\Sigma^2 = -2$). 
As in the previous example, let $\knot$ be an algebraic knot in $S^3$. We can proceed in the same way. 

Take $\mathbb{Z}$ copies of the graded root, but now $a$ even and $a$ odd represent two different spin$^c$ structures, $\mathfrak{t}_0$ and $\mathfrak{t}_1$, of the surgered manifold. 
For either of the spin$^c$ structures, arrange the copies of the graded root so that the height of the $a$-th copy is shifted by
\[
h-h[a] = -\shift(a) = \frac{1}{4} + \frac{1}{\Sigma^2}\left(a-\frac{\Sigma^2}{2}\right)^2 = -\frac{1}{4}-\frac{1}{2}a(a+2). 
\]
Then, to compute the graded root of the surgered manifold, we need to determine if the nodes at coordinates $(a,h)$ and $(a-2,h)$ are connected by an edge or not. 
As before, this can be directly read off from the bigraded root of the knot by looking at the coordinate $(h[a],h[a-2])$ of the bigraded root and see if there is a node or not. 
We summarize the dictionary in Figure \ref{fig:-2_surgery_dictionary_t0} and \ref{fig:-2_surgery_dictionary_t1} below. 
\begin{figure}
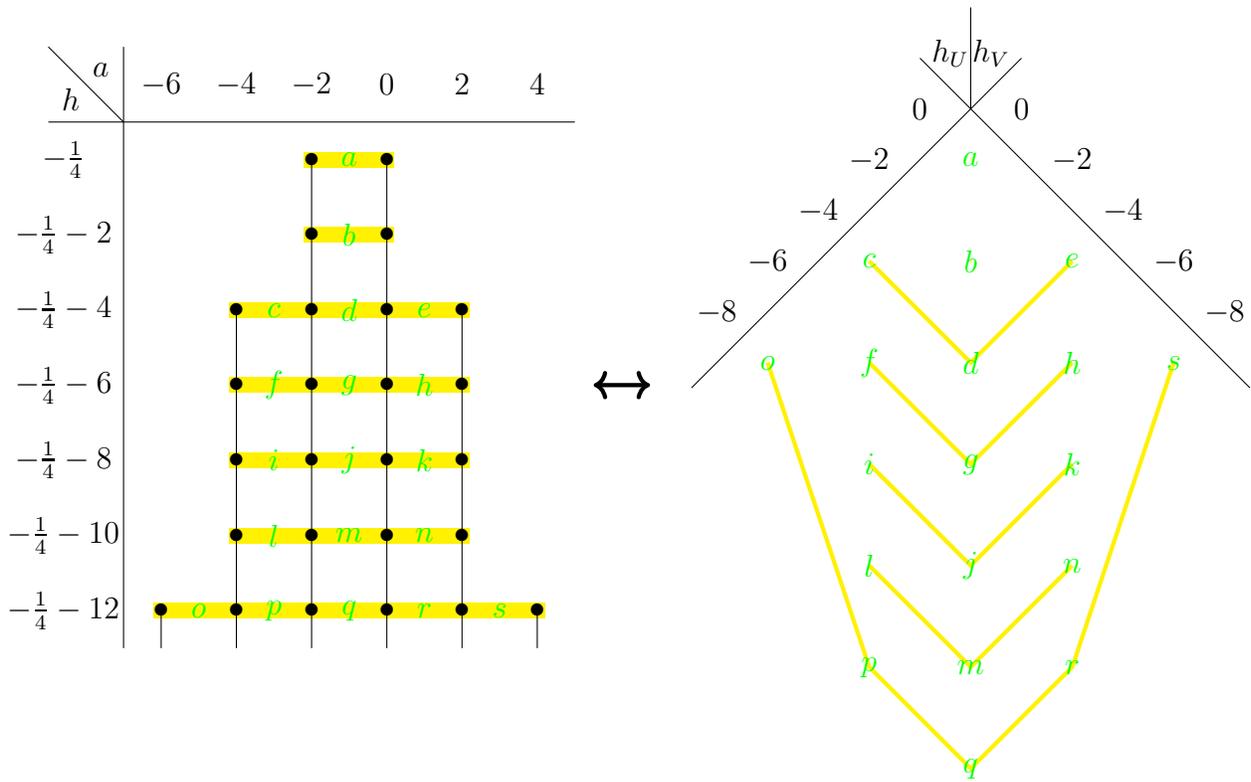

    \centering
    \includestandalone[scale=1]{figures/example_-2_surgery_dictionary_t0}
    \caption{$-2$ surgery dictionary for $\mathfrak{t}_0$}
    \label{fig:-2_surgery_dictionary_t0}
\end{figure}
\begin{figure}
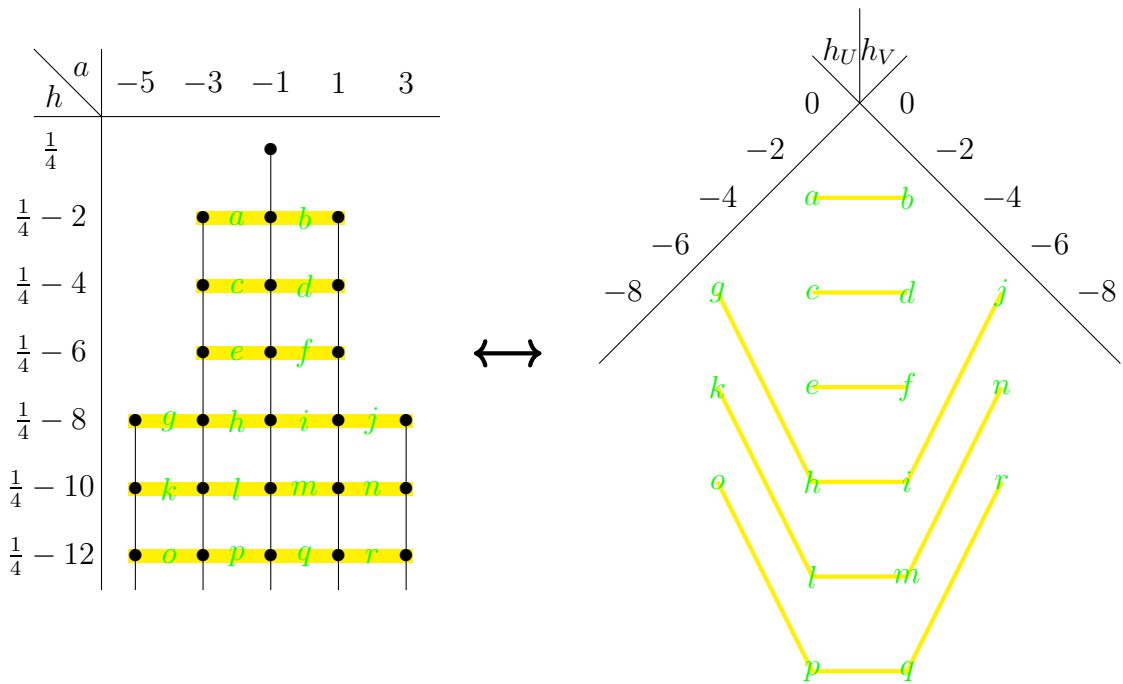

    \centering
    \includestandalone[scale=1]{figures/example_-2_surgery_dictionary_t1}
    \caption{$-2$ surgery dictionary for $\mathfrak{t}_1$}
    \label{fig:-2_surgery_dictionary_t1}
\end{figure}
\end{exmp}

\begin{rem}
The examples given above can be easily generalized to any $-p$ surgery. 
For $\Sigma^2 = -p$ surgery, the $a$-th copy of the graded root is shifted by
\[
h - h[a] = \frac{1}{4} - \frac{1}{p}\left(a+\frac{p}{2}\right)^2. 
\]
The nodes at coordinates $(a,h)$ and $(a-p,h)$ are connected by an edge if and only if there is a node in the bigraded root at $(h_U,h_V) = (h[a],h[a-p])$.

Note that 
\[
\frac{h[a]-h[a-p]}{2} = a,
\]
so if a spin$^c$ structure $\mathfrak{t}$ of the surgered manifold corresponds to $a$'s with $a \equiv b \bmod p$, then only the nodes in the bigraded root whose Alexander grading $A = \dfrac{h_U - h_V}{2}$ is $b \bmod p$ are used in the surgery. 
\end{rem}

\subsection{BPS $q$-series for plumbed knot complements}
\label{sec:BPS q series for plumbed knot complements}
In this subsection we discuss (a renormalization of) the Gukov-Manolescu \cite{GM} series invariant of negative definite plumbed knot complements. 
For a negative definite marked plumbing graph $\Gamma_{v_0}$ with $s+1$ vertices
and a choice of relative $\spinc$ structure 
\[
[b] \in \frac{\h{\delta} + 2\mathbb{Z}^{s+1}}{2\mmatrix(0\times \mathbb{Z}^s)} 
\cong \spinc(\mmfld), 
\]
the corresponding BPS $q$-series is given by
\begin{align}
\begin{aligned}
\label{eq:Zhatplumbedknotcomplement}
\widehat{Z}_{[b]}(q,t) &:= \oint \frac{dz_0}{2\pi i z_0}
(t^{-\frac{1}{2}}z_0-t^{\frac{1}{2}}z_0^{-1})^{1-\delta_0}
\oint \prod_{v\neq v_0} \frac{d z_v}{2\pi i z_v} \left(t^{-\frac{1}{2}}z_v - t^{\frac{1}{2}}z_v^{-1}\right)^{2-\delta_v} \\
&\quad\quad\quad \times 
q^{-\frac{3s+\sum_{v\neq v_0} m_v + \hdual{\onev} M^{-1} \onev}{4}}
\sum_{\ell \in b\vert_{\Gamma} + 2M\mathbb{Z}^{s}}
q^{-\frac{\hdual{\ell} M^{-1}\ell}{4}}
z_0^{\hdual{\onev} M^{-1}(\ell-b\vert_{\Gamma}) + b_0}
\prod_{v\neq v_0}z_v^{\ell_v}.
\end{aligned}
\end{align}

Note, our expression is slightly different from the expression given in \cite[Section 6]{GM}, which depends on a triple of labels, $([a], n_0, \zeta_0)$, where
$[a] \in \frac{\delta + 2\mathbb{Z}^{s+1}}{2M_{v_{0}}(0\times \mathbb{Z}^s)}$, $n_0\in \mathbb{Z}$, and $\zeta_0 \in 1+2\mathbb{Z}$ is the exponent of $z_0$.\footnote{In \cite{GM}  the label $[a]$ is called the ``relative $\spinc$ structure'', which differs from our conventions. See Remark \ref{rem:delta vs delta hat for rel spinc}.} 
A framing $m_0$ on $v_0$ is also fixed in \cite{GM}; we denote the corresponding adjacency matrix by $\smatrix$. 
Under conjugation of the relative $\spinc$ structure, the triple transforms in the following way:
\[
([a], n_0, \zeta_0) \mapsto (-[a], -n_0, -\zeta_0).
\]
In fact, the triple of labels determines a relative $\spinc$ structure by
\[
[(a,\zeta_0,n_0)] := [a - \zeta_0 e_0 + 2n_0(\smatrix e_0)] 
\in \frac{\h{\delta} + 2\mathbb{Z}^{s+1}}{2M_{v_{0}}(0\times \mathbb{Z}^s)} 
\cong \spinc(\mmfld).
\]
Different triples representing the same relative $\spinc$ structure are related by the symmetry described in \cite[Section 6.6]{GM}, but the resulting BPS $q$-series differ by some overall monomial factor if we use the expression given in \cite{GM}. 
In \eqref{eq:Zhatplumbedknotcomplement}, we have fixed the overall normalization so that it depends only on the relative $\spinc$ structure $[b]$, not on the triple of labels representing $[b]$.

\begin{rem}
It is possible to leave the $z_0$ variable unintegrated:
\begin{align}
\begin{aligned}
\label{eq:Zhatz0naive}
\widehat{Z}_{[b]}(z_0, q,t) &:= 
(t^{-\frac{1}{2}}z_0-t^{\frac{1}{2}}z_0^{-1})^{1-\delta_0}
\oint \prod_{v\neq v_0} \frac{d z_v}{2\pi i z_v} \left(t^{-\frac{1}{2}}z_v - t^{\frac{1}{2}}z_v^{-1}\right)^{2-\delta_v} \\
&\quad\quad\quad \times 
q^{-\frac{3s+\sum_{v\neq v_0} m_v + \hdual{\onev} M^{-1} \onev}{4}}
\sum_{\ell \in b\vert_{\Gamma} + 2M\mathbb{Z}^{s}}
q^{-\frac{\hdual{\ell} M^{-1}\ell}{4}}
z_0^{\hdual{\onev} M^{-1}(\ell-b\vert_{\Gamma}) + b_0}
\prod_{v\neq v_0}z_v^{\ell_v}.
\end{aligned}
\end{align}
This expression is again a well-defined invariant, but it does not contain any more information than \eqref{eq:Zhatplumbedknotcomplement}. 
This is because
\begin{align*}
\widehat{Z}_{[b]}(z_0, q, t) = \sum_{k} \left(\oint \frac{dz_0}{2\pi i z_0} \widehat{Z}_{[b]}(z_0, q, t)z_0^{-k} \right) z_0^{k} = \sum_{k} \widehat{Z}_{[b-ke_0]}(q,t) z_0^{k}. 
\end{align*}
Note, when $Y$ is an integer homology sphere so that the set of relative $\spinc$ structures is $\spinc(\mmfld) \cong 1 + 2\mathbb{Z}$ and the meridian acts on it by shifting by $2$, then up to overall power of $z_0$, this is the generating series
\[
\sum_{[b]\in 1+2\mathbb{Z}} \widehat{Z}_{[b]}(q,t) z_0^{[b]}.
\]
\end{rem}

In our main construction of weighted bigraded roots, 
we will actually slightly shift the exponent of $z_0$ and use the following simpler form of BPS $q$-series 
\begin{align}
\begin{aligned}
\label{eq:Zhatz0}
\widehat{Z}_{[b\vert_{\Gamma}]}(z_0, q,t) &:= 
(t^{-\frac{1}{2}}z_0-t^{\frac{1}{2}}z_0^{-1})^{1-\delta_0}
\oint \prod_{v\neq v_0} \frac{d z_v}{2\pi i z_v} \left(t^{-\frac{1}{2}}z_v - t^{\frac{1}{2}}z_v^{-1}\right)^{2-\delta_v} \\
&\quad\quad\quad \times 
q^{-\frac{3s+\sum_{v\neq v_0} m_v + \hdual{\onev} M^{-1} \onev}{4}}
\sum_{\ell \in b\vert_{\Gamma} + 2M\mathbb{Z}^{s}}
q^{-\frac{\hdual{\ell} M^{-1}\ell}{4}}
z_0^{\hdual{\onev} M^{-1}\ell}
\prod_{v\neq v_0}z_v^{\ell_v}
\end{aligned}
\end{align}
which depends only on 
\[
[b\vert_{\Gamma}] \in \frac{\adeg + \onev + 2\mathbb{Z}^s}{2M\mathbb{Z}^s}. 
\]

Recall the map $\relmap_n$ from \eqref{eq:rel spinc to abs spinc map}. 
While it is defined for all odd integers $n$, as in Section \ref{sec:Zhat closed}, only the two choices $\relmap_{\pm 1}$ will transform properly with respect to Neumann moves, in our construction of weighted bigraded roots. 
This is apparent in the proof of Theorem \ref{thm:invariance}. 
As in Section \ref{sec:Zhat closed}, we write $\eps$ in place of $n$ to emphasize this restriction.

With a choice of $\eps \in \{\pm 1\}$, the label $[b\vert_\Gamma]$  can be identified with
\[
\relmap_\eps([b]) = [b\vert_{\Gamma} + \eps(\onev + Mu)] \in \frac{m + 2\mathbb{Z}^s}{2M\mathbb{Z}^s} \cong \spinc(Y),
\]
the $\spinc$ structure on the ambient $3$-manifold obtained by gluing (via $\infty$-surgery) the relative $\spinc$ structure $[b]$ on $Y_{v_0}$ with the relative $\spinc$ structure on the solid torus determined by the choice $\eps \in \{\pm 1\} \subset 1+2\mathbb{Z} \cong \spinc(S^1\times D^2)$. 
In other words, $\widehat{Z}_{[b\vert_{\Gamma}]}(z_0, q,t)$ depends only on the $\spinc$ structure on the ambient manifold determined by $[b]$ and $\eps \in \{\pm 1\}$. 

It is straightforward to check that the expression \eqref{eq:Zhatz0naive} is independent of the choice of representative $b$ of $[b]$, and  that both \eqref{eq:Zhatz0naive} and \eqref{eq:Zhatz0} are invariant under Neumann moves. 
This will follow from the stronger statement in Theorem \ref{thm:invariance}. 

To begin our unification of $\Zhat$ and knot lattice homology, just as in Section \ref{sec:Zhat closed}, we need to identify the lattices which are used to define each theory. 
Namely, the sum in the definition of $\Zhat$ in \eqref{eq:Zhatz0} is over the lattice $b\vert_{\agraph} + 2 \amatrix \Z^s$ while the (bi)graded root is defined in terms of the lattice $\relmap_\eps([b]) = b\vert_{\agraph}  + \eps(\onev + Mu) + 2 \amatrix \Z^s  \subset \Char(\agraph)$. 
With the identification 
\begin{align}
\begin{aligned}
    \label{eq:identifying the lattics}
   b\vert_{\agraph}  + 2 \amatrix \Z^s &\longleftrightarrow b\vert_{\agraph}  + \eps(\onev + Mu) + 2 \amatrix \Z^s \\
    \ell & \longleftrightarrow K = \ell + \eps(\onev + \amatrix \one),
    \end{aligned}
\end{align}
we can rewrite $\widehat{Z}_{[b\vert_{\Gamma}]}(z_0, q,t)$ from \eqref{eq:Zhatz0} as
\begin{equation}
    \label{eq:Zhatz0 in terms of K}
    \widehat{Z}_{[b\vert_{\Gamma}]}(z, q,t) =  (t^{-\frac{1}{2}}z-t^{\frac{1}{2}}z^{-1})^{1-\delta_0} \sum_{K \in \relmap_\eps([b])} \h{W}_{\Gamma_{v_0}}(K) q^{\qexp(K)} z^{\zexp(K)}t^{\texp(K)}.
\end{equation}
We will often write $z$ in place of $z_0$ as above.
 The coefficient $\h{W}_{\mgraph}(K)$ is given by
\begin{equation}
\label{eq:W-hat weight}
       \h{W}_{\Gamma_{v_0}}(K) =  \prod_{i=1}^s \h{W}_{\delta_i} ((K -\eps( M\one + \onev) )_i)
\end{equation}
where $\h{W} = \{\h{W}_n\}_{n\geq 0}$ is the admissible family in \eqref{eq:W hat}. 
The exponents of $q, z$, and $t$ are 
\begin{align}
\label{eq:q exp}
\begin{aligned}
    \qexp(K) &= -\frac{3s+\hdual{\delta} u + 2\sum\limits_{v\neq v_0} m_v + 2\hdual{\onev} M^{-1} \onev}{4}-\frac{\hdual{K}M^{-1}K}{4}+\frac{\eps \hdual{K}M^{-1}\onev}{2}+\frac{\eps \hdual{K} u}{2},
    \end{aligned}
    \end{align}

   \begin{align}
       \label{eq:z exp}
       \zexp(K) &= \hdual{\onev}M^{-1}K-\eps\hdual{\onev} M^{-1}\onev-\eps \delta_0,
   \end{align}

    \begin{align}
    \label{eq:t exp}
    \begin{aligned}
    \texp(K) &= \frac{\hdual{K} u -\eps \sum\limits_{v\neq v_0}(\delta_v+m_v)}{2}.
    \end{aligned}
\end{align}

The series \eqref{eq:Zhatz0naive} can be similarly rewritten as a sum over $\relmap_\eps([b])$. 
The only modification is that the power of $z$ is given by 
\begin{align}
\label{eq:z exp b}
\zexp_{[b]}(K) &= \hdual{\onev}M^{-1}K-\eps\hdual{\onev} M^{-1}\onev-\eps \delta_0-\hdual{\onev}M^{-1}(b\vert_{\agraph} )+b_0 = \zexp(K) -\hdual{\onev}M^{-1}(b\vert_{\agraph} )+b_0.
\end{align} 
See Remark \ref{rem:weighted graded root with b} for a further discussion.

\subsection{The weighted (bi)graded root for plumbed knot complements}
\label{sec:weighted graded root for plumbed knot complements}

In this section we introduce the main construction of the paper: three-variable weights assigned to each node of the graded roots of the ambient plumbing graph. 
We can equivalently package this as a three-variable weight assigned to each node of the bigraded roots, as in Definition \ref{def:weighted bigraded root}, which is often more convenient. Invariance of these objects under Neumann moves is proven in Section \ref{sec:invariance}.   

To begin, let $\mgraph$ be a negative definite marked plumbing graph with $s+1$ vertices. 
As usual, $\agraph = \mgraph\setminus\{v_0\}$  denotes the ambient plumbing graph with intersection form $M$. 
We also have a fixed choice of $\eps \in \{\pm 1\}$, which is often omitted from the notation. 
Let $\Ring$ be a commutative ring and let $W = \{ W_n : \Z\to \Ring\}_{n\geq 0}$ be an admissible family of functions, as in Definition \ref{def:admissible family}. 
As a generalization of \eqref{eq:W-hat weight}, define $W_{\mgraph} : \Char(\agraph) \to \Ring$ by 
\begin{equation}
    \label{eq:weight of component knot complement}
    W_{\Gamma_{v_0}}(K) =  \prod_{i=1}^s W_{\delta_i} ((K -\eps( M\one + \onev) )_i).
\end{equation} 

We now assign $3$-variable Laurent polynomial weights to the graded root of $\agraph$ at a $\spinc$ structure $[k] \in \spinc(\agraph)$. 
For a connected component $C\subset \superlevelone_{i}(\agraph,[k])$, we define the weight of $C$ to be
\begin{align}
\label{eq:U weight}
    W_{\mgraph}(C; z,q,t) = (t^{-\frac{1}{2}}z-t^{\frac{1}{2}}z^{-1})^{1-\delta_0} \sum\limits_{K\in C \cap[k]}W_{\Gamma_{v_0}}(K) q^{\qexp(K)} z^{\zexp(K)}t^{\texp(K)}.
\end{align}
In the above sum, we intersect with $[k]$ to indicate that we only consider the lattice points ($0$-cells) of the component $C$. 
For $\delta_0 \geq 2$, the factor $(t^{-\frac{1}{2}}z-t^{\frac{1}{2}}z^{-1})^{1-\delta_0} $ is expanded according to \eqref{eq:expansion} with the change of variables $z\mapsto t^{-1/2}z$.

\begin{defn}
\label{def:weighted graded root for knot complements}
The object obtained by assigning to each vertex $C$ of the graded root $\root(\agraph, [k])$ the weight $W_{\mgraph}(C;z,q,t)$ is called the \textit{weighted graded root for the knot complement} $\Gamma_{v_0}$ and denoted by $\root_\eps(\mgraph, [k],W)$. 
\end{defn}

\begin{rem}
    While the BPS $q$-series \eqref{eq:Zhatz0} does not depend on the choice of $\eps$, the weighted graded root does. 
    The relationship between the two choices of $\eps$ is the content of Proposition \ref{prop:eps vs -eps knot complement}. 
\end{rem}

\begin{rem}
    From the construction, it is evident that in the limit $h_U \rightarrow -\infty$, the weights stabilize (in a sense analogous to \cite[Definition 6.1]{AJK}) to the series 
    \[
(t^{-\frac{1}{2}}z-t^{\frac{1}{2}}z^{-1})^{1-\delta_0} \sum_{K\in [k]} W_{\mgraph}(K) q^{\qexp(K)} z^{\zexp(K)} t^{\texp(K)}.
    \] 
  In particular, when the admissible family is $\h{W}$, we recover the BPS $q$-series \eqref{eq:Zhatz0} from the weights of the weighted graded root $\root_\eps(\mgraph, [k], \h{W})$. 
\end{rem}

\begin{exmp}[Weighted graded root for the unknot]
Consider the marked plumbing graph $\mgraph$ from Example \ref{ex:bigraded root unknot in S3}, representing the unknot in $S^3$. 
The graded root for $S^3$ consists of a single node in each non-positive even integer grading.

For $K\in \Char(\agraph) = 1+2\Z$, we have $K-\eps(\amatrix u + \onev) = K$, so that $W_{\mgraph}(K) = W_1(K)$ is zero unless $K=\pm 1$. 
We compute:
\[
    W_{\mgraph}(\pm 1) = \mp 1, \ \ \ 
    \qexp(\pm 1)  = 0, \ \ \ 
    \zexp(\pm 1) = \mp 1, \ \ \ 
    \texp(\pm 1) = \mp \frac{ 1 }{2}.
\]
Observe that $h_U(\pm 1) = 0$, so  $1$ and $-1$, the only two characteristic vectors that contribute to the weight, are already contained in the maximal non-empty superlevel set. 
We also note that the above weights in this example are independent of the choice of $\eps \in \{\pm 1\}$. 
The weighted graded root of the unknot is shown in Figure \ref{fig:unknot weighted graded root}. 
\begin{figure}[H]
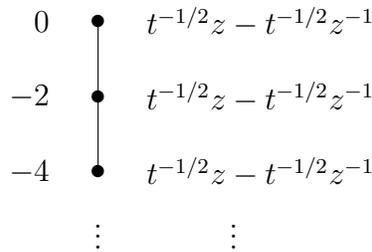

    \centering
    \includestandalone{figures/unknot_weighted_graded_root}
    \caption{The weighted graded root for the unknot.}
    \label{fig:unknot weighted graded root}
\end{figure}

\end{exmp}

\begin{exmp}[Weighted graded root for the trefoil knot] A plumbing representation for the trefoil as well as its bigraded root was given in Figure \ref{fig:trefoil_bigraded_root}. 
The corresponding weighted graded root at $t=1$ is shown in Figure \ref{fig:trefoil weighted graded root}.
\begin{figure}[H]
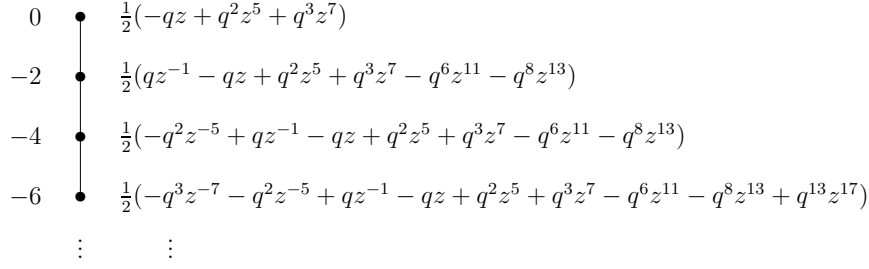

    \centering
    \includestandalone[scale=0.8]{figures/trefoil_weighted_U-graded_root}
    \caption{The weighted graded root for the trefoil at $\eps=+1$, specialized to $t=1$.}
    \label{fig:trefoil weighted graded root}
\end{figure}
\end{exmp}

\begin{defn}
    \label{def:weighted bigraded root}
    For a negative definite marked plumbing graph $\mgraph$, consider its bigraded root $\rootbi(\mgraph, [k])$ for some $\spinc$ structure $[k]\in \spinc(\agraph)$ of the ambient manifold. 
    Given a node of the bigraded root corresponding to a connected component $C$ in some $\superlevelone_{i,j}(\mgraph,[k])$, let $C^U$ denote the connected component of $\superlevelone{}^{U}_i(\mgraph,[k])$ which contains $C$, and define the weight of $C$ to be 
    \[
(t^{-\frac{1}{2}} z - t^{\frac{1}{2}}z^{-1})^{1-\delta_0}\sum_{K\in C^U \cap [k]} W_{\mgraph}(K) q^{\qexp(K)} z^{\zexp(K)} t^{\texp(K)}
    \]
    for some fixed choice of $\eps \in \{\pm 1\}$ and admissible family $W$. 
    We call the result the \emph{weighted bigraded root}, denoted $\rootbi_\eps(\mgraph, [k], W)$. 
\end{defn}

If $D \subset \superlevelone_{i,j-2}(\mgraph,[k])$ is a connected component containing $C$ (meaning there is an edge in the bigraded root in the $V$-direction between the nodes represented by $C$ and $D$) then $C^U = D^U$, so it follows that the weights of $C$ and of $D$ in $\rootbi_\eps(\mgraph, [k], W)$ are equal. 
In other words, all the nodes in a fixed  $U$-coordinate (in the sense of Definition \ref{def:coordinates}) have the same weight in $\rootbi_\eps(\mgraph, [k], W)$. 
See, for example, Figure \ref{fig:trefoil weighted bigraded root}. 
In light of the procedure in Remark \ref{rem:recovering graded root from bigraded root}, it follows that  $\rootbi_\eps(\mgraph, [k], W)$ carries the same information as the pair consisting of the weighted graded root $\root_\eps(\mgraph, [k],W)$ (as defined in Definition \ref{def:weighted graded root for knot complements}) and the bigraded root $\rootbi(\mgraph,[k])$.

\begin{rem}
\label{rem:sum over intersection}
 Let us demonstrate that the weights in \eqref{eq:U weight} do not constitute an invariant if one were to sum over lattice points in connected components of $\superlevelone_{i,j}(\Gamma,[k])$, the intersection of the $h_U$ and $h_V$ superlevel sets. 

For simplicity, we set $q=t=1$. Consider the three marked plumbing graphs $\mgraph, \mgraph', \mgraph''$ below.
  \begin{equation*}
        \begin{tikzpicture}
        \draw (0,0) -- (1,0);
            \node (a) at (0,0) {$\bullet$}; 
            \draw[fill=white] (1,0) circle (2.5pt);
            \node[below] at (a) {$-1$};
            \node[below] at (.5,-.6) {$\mgraph$};
        \end{tikzpicture}
        \hskip6em
         \begin{tikzpicture}
        \draw (-1,0) -- (1,0);
            \node (a) at (0,0) {$\bullet$}; 
            \node (b) at (-1,0) {$\bullet$}; 
            \draw[fill=white] (1,0) circle (2.5pt);
            \node[below] at (a) {$-1$};
             \node[below] at (b) {$-2$};
             \node[below] at (0,-.6) {$\mgraph'$};
        \end{tikzpicture}
        \hskip6em
         \begin{tikzpicture}
        \draw (-1,0) -- (1,0);
            \node (a) at (1,0) {$\bullet$}; 
            \node (b) at (-1,0) {$\bullet$}; 
            \draw[fill=white] (0,0) circle (2.5pt);
            \node[below] at (a) {$-1$};
             \node[below] at (b) {$-1$};
             \node[below] at (0,-.6) {$\mgraph''$};
        \end{tikzpicture}
    \end{equation*}
    Each of them represents the unknot in $S^3$; indeed, $\mgraph'$ (resp.  $\mgraph''$) is obtained from $\mgraph$ by a type \hyperlink{A0}{(A0)} move (resp. a type \hyperlink{B0}{(B0)} move). 
    We denote by $\agraph, \agraph'$, and $\agraph''$ the ambient plumbing graphs, and let $\mathfrak{s}_0$ denote the unique $\spinc$ structure on the ambient manifold in each case. 
    
For $K\in \Char(\agraph)$, 
\[
h_U(K) = \frac{ K^2 + 1}{4}, \hskip2em h_V(K) = \frac{(K+2)^2+1}{4},
\]
so that the maximum value of both $h_U$ and $h_V$ is equal to zero. 
Then $\superlevelone_{0,0} (\mgraph,\mathfrak{s}_0)= \{-1\}$ is a singleton. One can verify that, at $q=t=1$,  
\[
W_{\mgraph}(-1) = z
\]
for both choices of $\eps \in \{\pm 1\}$.

Next, for $K=(n_1, n_2) \in \Char(\agraph')$,
\begin{align*}
h_U(K) &= \frac{-n_1^2 - 2 n_1 n_2-2 n_2n_2+2}{4},\\
h_V(K) &= \frac{-n_1 n_1 - 2 n_1 (n_2+2)-2 (n_2+2) (n_2+2)+2}{4} 
\end{align*}
It is straightforward to verify that $\superlevelone_{0,0}(\mgraph',\mathfrak{s}_0) = \{(0,-1)\}$ is again a singleton. At $q=t=1$, we have
\[
W_{\mgraph'}(0,-1) = 
\begin{cases}
    0 & \text{ if } \eps=1, \\
    z & \text{ if } \eps=-1.
\end{cases}
\]
Finally, for a characteristic vector $K=(n_1, n_2) \in \Char(\agraph'')$, we have 
\begin{align*}
h_U(K) &= \frac{-n_1 n_1-n_2 n_2+2}{4}, \\
h_V(K) &= \frac{-(n_1+2) (n_1+2)-(n_2+2) (n_2+2)+2}{4}.
\end{align*}
It is straightforward to check that again $\superlevelone_{0,0}(\mgraph'',\mathfrak{s}_0) = \{(-1,-1)\}$ is a singleton. The weight at $q=t=1$, for both choices of $\eps$, is
\[
W_{\mgraph''}(-1,-1) = (z-z^{-1})^{-1}z^{2}.
\]
A modification of the notion of an admissible family or the weight assigned to a characteristic vector could potentially yield an invariant weighted bigraded root in which weights are assigned by summing over lattice points in connected components of $\superlevelone_{i,j}(\mgraph,[k])$; 
however, we do not pursue this in the present paper. 
\end{rem}

We end this subsection with an analogue of Proposition \ref{prop:epsilon vs -epsilon} for knot complements. 

\begin{prop}
\label{prop:eps vs -eps knot complement}
    Let $W$ be an admissible family satisfying property \eqref{eq:AD3}. 
    For any  negative definite marked plumbing graph $\mgraph$ and $\spinc$ structure $[k] \in \spinc(\agraph)$, $\root_{-\eps}(\mgraph, [-k],W)$ is obtained from $\root_\eps(\mgraph, [k],W)$ by the change of variables $z\mapsto z^{-1}, t\mapsto t^{-1}$, and negating each weight. 
\end{prop}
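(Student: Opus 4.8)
The strategy mirrors the proof of Proposition \ref{prop:epsilon vs -epsilon} in the closed case. The key is the involution $\iota$ of $\Char(\agraph)$ given by $\iota(K) = -K$. The first step is to observe that $\iota$ carries the combinatorics for $[k]$ to that for $[-k]$: since $h_U(K) = h_U(-K)$ and $h_V(K) = h_U(K + 2\onev)$ satisfies $h_V(-K) = h_U(-K + 2\onev) = h_U(K - 2\onev)$, we need to check that $\iota$ intertwines the $U$-superlevel sets of $(\mgraph,[k])$ with the $U$-superlevel sets of $(\mgraph,[-k])$ (immediate from $h_U$-invariance) and likewise for the $V$-superlevel sets, using that $h_V$ with respect to $[-k]$ is $h_U(\cdot + 2\onev)$ evaluated on the lattice $[-k]$; the upshot is that $\iota$ induces an isomorphism of underlying graded roots $\root(\agraph,[k]) \xrightarrow{\sim} \root(\agraph,[-k])$, and (if one wishes to track the bigraded root) of bigraded roots, swapping $U$ and $V$ coordinates appropriately. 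Since the weighted graded root in Definition \ref{def:weighted graded root for knot complements} lives on $\root(\agraph,[k])$, it suffices to check that $\iota$ matches up the weights after the stated substitution.

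The second step is the weight comparison, which is a direct computation analogous to the closed case. For $K \in \Char(\agraph)$ and $K' = \iota(K) = -K$, one checks term by term how the four ingredients of \eqref{eq:U weight} transform under $\eps \mapsto -\eps$: the coefficient $W_{\mgraph}(K')$, the $q$-power $\qexp(K')$, the $z$-power $\zexp(K')$, the $t$-power $\texp(K')$, and the prefactor $(t^{-1/2}z - t^{1/2}z^{-1})^{1-\delta_0}$. For the coefficient, using property \eqref{eq:AD3} and the fact that $K' - (-\eps)(M\one+\onev) = -(K - \eps(M\one+\onev))$, one gets $W_{\mgraph}(K')|_{-\eps} = (-1)^{\sum_{i=1}^s \delta_i} W_{\mgraph}(K)|_{\eps}$. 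Here the sign $(-1)^{\sum_{i=1}^s \delta_i}$ need not be $+1$, because $\sum_{i=1}^s \delta_i$ is the sum of degrees in the \emph{ambient} graph $\agraph$, which is not a closed graph — it equals $2(\#\text{edges of }\agraph) + (\#\text{edges of }\mgraph\text{ at }v_0) = 2\#E(\agraph) + \delta_0$, so the sign is $(-1)^{\delta_0}$. This sign is exactly compensated by the prefactor: replacing $(z,t)$ by $(z^{-1},t^{-1})$ sends $t^{-1/2}z - t^{1/2}z^{-1}$ to $-(t^{-1/2}z - t^{1/2}z^{-1})$, contributing $(-1)^{1-\delta_0} = (-1)^{1+\delta_0}$, i.e.\ an extra $(-1)^{\delta_0+1}$; combined with the coefficient sign $(-1)^{\delta_0}$ this yields an overall $(-1)$, which is precisely the ``negating each weight'' in the statement. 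For the exponents: $\qexp(K')|_{-\eps} = \qexp(K)|_{\eps}$ (the quadratic term $\hdual{K}M^{-1}K$ is even, and the cross terms $\eps\hdual{K}M^{-1}\onev$ and $\eps\hdual{K}u$ each pick up two sign flips), $\zexp(K')|_{-\eps} = -\zexp(K)|_{\eps}$ (again two sign flips in each term), and $\texp(K')|_{-\eps} = -\texp(K)|_{\eps}$ (the term $\hdual{K}u$ flips, and $\eps\sum_{v\neq v_0}(\delta_v+m_v)$ flips). Thus the monomial $q^{\qexp(K)}z^{\zexp(K)}t^{\texp(K)}$ for $\eps$ becomes $q^{\qexp(K')}(z^{-1})^{\zexp(K')}(t^{-1})^{\texp(K')}$ for $-\eps$, matching the substitution $z\mapsto z^{-1}, t\mapsto t^{-1}$.

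Assembling these, for each connected component $C \subset \superlevelone_i(\agraph,[k])$ with image $C' = \iota(C) \subset \superlevelone_i(\agraph,[-k])$, the weight $W_{\mgraph}(C'; z, q, t)$ computed for $-\eps$ equals $-W_{\mgraph}(C; z^{-1}, q, t^{-1})$ computed for $\eps$, which is exactly the assertion. The main obstacle is bookkeeping the sign $(-1)^{\delta_0}$: one must correctly identify $\sum_{i=1}^s\delta_i$ as the ambient degree sum and recognize that, unlike in the closed setting where the analogous sum is even, here its parity is $\delta_0$, and that this sign is absorbed by the $(t^{-1/2}z - t^{1/2}z^{-1})^{1-\delta_0}$ prefactor up to the prescribed global sign flip. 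Everything else is a routine sign-chase through \eqref{eq:q exp}, \eqref{eq:z exp}, \eqref{eq:t exp}, and \eqref{eq:weight of component knot complement}.
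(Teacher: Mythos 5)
Your proposal is correct and follows essentially the same route as the paper: the involution $\iota(K)=-K$ inducing $\root(\agraph,[k])\cong\root(\agraph,[-k])$, the term-by-term verification that $\qexp_{-\eps}(-K)=\qexp_\eps(K)$, $\zexp_{-\eps}(-K)=-\zexp_\eps(K)$, $\texp_{-\eps}(-K)=-\texp_\eps(K)$, the sign $(-1)^{\sum_{v\neq v_0}\delta_v}=(-1)^{\delta_0}$ from \eqref{eq:AD3}, and its cancellation against the $(-1)^{1-\delta_0}$ coming from the prefactor to leave the overall negation. Your explicit identification of the residual parity as $\delta_0$ and the remark that the $V$-grading plays no role are just slightly more detailed renderings of the paper's argument.
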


\begin{proof}
    Recall the involution $\iota$ of $\Char(\agraph)$, $\iota(K) = -K$, which induces an isomorphism of graded roots $\root(\agraph, [-k]) \cong \root(\agraph, [k])$. 
    We refine the notation in equations \eqref{eq:q exp}, \eqref{eq:z exp}, \eqref{eq:t exp} and \eqref{eq:weight of component knot complement} to include the choice of $\eps$, writing $\qexp_{\eps}$, $\zexp_\eps$, $\texp_\eps$, and $W_{\mgraph, \eps}$. 
    It is straightforward to verify that 
    \[
\qexp_{-\eps}(-K) = \qexp_\eps(K), \ \ \zexp_{-\eps}(-K) = -\zexp_{\eps}(K), \ \ \texp_{-\eps}(-K) = -\texp_\eps(K).
    \]
    Property \eqref{eq:AD3} implies
    \[
W_{\mgraph, -\eps}(-K) = (-1)^{\sum\limits_{v\neq v_0} \delta_v} W_{\mgraph, \eps}(K).
    \]
    We also have
    \[
(-1)^{\sum\limits_{v\neq v_0} \delta_v}   (t^{\frac{1}{2}}z^{-1}-t^{-\frac{1}{2}}z)^{1-\delta_0}
 = (-1)^{1+\sum\limits_{v} \delta_v} (t^{-\frac{1}{2}}z - t^{\frac{1}{2}}z^{-1})^{1-\delta_0}
 =
 -(t^{-\frac{1}{2}}z - t^{\frac{1}{2}}z^{-1})^{1-\delta_0},
\]
which completes the proof. 
\end{proof}


\subsection{Invariance}
\label{sec:invariance}

In this section we establish invariance of the weighted graded root for knot complements. 
The following notation will be used throughout. 
Recall the maps $\alpha_{rel}$ from  \eqref{eqref:eq:alpha rel maps}. 
Fix $\eps \in \{\pm1\}$ and a pair of  negative definite marked plumbing graphs $\mgraph$ and $\mgraphp$ with $s+1$ and $s+2$ vertices, respectively, such that $\mgraphp$ is obtained from $\mgraph$ by one of the four Neumann moves. 
Recall from the discussion surrounding the diagram \eqref{eq:spinc maps Neumman moves} that the ambient plumbing graphs $\agraph = \mgraph\setminus\{v_0\}$ and $\agraphp = \mgraphp\setminus\{v_0\}$ transform according to one of the type \hyperlink{A}{(A)}, \hyperlink{B}{(B)}, or \hyperlink{C}{(C)} moves. 
We fix $[k]\in \spinc(\agraph)$ and $[k']\in \spinc(\agraphp)$ such that $[k'] = \beta([k])$, where $\beta$ is the corresponding map from \eqref{eq:A0 closed}, \eqref{eq:B0 closed}, \eqref{eq:C}.

It follows from  \cite{Nem_On_the,  Niemi-Colvin} that the graded roots $\root(\agraph, [k])$ and $\root(\agraphp, [k'])$ of the ambient plumbing graphs are isomorphic. 
For our proof of Theorem \ref{thm:invariance}, we would like to have a specific map of lattices which induces this isomorphism.

\begin{lem}
\label{lem:ambient graded root is invariant}
    For each of the four Neumann moves, the corresponding map $\beta_\pm$ from \eqref{eq:A0 closed lattice}, \eqref{eq:B0 closed lattice}, \eqref{eq:C lattice} induces an isomorphism  $\root(\agraph, [k]) \cong \root(\agraphp, [k'])$ of graded roots of the ambient plumbing graphs.
\end{lem}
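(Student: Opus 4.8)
The statement to establish is Lemma \ref{lem:ambient graded root is invariant}, namely that for each of the four Neumann moves on marked plumbing graphs (type \hyperlink{A}{(A)}, \hyperlink{A0}{(A0)}, \hyperlink{B}{(B)}, \hyperlink{B0}{(B0)}), the corresponding map $\beta_\pm$ on the characteristic lattice of the ambient graph induces an isomorphism of graded roots $\root(\agraph, [k]) \cong \root(\agraphp, [k'])$. The key observation is that the ambient graphs transform by the moves type \hyperlink{A}{(A)}, \hyperlink{B}{(B)}, \hyperlink{B}{(B)}, \hyperlink{C}{(C)} respectively (this is recorded in the discussion around \eqref{eq:spinc maps Neumman moves}), so the cases where $\agraph \to \agraphp$ is an \hyperlink{A}{(A)} or \hyperlink{B}{(B)} move are \emph{already handled by Lemma \ref{lem:beta1 and beta-1 induce isos on graded roots}}. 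Thus the only genuinely new content is the type \hyperlink{C}{(C)} move, coming from the \hyperlink{B0}{(B0)} move on $\mgraph$, where $\beta_\pm(k) = (k, \pm 1)$ as in \eqref{eq:C lattice}.

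So the plan is: first, dispatch the \hyperlink{A}{(A)}, \hyperlink{A0}{(A0)}, \hyperlink{B}{(B)} cases by citing Lemma \ref{lem:beta1 and beta-1 induce isos on graded roots} directly, since $\beta_\pm$ in \eqref{eq:A0 closed lattice}, \eqref{eq:B0 closed lattice} is exactly the map treated there. Second, for the type \hyperlink{C}{(C)} move, I would follow the same template as the proof of Lemma \ref{lem:beta1 and beta-1 induce isos on graded roots}: let $K \in \Char(\agraph)$, set $K' = \beta_\pm(K) = (K, \pm 1)$, and verify $h_U(K') = h_U(K)$. Using \eqref{eq:how matrices transform} for the \hyperlink{B0}{(B0)}-move (where $\amatrixp = \SmallMatrix{\amatrix & 0 \\ 0 & -1}$, a block-diagonal form), one has $(\amatrixp)^{-1} = \SmallMatrix{\amatrix^{-1} & 0 \\ 0 & -1}$, so $\hdual{(K')}(\amatrixp)^{-1} K' = \hdual{K}\amatrix^{-1}K - 1$; since $\agraphp$ has one more vertex, $h_U(K') = \frac{(K')^2 + (s+1)}{4} = \frac{K^2 - 1 + s + 1}{4} = \frac{K^2 + s}{4} = h_U(K)$. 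Hence $\beta_\pm$ restricts to maps of superlevel sets $\superlevel_h(\agraph,[k]) \to \superlevel_h(\agraphp,[k'])$ for all relevant $h$. Third, I need to check this is an isomorphism of the $1$-dimensional CW complexes and hence of graded roots. For the \hyperlink{B0}{(B0)}/\hyperlink{C}{(C)} case this is essentially the classical stabilization: the new vertex $v_{s+1}$ has weight $-1$ and is disconnected from the rest of $\agraphp$ (the block-diagonal matrix), so the lattice $[k']$ is a product and the new coordinate is a $1$-dimensional negative-definite summand whose superlevel sets are intervals — the components are unchanged. Concretely, $\beta_+(K)$ and $\beta_-(K)$ are connected by an edge in $\superlevelone_h(\agraphp,[k'])$ since $\beta_-(K) = \beta_+(K) + 2\amatrixp e_{s+1}$, so both induce the same map on $\pi_0$, and this map is a bijection on components with an inverse given by restriction $K' \mapsto K'\vert_\agraph$ (forgetting the last coordinate). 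I would cite \cite[Proposition 4.6]{Nem_On_the} or \cite[Proposition 3.4.2]{Nem_Lattice_cohomology} for the general statement that this stabilization induces a graded root isomorphism, exactly as in the proof of Lemma \ref{lem:beta1 and beta-1 induce isos on graded roots}.

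The main obstacle — really a bookkeeping point rather than a deep difficulty — is making sure the grading normalizations match: the graded roots $\root(\agraph,[k])$ and $\root(\agraphp,[k'])$ are defined with gradings living in $2\Z + h_U(k)$ and $2\Z + h_U(k')$ respectively, so I must confirm $h_U(k) \equiv h_U(k') \bmod 2$ (which follows from $h_U(k') = h_U(k)$ computed above for a matched pair of representatives under $\beta_\pm$) and that $\beta_\pm$ sends the representative $k$ of $[k]$ to a representative $k'$ of $[k'] = \beta([k])$ — this last compatibility between the lattice-level map $\beta_\pm$ and the $\spinc$-level map $\beta$ in \eqref{eq:C} is immediate from the definitions but should be stated. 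I would write the proof compactly:

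\begin{proof}
For the type \hyperlink{A}{(A)} and \hyperlink{A0}{(A0)} moves on $\mgraph$, the ambient graph undergoes a type \hyperlink{A}{(A)} move, and for the type \hyperlink{B}{(B)} move the ambient graph undergoes a type \hyperlink{B}{(B)} move; in these cases $\beta_\pm$ is precisely the map in \eqref{eq:A0 closed lattice} or \eqref{eq:B0 closed lattice}, so the claim follows from Lemma \ref{lem:beta1 and beta-1 induce isos on graded roots}.

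It remains to treat the type \hyperlink{B0}{(B0)} move on $\mgraph$, under which the ambient graph undergoes a type \hyperlink{C}{(C)} move with $\beta_\pm(k) = (k,\pm 1)$ as in \eqref{eq:C lattice}. By \eqref{eq:how matrices transform} we have $\amatrixp = \SmallMatrix{\amatrix & 0 \\ 0 & -1}$, hence $(\amatrixp)^{-1} = \SmallMatrix{\amatrix^{-1} & 0 \\ 0 & -1}$. For $K\in \Char(\agraph)$ and $K' = \beta_\pm(K) = (K,\pm 1)$ we get
\[
\hdual{(K')}(\amatrixp)^{-1}K' = \hdual{K}\amatrix^{-1}K - 1,
\]
so that $h_U(K') = \tfrac{(K')^2 + (s+1)}{4} = \tfrac{K^2 + s}{4} = h_U(K)$. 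In particular $\beta_\pm$ restricts to a map $\superlevel_h(\agraph,[k]) \to \superlevel_h(\agraphp,[k'])$ for every $h \in h_U(k) + 2\Z$, and since $\amatrixp$ is block diagonal with the last block the $1\times 1$ negative definite matrix $(-1)$, the $v_{s+1}$-coordinate contributes a disconnected interval to every superlevel set; thus forgetting this coordinate gives an inverse on connected components. After the translation explained in Section \ref{subsec:graded roots}, one of $\beta_+$ or $\beta_-$ agrees with the stabilization map of \cite[Proposition 4.6]{Nem_On_the}, \cite[Proposition 3.4.2]{Nem_Lattice_cohomology}, which induces an isomorphism of graded roots. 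Since $\beta_-(K) = \beta_+(K) + 2\amatrixp e_{s+1}$, the points $\beta_+(K)$ and $\beta_-(K)$ lie in the same connected component of $\superlevelone_h(\agraphp,[k'])$, so $\beta_+$ and $\beta_-$ induce the same (iso)morphism $\root(\agraph,[k]) \cong \root(\agraphp,[k'])$.
\end{proof}
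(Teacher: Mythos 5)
Your proposal is correct, and for the type \hyperlink{A}{(A)}, \hyperlink{A0}{(A0)}, and \hyperlink{B}{(B)} moves it coincides with the paper's proof verbatim: both reduce to Lemma \ref{lem:beta1 and beta-1 induce isos on graded roots}. The only substantive divergence is in the type \hyperlink{B0}{(B0)} case, where the ambient graph undergoes a type \hyperlink{C}{(C)} move. There the paper argues surjectivity and injectivity on $\pi_0$ by hand, using the computation \eqref{eq:type C iso} to show that any $K'=(\b{K'},K_{s+1})$ can be slid along the $e_{s+1}$-direction into the image of $\beta_\pm$ while staying inside the superlevel set, and that paths upstairs project to paths downstairs. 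You instead exploit the block-diagonal form of $\amatrixp$ to view $\superlevel_h(\agraphp,[k'])$ as fibered over $\superlevel_h(\agraph,[k])$ with interval fibers, with the projection furnishing the inverse on components. This is the same argument in different clothing, and it works; note that $h_U(K,n)=h_U(K)+\tfrac{1-n^2}{4}$, so both required facts (the projection lands in the downstairs superlevel set, and each $(K,n)$ is connected to $(K,\pm 1)$ along its fiber) follow, though your sentence ``contributes a disconnected interval \dots thus forgetting this coordinate gives an inverse'' compresses them and would benefit from being spelled out. One caution: the citation of \cite[Proposition 4.6]{Nem_On_the} and \cite[Proposition 3.4.2]{Nem_Lattice_cohomology} is appropriate for the \hyperlink{A}{(A)} and \hyperlink{B}{(B)} moves on connected trees, but the type \hyperlink{C}{(C)} move produces a disconnected graph, which is presumably why the paper proves this case directly rather than citing N\'emethi; since your direct argument suffices on its own, you should drop or qualify that citation in the \hyperlink{B0}{(B0)} case.
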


\begin{proof}
    Each of the type \hyperlink{A}{(A)}, \hyperlink{A0}{(A0)}, and \hyperlink{B}{(B)} moves have the effect of transforming $\agraph$ according to the type \hyperlink{A}{(A)} or \hyperlink{B}{(B)} moves, so the statement is given by Lemma \ref{lem:beta1 and beta-1 induce isos on graded roots}. 
    
   It remains to verify the type \hyperlink{B0}{(B0)} move, which transforms the ambient plumbing graphs according to the type \hyperlink{C}{(C)}. 
   From \eqref{eq:how matrices transform}, we see that $h_U(\beta_\pm(K)) = h_U(K)$ for all $K\in \Char(\agraph)$. 
   Moreover, for $h\in h_U(k) + 2\Z$ and $1\leq i \leq s$, if $K,K+ 2\amatrix e_i \in \superlevel_h(\agraph, [k])$, then 
   \[
    \beta_{\pm }(K+ 2\amatrix e_i) = (K+ 2\amatrix e_i, \pm 1) = \beta_{\pm }(K) + 2 \amatrixp e_i.
   \]
   Therefore $\beta_\pm$ induces a map $\til{\beta}_\pm$ from the connected components of $\superlevelone_h(\agraph, [k])$ to the connected components of $\superlevelone_h(\agraphp, [k'])$, given by sending a component $C$ containing a characteristic vector $K$ to the component $\til{\beta}_\pm(C)$ containing $\beta_\pm(K)$. 
   We will show $\til{\beta}_\pm$ is a bijection. 
   First, for a vertex $K'$ in $\superlevel_h(\agraphp, [k'])$ and $1\leq i \leq s+1$, we have
   \begin{align}
   \begin{aligned}
   \label{eq:type C iso}
\hdual{(K' + 2\amatrixp e_{i})}  (\amatrixp)^{-1} (K' + 2\amatrixp e_{i})
& =\hdual{(K')} 
 (\amatrixp)^{-1} K' + 4 K' e_i +4 M'_{ii}, \\
 \hdual{(K' - 2\amatrixp e_{i})}  (\amatrixp)^{-1} (K' - 2\amatrixp e_{i})
& =\hdual{(K')} 
 (\amatrixp)^{-1} K' - 4 K' e_i +4 M'_{ii}.
 \end{aligned}
   \end{align}
   
   To show surjectivity of $\til{\beta}_\pm$, write $K' = (\b{K'}, K_{s+1})$ for $\b{K'} \in \Char(\agraph, [k])$, $K_{s+1} \in 1+2\Z$. 
   Applying \eqref{eq:type C iso} with $i=s+1$, we see that
   if $K_{s+1} \geq 1$ then 
   \[
   h \leq h_U(K') \leq h_U(K' + 2\amatrixp e_{s+1}) = h_U(K'- 2e_{s+1}),
   \]
   and if $K_{s+1} \leq -1$ then 
   \[
   h \leq h_U(K') \leq h_U(K' - 2\amatrixp e_{s+1}) = h_U(K'+ 2e_{s+1}).
   \]
   In either case, $K'$ is always connected by a sequence of edges lying inside $\superlevelone_h(\agraphp, [k'])$ to a vertex of $\superlevelone_h(\agraphp, [k'])$ that is in the image of $\beta_\pm$, which establishes surjectivity. 

    We now prove injectivity. 
    First, note that $h_U(K') \leq h_U(\b{K'})$. Setting $L = K' + 2M' e_i$, we have 
    \[
    \b{L} = 
    \begin{cases}
        \b{K'} + 2\amatrix e_i & \text{ if } i \leq s, \\
        \b{K'} & \text{ if } i = s+1
    \end{cases}
    \]
    Therefore any path of edges in $\superlevelone_h(\agraphp, [k'])$ projects to a path of edges in $\superlevelone_h(\agraph, [k])$, which demonstrates injectivity. 
\end{proof}

\begin{thm}
\label{thm:invariance}
For any  negative definite marked plumbing graph $\mgraph$, $\spinc$ structure $[k]\in \spinc(\agraph)$, admissible family of functions $W$, and fixed $\eps\in \{\pm 1\}$, the weighted graded root $\root_\eps(\mgraph, [k],W)$ for plumbed knot complements is invariant under Neumann moves. 
It follows that the weighted bigraded root $\rootbi_\eps(\mgraph, [k], W)$ is also invariant under Neumann moves. 
\end{thm}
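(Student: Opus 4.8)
The plan is to verify invariance separately for each of the four Neumann moves \hyperlink{A}{(A)}, \hyperlink{A0}{(A0)}, \hyperlink{B}{(B)}, \hyperlink{B0}{(B0)}, following the template of the proof of Theorem \ref{thm:weighted graded root invariance}. In every case Lemma \ref{lem:ambient graded root is invariant} already supplies an isomorphism $\root(\agraph,[k])\xrightarrow{\ \sim\ }\root(\agraphp,[k'])$ of the underlying graded roots, induced on lattices by the appropriate map $\beta_\pm$ from \eqref{eq:A0 closed lattice}, \eqref{eq:B0 closed lattice}, \eqref{eq:C lattice}; so the work is to check, move by move, that this isomorphism preserves the weight \eqref{eq:U weight} attached to each component $C\subset\superlevelone_i(\agraph,[k])$. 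Unwinding \eqref{eq:U weight}, this is three bookkeeping tasks: (i) confirm the prefactor $(t^{-1/2}z-t^{1/2}z^{-1})^{1-\delta_0}$ transforms correctly, which depends on how $\delta_0$ changes and on the coherence of the admissible expansion \eqref{eq:expansion}; (ii) match the weight function $W_{\mgraph}$ of \eqref{eq:weight of component knot complement} on characteristic vectors; and (iii) check the exponents $\qexp,\zexp,\texp$ of \eqref{eq:q exp}--\eqref{eq:t exp}. Tasks (ii) and the $q$- and $t$-parts of (iii) are essentially the computations already done in Theorem \ref{thm:weighted graded root invariance}; the genuinely new content is the $z$-exponent and the prefactor.

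For the \hyperlink{A}{(A)} and \hyperlink{B}{(B)} moves the marked vertex plays no role: $\delta_0$ is unchanged so the prefactor is literally the same, and the ambient graphs transform by the closed-case moves \hyperlink{A}{(A)} and \hyperlink{B}{(B)} respectively (see \eqref{eq:how matrices transform}). For \hyperlink{A}{(A)} one uses the single map $\beta_\eps$; since the new vertex has degree $2$, property \ref{item:AD1} forces its coordinate to equal $\eps$ exactly as in loc.\ cit., and one then checks $W_{\mgraphp}(\beta_\eps(K))=W_{\mgraph}(K)$, invariance of $\qexp,\texp$ (as in loc.\ cit.) and of $\zexp$ by a short computation with \eqref{eq:z exp} and \eqref{eq:how matrices transform}. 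For \hyperlink{B}{(B)} the new vertex has degree $1$, so — as in the closed type \hyperlink{B}{(B)} argument — one sums the contributions of $\beta_+(K)$ and $\beta_-(K)$ (which carry identical $q,z,t$-monomials, as $\beta_+(K)-\beta_-(K)$ lies in the $-2\amatrixp e_{s+1}$ direction and the new vertex is not adjacent to $v_0$), collapses $W_{\delta_1+1}(\widetilde K_1+1)-W_{\delta_1+1}(\widetilde K_1-1)=W_{\delta_1}(\widetilde K_1)$ via \ref{item:AD2}, and checks the exponents.

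The moves involving $v_0$ are the ones to watch. For \hyperlink{A0}{(A0)} the ambient graph transforms by a type \hyperlink{B}{(B)} move, but the new vertex is adjacent to $v_0$ in $\mgraphp$ and hence has \emph{degree $2$} there, so its factor of $W_{\mgraphp}$ is a $W_2$, not a $W_1$; property \ref{item:AD1} again forces its coordinate, and \hyperlink{A0}{(A0)} is handled exactly like \hyperlink{A}{(A)} with a single $\beta_\eps$, with $\delta_0$ (and hence the prefactor) unchanged. The move \hyperlink{B0}{(B0)} is the only one that changes $\delta_0$: it attaches a $(-1)$-leaf to $v_0$, so $\delta_0$ increases by $1$ and the prefactor acquires an extra factor $(t^{-1/2}z-t^{1/2}z^{-1})^{-1}$, interpreted via \eqref{eq:expansion}. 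The new vertex is disconnected with weight $-1$ in $\agraphp$, so has $\agraphp$-degree $0$ but $\mgraphp$-degree $1$; a direct computation shows that a characteristic vector whose last coordinate is $j$ contributes $-j$, $\tfrac{j^2-1}{4}$, $\tfrac{j}{2}$ to the $z$-, $q$-, $t$-exponents respectively, so only $j=\pm1$ contribute (where $W_1(j)\neq0$), and
\[
\sum_{j\in\{-1,1\}} W_1(j)\,z^{-j}t^{j/2}= W_1(-1)\,z\,t^{-1/2}+W_1(1)\,z^{-1}t^{1/2}= t^{-1/2}z-t^{1/2}z^{-1}
\]
is precisely the factor needed to cancel the extra $(t^{-1/2}z-t^{1/2}z^{-1})^{-1}$ from the prefactor, via the coherence identity $(z-z^{-1})\cdot(z-z^{-1})^{2-n}=(z-z^{-1})^{2-(n-1)}$ of the admissible family (and, when $\delta_0\le1$, the fact that $(z-z^{-1})^{-1}$ interpreted as in \eqref{eq:expansion} is a genuine inverse of $z-z^{-1}$ in $\BI$). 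Checking that the residual $q,z,t$-exponents on the rest of $K$ match under $\beta_\pm$ then closes this case. I expect this \hyperlink{B0}{(B0)} bookkeeping — aligning the change of the $\delta_0$-prefactor against the contribution of the new disconnected vertex — to be the main obstacle; it is exactly what pins down $\eps\in\{\pm1\}$ (a general odd $n$ spoils the match), mirroring the phenomenon in Section \ref{sec:Zhat closed}.

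Finally, the statement for the weighted bigraded root follows formally. By Theorem \ref{thm:bigraded root invariance} the underlying bigraded roots $\rootbi(\mgraph,[k])$ and $\rootbi(\mgraphp,[k'])$ are isomorphic, and — inspecting that proof — the isomorphism is again induced by the lattice map $\beta_\pm$ and is compatible with collapsing to the $U$-graded root in the sense of Remark \ref{rem:recovering graded root from bigraded root}. Since, by Definition \ref{def:weighted bigraded root}, the weight of a node of the bigraded root equals the weight in $\root_\eps(\mgraph,[k],W)$ of the $U$-graded-root node onto which it collapses, and we have just shown $\beta_\pm$ carries $\root_\eps(\mgraph,[k],W)$ isomorphically onto $\root_\eps(\mgraphp,[k'],W)$, this isomorphism of bigraded roots automatically respects the weights, i.e.\ $\rootbi_\eps(\mgraph,[k],W)\cong\rootbi_\eps(\mgraphp,[k'],W)$.
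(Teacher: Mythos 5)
Your proposal is correct and follows essentially the same route as the paper's proof: a case-by-case check over the four Neumann moves using the lattice maps $\beta_\pm$ and Lemma \ref{lem:ambient graded root is invariant}, with (A) and (A0) handled by a single $\beta_\eps$ (the new vertex having marked-graph degree $2$), (B) by summing the $\beta_+$ and $\beta_-$ contributions via \ref{item:AD2}, and (B0) by matching the new degree-one vertex's factor $W_1(-1)\,z\,t^{-1/2}+W_1(1)\,z^{-1}t^{1/2}=t^{-1/2}z-t^{1/2}z^{-1}$ against the change $\delta_0'=\delta_0+1$ in the prefactor, followed by the same formal deduction for the bigraded root. Your (B0) bookkeeping agrees exactly with the paper's computation of $\qexp,\zexp,\texp$ on $K'_\pm=(K,\pm1)$, so there is nothing to add.
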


\begin{proof}
Fix $i\in h_U(k) + 2\Z$, a connected component $C\subset \superlevelone_i(\agraph, [k])$, and set $C' = \til{\beta}_\pm(C)$, where $\til{\beta}_\pm$ is the induced map on components of superlevel sets as in the proof of Lemma \ref{lem:ambient graded root is invariant} (note $C'$ is independent of $\pm$ since $\beta_+$ and $\beta_-$ differ by an edge). 
For each of the four Neumann moves, we will show that 
\begin{equation}
\label{eq:invariance main equation}
W_{\mgraph}(C; z,q,t) = W_{\mgraphp}(C'; z,q,t).  
\end{equation}

Note that $h_U(\beta_\pm(K)) = h_U(K)$ for all $K\in \Char(\agraph)$, which is equivalent to 
\[
\hdual{(\beta_{\pm}(K))} (\amatrixp)^{-1}\beta_{\pm}(K) = \hdual{K} \amatrixp^{-1} K -1.
\]
Note also that the degree of the marked vertex changes only for the \hyperlink{B0}{(B0)} move, so  
\[
(t^{-\frac{1}{2}}z-t^{\frac{1}{2}}z^{-1})^{1-\delta_0} = (t^{-\frac{1}{2}}z-t^{\frac{1}{2}}z^{-1})^{1-\delta_0'}
\]
for the other three moves. 
For this reason, we will not mention this factor in the proof until the \hyperlink{B0}{(B0)} move. 
In the same spirit, due to how the ambient graphs transform, the proofs of invariance under the type \hyperlink{A}{(A)}, \hyperlink{A0}{(A0)}, and \hyperlink{B}{(B)} moves are similar in structure to the proof of \cite[Theorem 5.9]{AJK}, though the weights in the present paper are quite different. 
The \hyperlink{B0}{(B0)} move, left to the end, is the most technical.   \\

\noindent
\textbf{Type \hyperlink{A}{(A)}:} Let $K\in \Char(\agraph)$ and let $K' = \beta_{\eps}(K)$. 
We begin by showing 
\[
W_{\mgraph}(K) q^{\qexp(K)}z^{\zexp(K)}t^{\texp(K)} = W_{\mgraphp}(K') q^{\qexp(K')}z^{\zexp(K')}t^{\texp(K')} .
\]
First, observe that if $M^{-1}K = x = (x_1, x_2, \ldots, x_s)$ then $
  \amatrixp(x, x_1+x_2-\eps) = K'.$ 
We also have $\onevp = (\onev, 0)$, so
\begin{align*}
    \hdual{(K')}(\amatrixp)^{-1}\onevp = \hdual{(x, x_1+x_2-\eps)} (\onev, 0) =  \hdual{K}\amatrix^{-1}\onev.
\end{align*}
Similarly, if $\amatrix^{-1}\onev = y$ then $  M'(y, y_1+y_2) = \onevp$, which implies 
\begin{align*}
    \hdual{(\onevp)}(\amatrixp)^{-1}\onevp = \hdual{\onev} \amatrix^{-1}\onev.
\end{align*}
We also have 
\[
    \hdual{(\delta')} u =\hdual{ \delta} u +2, \ \  \sum_{v\neq v_0}m'_v = \sum_{v\neq v_0}m_v-3, \ \ \text{and } \hdual{(K')} u= \hdual{K}u -\eps. 
\]
These calculations together imply $\qexp(K) = \qexp(K'), \zexp(K) = \zexp(K')$, and $\texp(K) = \texp(K')$.  

It remains to verify $W_{\mgraph}(K) = W_{\mgraphp}(K')$. 
We have 
\[
K'-\eps(\onevp +\amatrixp u) = (K -\eps(\onev+\amatrix u),0),
\]
which, using the formula for $W_2$ from \eqref{eq:W_1 and W_0}, gives 
\[
W_{\mgraphp}(K')= W_{\mgraph}(K) \cdot W_2(0) =  W_{\mgraph}(K). 
\]

To finish invariance under the type \hyperlink{A}{(A)} move, we will show that any  characteristic vector $H\in C'$ which is not in the image of $\beta_\eps$ has weight zero. 
To that end, note that the $(s+1)$-st entry of $H- \eps(\onevp + \amatrixp u)$ is equal to $H_{s+1} - \eps$. 
Since $\delta'_{s+1} = 2$, we see that if $H_{s+1} \neq \eps$ then $W_{\agraphp, [b']}(H) = 0$. On the other hand, if $H_{s+1} = \eps$, then  we may take $\beta_\eps((H_1+\eps, H_2+\eps, H_3 \ldots, H_s)) = H$. 
Therefore equation \eqref{eq:invariance main equation} is established for the type \hyperlink{A}{(A)} move.\\

\noindent
\textbf{Type \hyperlink{A0}{(A0)}:} 
Let $K\in \Char(\agraph)$ and let $K' = \beta_{\eps}(K)$. We again begin by showing 
\[
W_{\mgraph}(K) q^{\qexp(K)}z^{\zexp(K)}t^{\texp(K)} = W_{\mgraphp}(K') q^{\qexp(K')}z^{\zexp(K')}t^{\texp(K')} .
\]
If $\amatrix^{-1}K = x$ then $ \amatrixp(x, x_1-\eps) = K' $. We also have $\onevp = (\onev, 0)+(-1,0, \ldots, 0, 1)$, which gives 
\[
\hdual{(K')}(\amatrixp)^{-1}\onevp =\hdual{K}M^{-1}\lambda-\eps.
\]
Further, if $\amatrix^{-1}\lambda = y$, then $\amatrixp(y, y_1-1) =  \onevp$, which implies
\[
 \hdual{(\onevp)}(\amatrixp)^{-1}\onevp  =\hdual{\lambda}\amatrix ^{-1}\onev-1.
\]
We also have 
\[
\hdual{(\delta')} u = \hdual{\delta}u +2, \ \ \sum\limits_{v\neq v_0}m'_v = \sum_{v\neq v_0}m_v-2, \ \ \text{and } \hdual{ (K')} u = \hdual{K} u.
\]
It follows that $\qexp(K) = \qexp(K'), \zexp(K) = \zexp(K')$, and $\texp(K) = \texp(K')$.

Next, we have
\[
K'-\eps(\onevp +\amatrixp u) = (K -\eps(\onev+\amatrix u),0),
\]
which gives 
\[
W_{\mgraph}(K')= W_{\mgraph}(K) \cdot W_2(0) =  W_{\mgraph}(K).
\]

To finish the proof of equation \eqref{eq:invariance main equation} in this case, just like in the type \hyperlink{A}{(A)} move we will show that any characteristic vector not in the image of $\beta_\eps$ has weight zero. 
Let $H\in [k']$. 
Then the $(s+1)$-st entry of $H- \eps(\onevp + \amatrixp u)$ is equal to $H_{s+1} - \eps$. 
Since $\delta'_{s+1} = 2$, we see that if $H_{s+1} \neq \eps$ then $W_{\agraphp}(H) = 0$. 
If $H_{s+1} = \eps$, then we may take $\beta_\eps((H_1+\eps, H_2, H_3 \ldots, H_s)) = H$, which establishes equation \eqref{eq:invariance main equation} for the type \hyperlink{A0}{(A0)} move.\\

\noindent
\textbf{Type \hyperlink{B}{(B)}:} The proof of this case will use both $\beta_+$ and $\beta_{-}$, even though our choice of $\eps$ is fixed. For $K\in C$, set
\begin{align*}
    K_+' &= \beta_+(K) =  (K, 0) + (-1, 0,\ldots, 0, 1),\\
    K_{-}'  &= \beta_{-}(K) = (K,0) + (1, 0,\ldots, 0, -1).
\end{align*}

Our first goal for the type \hyperlink{B}{(B)} move is to show that 
\begin{equation}
\label{eq:type B invariance equation}
W_{\mgraph}(K) q^{\qexp(K)}z^{\zexp(K)}t^{\texp(K)} 
= W_{\mgraphp}(K_+') q^{\qexp(K'_{+})}z^{\zexp(K'_{+})}t^{\texp(K'_{+})} 
+ W_{\mgraphp}(K_{-}') q^{\qexp(K'_{-})}z^{\zexp(K'_{-})}t^{\texp(K'_{-})}.
\end{equation}
To that end, if $\amatrix^{-1}K = x$, then $ \amatrixp(x, x_1\mp 1) = K_\pm'$. 
Since $\onevp = (\onev, 0)$, we have $\hdual{(K'_\pm)}(\amatrixp)^{-1}\onevp = \hdual{K}\amatrix^{-1}\onev$. 
Similarly, if $\amatrix^{-1}\onev = y$, then $\amatrixp(y, y_1) = \onevp$, which gives
\[
\hdual{(\onevp)}(\amatrixp)^{-1}\onevp  =\hdual{\onev}\amatrix^{-1}\onev.
\]
We also have \[
\hdual{(\delta')} u = \hdual{\delta}u + 2, \ \ \sum\limits_{v\neq v_0}m'_v = \sum_{v\neq v_0}m_v-2, \ \text{and \ }   \hdual{(K_\pm')} u = \hdual{K} u. 
\]
These computations imply that $\qexp(K) = \qexp(K'_+)=\qexp(K'_{-})$, $\zexp(K) =  \zexp(K'_+) = \zexp(K'_{-})$, and $\texp(K) = \texp(K'_+) = \texp(K'_{-})$.

Next, setting $\til{K} = K -\eps(\lambda+Mu)$, we have
\begin{align*}
K_+' - \eps(\onevp + \amatrixp u )
&=
(\til{K},0)+(-1,0,\ldots, 0,1), \\
K_{-}' - \eps(\onevp + \amatrixp u)
&=
(\til{K},0)+(1,0,\ldots, 0,-1). 
\end{align*}
Using the formula for $W_1$ from \eqref{eq:W_1 and W_0}, it follows that
\begin{align*}
    W_{\mgraphp}(K_+') &=W_{\delta'_1}(\til{K}_1-1)W_{\delta'_{s+1}}(1)\prod_{i=2}^{s}W_{\delta'_i}(\til{K}_i) \\
    &=W_{\delta_1+1}(\til{K}_1-1)W_{1}(1)\prod_{i=2}^{s}W_{\delta_i}(\til{K}_i)
    =-W_{\delta_1+1}(\til{K}_1-1)\prod_{i=2}^{s}W_{\delta_i}(\til{K}_i),
\end{align*}
and similarly,  $W_{\mgraphp}(K_{-}') = W_{\delta_1+1}(\til{K}_1+1)\prod_{i=2}^{s}W_{\delta_i}(\til{K}_i)$. 
Property \ref{item:AD2} gives
\[
W_{\delta_1+1}(\til{K}_1+1)-W_{\delta_1+1}(\til{K}_1-1) =W_{\delta_1}(\til{K}_1),
\]
which implies $W_{\mgraph}(K) = W_{\mgraphp}(K_+') + W_{\mgraphp}(K_{-}')$. Together with the earlier computations of the $q,z$, and $t$ exponents we arrive at \eqref{eq:type B invariance equation}. 

To finish the proof of invariance under the \hyperlink{B}{(B)} move, we will show that  $H\in [k']$ has weight zero unless it is in the image of $\beta_+$ or $\beta_{-}$. 
To see this, observe that the $(s+1)$-st entry of $H - \eps(\onevp + \amatrixp u)$ is equal to $H_{s+1}$. Since $\delta'_{s+1} =1$, we see that the weight of $H$ is zero unless $H_{s+1} = \pm 1$. 
On the other hand, $\beta_{\pm }((H_1\pm 1, H_2, H_3 \ldots, H_s)) = H$, which completes the proof of invariance under the type \hyperlink{B}{(B)} move.
\\

\noindent
\textbf{Type \hyperlink{B0}{(B0)}:} Let $K\in C$. For this move we will again use both maps $\beta_+$ and $\beta_{-}$. As in the \hyperlink{B}{(B)} move, set
\begin{align*}
    K_+' &= \beta_+(K) =  (K,1 ), \\
    K_{-}'  &= \beta_{-}(K) = (K,-1).
\end{align*}
Our first goal is to show that 
\begin{equation}
\label{eq:B1 equality}
\begin{split}
 & \left(t^{-\frac{1}{2}}z-t^{\frac{1}{2}}z^{-1}\right)^{1-\delta_0} W_{\mgraph}(K)q^{\qexp(K)}z^{\zexp(K)}t^{\texp(K)}  = \\ 
& ( t^{-\frac{1}{2}}z-t^{\frac{1}{2}}z^{-1})^{1-\delta'_0}
\left[W_{\mgraphp}(K'_{-})q^{\qexp(K'_{-})}z^{\zexp(K'_{-})}t^{\texp(K'_{-})}   +  W_{\mgraphp}(K'_{+})q^{\qexp(K'_{+})}z^{\zexp(K'_{+})}t^{\texp(K'_{+})} \right].
\end{split}
\end{equation}

To start, $(\amatrixp)^{-1} K'_{\pm} = (\amatrix^{-1}K, \mp 1)$. Since $\onevp = (\onev,1)$, this gives 
\begin{align*}
    \hdual{(K'_\pm)}(\amatrixp)^{-1}\onevp  &= \hdual{K}\amatrix^{-1}\onev\mp 1, \\ \hdual{(\lambda')}(\amatrixp)^{-1}\onevp & =\hdual{\onev}\amatrix^{-1}\onev-1.
\end{align*}
We also have
\[
\hdual{( \delta')} u = \hdual{\delta}u + 2, \ \ \sum_{v\neq v_0}m'_v = \sum_{v\neq v_0}m_v-1, \ \text{ and } \hdual{(K_\pm')} u = \hdual{K}u \pm 1. 
\]
The above calculations imply that 
\[
\qexp(K) = \qexp(K'_{+}) = \qexp(K'_{-}). 
\]
Next, observe that $\delta_0' = \delta_0+1$ in this move, which together with the above equalities gives 
\begin{align*}
\zexp(K'_+) + 1 = \zexp(K) = \zexp(K'_{-}) - 1, \\
\texp(K'_+)  -\frac{1}{2} = \texp(K) = \texp(K_{-}') + \frac{1}{2}. 
\end{align*}
Next, as in the proof of the type \hyperlink{B}{(B)} move, if we set $\til{K} = K-\eps(\onev+Mu)$, then  
\[
K_+'-\eps(\onevp +M'u) = (\til{K},1) \text{ and } K_{-}'-\eps(\onevp +M'u) = (\til{K},-1).
\]
Since $\delta_{s+1} = 1$, it follows that
\begin{align*}
    W_{\mgraphp}(K_+') & = W_{1}(1)\prod_{i=1}^{s}W_{\delta'_i}(\til{K}_i) = -W_{\mgraph}(K), \\
    W_{\mgraphp}(K_{-}') & = W_{1}(-1)\prod_{i=1}^{s}W_{\delta'_i}(\til{K}_i) = W_{\mgraph}(K),
\end{align*}
which, together with $\delta_0' = \delta_0 +1$ and the earlier calculations of the $q, z$, and $t$, exponents, establishes equation \eqref{eq:B1 equality}. 

To finish the proof of invariance under the \hyperlink{B0}{(B0)} move, we will show that any $H\in [k']$ contributes zero to the weight of $C'$ unless $H$ is in the image of $\beta_+$ or $\beta_{-}$. 
To that end, we see that the $(s+1)$-st entry of $H- \eps(\onevp + \amatrixp u)$ is equal to $H_{s+1}$, so that the weight of $H$ is equal to zero unless $H_{s+1} = \pm 1$, in which case we have $\beta_{\pm}(H_1, \ldots, H_s) = H$. 
This verifies equation \eqref{eq:invariance main equation} for the type \hyperlink{B0}{(B0)} move and concludes the proof of the first part of the theorem. 

Invariance of the weighted bigraded root follows almost immediately. Theorem \ref{thm:bigraded root invariance} states that the bigraded root is invariant under Neumann moves. The weights of all nodes in a fixed $U$-coordinate in $\rootbi_\eps(\mgraph, [k], W)$ are  equal to the  weight of the corresponding node in $\root_\eps(\mgraph, [k],W)$. Invariance of $\rootbi_\eps(\mgraph, [k], W)$ then follows from invariance of $\root_\eps(\mgraph, [k],W)$.  
\end{proof}

\begin{rem}
\label{rem:weighted graded root with b}

One could define weights based on $ \widehat{Z}_{[b]}$ from \eqref{eq:Zhatz0naive}, which depends on a relative $\spinc$ structure $[b]$, rather than based on $\Zhat_{[b\vert_\Gamma]}$ from \eqref{eq:Zhatz0}, which depends only on the $\spinc$ structure $\relmap_\eps([b])$. 
The only modification is to use $\zexp_{[b]}$ from \eqref{eq:z exp b} as the exponent of $z$. 
The resulting weighted graded root is also an invariant under Neumann moves, where the relative $\spinc$ structures transform according to the maps $\alpha_{rel}$ given in \eqref{eqref:eq:alpha rel maps}. 
Letting $b'= \alpha_{rel}(b)$, for the type \hyperlink{A}{(A)}, \hyperlink{A}{(A0)}, and \hyperlink{B}{(B)} move we have $\hdual{\onevp}\amatrixp^{-1}(b'\vert_{\agraphp} ) = \hdual{\onev} \amatrix^{-1}(b\vert_{\agraph})$ and $b_0' = b_0$, while for the type \hyperlink{B0}{(B0)} move we have $\hdual{\onevp}\amatrixp^{-1}(b'\vert_{\agraphp}) = \hdual{\onev} \amatrix^{-1}(b\vert_{\agraph}) + 1$ and $b_0' = b_0 +1$. 
Since $\zexp_{[b]} = \zexp -\hdual{\onev}M^{-1}(b\vert_{\agraph} )+b_0$, invariance of this alternative weighted graded root then follows from the proof of Theorem \ref{thm:invariance}. 
Also, it is a straightforward calculation to see that if $[b_1], [b_2] \in \spinc(\mgraph)$  satisfy $\relmap_\eps([b_1]) = \relmap_\eps([b_2])$, then for any $K \in \relmap_\eps([b_2])$ 
\[
\zexp_{[b_1]}(K) = \zexp_{[b_2]}(K) + 2r,
\]
where $r\in \Z$ is some fixed integer. 
Therefore  different relative $\spinc$ structures in the same fiber of $\relmap_\eps$ lead to power series which differ by some even power of $z$. 

\end{rem}


\section{Surgery formula for weighted graded roots}\label{sec:surgery_formula}

Let $\mgraph$ be a negative definite marked plumbing tree with $s+1$ vertices, and let $m_0\in \Z$ be a framing on $v_0$ such that the surgered graph $\sgraph$ is negative definite. 
As usual, we set $\agraph = \mgraph \setminus \{v_0\}$. 
In Section \ref{sec:surgery formula for bigradd roots} we described how to obtain the graded root for $\sgraph$ at a $\spinc$ structure $\mathfrak{t}\in \spinc(\sgraph)$ from the bigraded roots of $\mgraph$. 
A surgery formula for the $q$-series $\Zhat$ of the closed manifold $\smfld$ in terms of the $q,z$-series of the knot complement represented by $\mgraph$ was given in \cite[Theorem 1.2 and Section 6.8]{GM}. 
We note also that a more general gluing formula was provided in \cite[Section 6.3]{GM}. 
In this section we unify and refine the surgery formulas: namely, we describe how to obtain the weighted graded roots of $\sgraph$ from the weighted bigraded roots of $\mgraph$.

As in Section \ref{sec:surgery formula for bigradd roots}, in this section we will use $L$ to denote a characteristic vector of $\sgraph$ and $K$ to denote a characteristic vector of $\agraph$. 
We introduce the following additional notation. 
For $L\in \Char(\sgraph)$, set
\begin{align*}
&W_{\sgraph}(L;q,t)\\
&= q^{  -\frac{3(s+1) + \sum m_v + (L- \eps \smatrix u)^2}{4} }t^{\frac{\hdual{L}u - \eps \hdual{u}\smatrix u}{2}} W_{\sgraph}(L) \\
&= 
q^{-\frac{3(s+1) + \sum m_v}{4}}
\left. 
\left( 
\oint \prod_{v}\frac{dz_v}{2\pi i z_v} (t^{-1/2}z_v - t^{1/2}z_v^{-1})^{2-\delta_v} q^{- \frac{\hdual{\ell}\smatrix^{-1} \ell}{4}} \prod_v z_v^{\ell_v} 
\right)
\right\rvert_{\ell = L - \eps \smatrix u}.
\end{align*}
Throughout this section, $(t^{-1/2}z_v - t^{1/2}z_v^{-1})^{2-\delta_v}$ is expanded according to \eqref{eq:expansion} via the substitution $z\mapsto t^{-1/2}{z_v}$, and the integral is interpreted as recording the constant term of the integrand as discussed in Section \ref{sec:Zhat closed}. 

Note that $W_{\sgraph}(L;q,t)$ differs from $W_{\sgraph}(L)$ in that the former includes also the monomial in $q$ and $t$ that $L$ contributes. 
The weight in \eqref{eq:weights in weighted graded root} is then obtained by summing $W_{\sgraph}(L;q,t)$ over the $0$-cells of a connected component $C$. 
Analogously, for $K\in \Char(\agraph)$, define
\begin{align*}
W_{\mgraph}(K; z,q,t) & = 
 (t^{-\frac{1}{2}}z - t^{\frac{1}{2}}z^{-1})^{1-\delta_0}  W_{\mgraph}(K)q^{\qexp(K)}z^{\zexp(K)}t^{\texp(K)}\\
&=
(t^{-\frac{1}{2}}z - t^{\frac{1}{2}}z^{-1})^{1-\delta_0} q^{-\frac{3s + \sum_{v\neq v_0}m_v}{4}} \times \\
& 
\;\;  
\left(
\oint \prod_{v\neq v_0}\frac{dz_v}{2\pi i z_v} (t^{-1/2}z_v - t^{1/2}z_v^{-1})^{2-\delta_v} q^{- \frac{\hdual{\ell}\amatrix^{-1} \ell}{4}} \prod_{v\neq v_0} z_v^{\ell_v} 
\right)
\Bigg\rvert_{\ell = K - \eps(\onev + \amatrix u)}.
\end{align*}
Recall that $\sf = \hdual{\onev}\amatrix^{-1}\onev\in \Q$ is the (rational) Siefert framing and that
\[
p = \Sigma^2 = m_0 - \hdual{\onev}\amatrix^{-1}\onev = m_0 - \sf 
\in \Q.
\]

\begin{lem}
\label{lem:surgery lattice point}
Let $L\in \Char(\sgraph, \mathfrak{t})$ with $a= a(L)$, and let $K = L\vert_{\agraph}  \in \Char(\mgraph, \mathfrak{t}_a)$. Then
\[
W_{\sgraph}(L;q,t) 
=
\oint
\frac{dz}{2\pi i z} (t^{-\frac{1}{2}}z - t^{\frac{1}{2}}z^{-1})
W_{\mgraph}(K;z,q,t)\;z^{2(a-\frac{1+\eps}{2}p)}q^{\frac{-3-p}{4}} q^{-\frac{1}{p}(a - \frac{1+\eps}{2}p)^2}.
\]
\end{lem}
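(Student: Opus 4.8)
The plan is to unfold both sides into explicit Laurent monomials in $q$ and $t$ and to check that all the exponents agree; the integral $\oint\frac{dz}{2\pi i z}$ on the right simply extracts a coefficient of a (bi-infinite) series in $z$. Write $L = (L_0, K)$ with $K = L\vert_{\agraph}$, let $\one$ denote an all-ones vector (of length $s+1$ or $s$ as dictated by context), and set $\ell := L - \eps\smatrix\one = (\ell_0, \tilde\ell)$, where $\tilde\ell = K - \eps(\onev + M\one)$; since $\smatrix\one = (m_0 + \delta_0,\, \onev + M\one)$ we have $\ell_0 = L_0 - \eps(m_0 + \delta_0)$. From \eqref{eq:weight of component knot complement} one reads off that $W_{\sgraph}(L) = W_{\delta_0}(\ell_0)\,W_{\mgraph}(K)$. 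On the right-hand side, the factorization $(t^{-\frac12}z - t^{\frac12}z^{-1})^{2-\delta_0} = (t^{-\frac12}z - t^{\frac12}z^{-1})\cdot (t^{-\frac12}z - t^{\frac12}z^{-1})^{1-\delta_0}$, valid by the coherence of the expansion \eqref{eq:expansion} (which uses \ref{item:AD2} when $\delta_0 \geq 2$), shows that the integrand's $z$-dependence is $(t^{-\frac12}z - t^{\frac12}z^{-1})^{2-\delta_0}\, z^{N}$ with $N = \zexp(K) + 2(a - \frac{1+\eps}{2}p)$, so the $z$-integral contributes $W_{\delta_0}(N)\,t^{N/2}$. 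Hence the scalar parts will match once $N = \ell_0$ is verified, and the remaining task is to match the $q$- and $t$-exponents.

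The conceptual step is a quadratic-form splitting. Using the block-inverse formula for $\smatrix^{-1}$ recorded in the proof of Lemma \ref{lem:L^2 and K^2} (equivalently \eqref{eq:inverse of e_0} together with \eqref{eq:Sigma}), the same computation carried out there, applied now to the vector $\ell$ rather than to $L$, gives
\[
\hdual{\ell}\,\smatrix^{-1}\ell \;=\; \hdual{\tilde\ell}\,M^{-1}\tilde\ell \;+\; \frac{1}{p}\bigl(\ell_0 - \hdual{\onev}M^{-1}\tilde\ell\bigr)^2.
\]
I then evaluate the last bracket: expanding $\hdual{\onev}M^{-1}\tilde\ell = \hdual{\onev}M^{-1}K - \eps\hdual{\onev}M^{-1}\onev - \eps\delta_0$ and substituting $L_0 = 2a - p + \hdual{\onev}M^{-1}K$ (derived in that same proof, with $p = \Sigma^2$), the $\hdual{\onev}M^{-1}K$ terms cancel and one gets $\ell_0 - \hdual{\onev}M^{-1}\tilde\ell = 2a - p - \eps(m_0 - \hdual{\onev}M^{-1}\onev) = 2a - (1+\eps)p = 2(a - \frac{1+\eps}{2}p)$. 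Therefore, on the one hand $\hdual{\ell}\smatrix^{-1}\ell = \hdual{\tilde\ell}M^{-1}\tilde\ell + \frac{4}{p}(a - \frac{1+\eps}{2}p)^2$, which is precisely the source of the factor $q^{-\frac1p(a - \frac{1+\eps}{2}p)^2}$ on the right; and on the other hand, comparing with \eqref{eq:z exp}, one finds $N = \zexp(K) + 2(a - \frac{1+\eps}{2}p) = \ell_0$, which settles both the $z$-exponent and the scalar factor.

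What remains is linear bookkeeping of the $q$- and $t$-prefactors, which I would carry out by substituting $m_0 = p + \sf = p + \hdual{\onev}M^{-1}\onev$ and the degree identity $\hdual{\delta}\one = 2\delta_0 + \hdual{\adeg}\one$ (which follows from $\delta = (\delta_0, 0, \dots, 0) + (0, \adeg + \onev)$ together with $\hdual{\onev}\one = \delta_0$). For the $q$-exponent, after expanding $\hdual{\tilde\ell}M^{-1}\tilde\ell$ one checks that
\[
-\frac{3(s+1) + \sum_v m_v + \hdual{\tilde\ell}M^{-1}\tilde\ell}{4} \;=\; \qexp(K) - \frac{3+p}{4},
\]
which matches \eqref{eq:q exp}; here the $q^{\frac{-3-p}{4}}$ on the right is exactly the discrepancy between the overall $q$-normalizations of $\sgraph$ and of $\agraph$ once $m_0 = p + \sf$ is taken into account. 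For the $t$-exponent, one checks that $\frac{\hdual{L}\one - \eps\hdual{\one}\smatrix\one}{2} = \texp(K) + \frac{\ell_0}{2}$, where $\frac{\ell_0}{2}$ is the $t$-weight $t^{N/2}$ (with $N = \ell_0$) extracted by the $z$-integral in the first paragraph; this again reduces to the same degree identity after substituting the value of $L_0$. I expect the quadratic-form splitting and its evaluation at $2(a - \frac{1+\eps}{2}p)$ to be the only point requiring real thought; everything else is of the same flavour as the computations in \cite{AJK} and in Lemma \ref{lem:L^2 and K^2} and Proposition \ref{prop:slice is isomorphic to superlevel of ambient mfld}, and the main pitfall is keeping the $\eps$-dependent shifts — and the distinction between an all-ones vector in $\Z^{s+1}$ and its ambient restriction — straight throughout.
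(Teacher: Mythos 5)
Your proposal is correct and follows essentially the same route as the paper: the heart of both arguments is the block-diagonalization of $\smatrix^{-1}$ from Lemma \ref{lem:L^2 and K^2}, used to split the quadratic form and produce the factor $q^{-\frac{1}{p}(a-\frac{1+\eps}{2}p)^2}$, combined with the identification of the $z$-exponent with $\ell_0$. The only (cosmetic) difference is that you apply the splitting directly to the shifted vector $\ell = L - \eps\smatrix\one$, whereas the paper applies it to $L$ and then expands the cross-terms; your version makes the link between the $q$-correction and the $z$-exponent slightly more transparent, and you spell out the $t$-exponent and $W$-factor matching that the paper leaves as "straightforward."
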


\begin{proof}
    We  first establish the following regarding the $q$-power in $W_{\sgraph}(L;q,t)$:
    \begin{equation}
    \label{eq:surgery - q exp of L and its restriction}
   -\dfrac{3(s+1) + \sum m_v + (L- \eps \smatrix u)^2}{4} 
   =
   -\dfrac{3+p}{4}
   - \dfrac{\left(a- \frac{1+\eps}{2}p\right)^2}{p}
   +
   \qexp(K). 
\end{equation}
We have
    \begin{align*}
        (L-\eps \smatrix u )^2 & = K^2 + \dfrac{(p-2a)^2}{p} - 2\eps \hdual{u}L +  \hdual{u} \smatrix u \\
        &=K^2 + \dfrac{(p-2a)^2}{p} - 2\eps \hdual{u}K  -2\eps L_0 +  m_0 + \hdual{\delta } u  + \sum_{v\neq v_0} m_v, \\
    \end{align*}
where in the first equality we use Lemma \ref{lem:L^2 and K^2}. 
From the proof of Lemma \ref{lem:L^2 and K^2}, we also have
\[
L_0 = 2a - m_0 + \hdual{\onev}\amatrix^{-1}\onev + \hdual{K}\amatrix^{-1} \onev = 2a - p +  \hdual{K}\amatrix^{-1}\onev.
\]
The left-hand side of \eqref{eq:surgery - q exp of L and its restriction} is then
\begin{align*}
-\dfrac{3+2m_0}{4} - \dfrac{(p-2a)^2}{4 p} + \dfrac{\eps(2a-p)}{2} +\left[-\dfrac{3s + \hdual{\delta} u+ 2\sum_{v\neq v_0} m_v}{4}  -\dfrac{K^2}{4}  + \dfrac{\eps \hdual{K}u}{2}  + \dfrac{\eps \hdual{K} \amatrix^{-1} \onev}{2} \right].
\end{align*}
Note that the term in square brackets above is equal to $
\qexp(K) + \frac{\hdual{\onev}\amatrix^{-1} \onev}{2}$.
On the right-hand side of \eqref{eq:surgery - q exp of L and its restriction}, we have
\begin{align*}
    \dfrac{\left(a- \frac{1+\eps}{2}p\right)^2}{p} 
    =
    \dfrac{\left(p-2a \right)^2}{4p} - \dfrac{\eps(2a-p)}{2} + \dfrac{m_0 - \hdual{\onev} \amatrix^{-1} \onev}{4},
\end{align*}
so that 
\[
-\dfrac{3+p}{4}
- \dfrac{\left(a- \frac{1+\eps}{2}p\right)^2}{p} 
= 
- \dfrac{3+2m_0}{4} -\dfrac{\left(p-2a \right)^2}{4p} + \dfrac{\eps(2a-p)}{2}  +\dfrac{ \hdual{\onev} \amatrix^{-1} \onev}{2},
\]
which completes the proof of \eqref{eq:surgery - q exp of L and its restriction}. 
This together with the straightforward computation  
\[
\zexp(K) 
=
\onev \amatrix^{-1}(K-\eps(\onev + \amatrix u)) 
=
-2\left(a- \frac{1+\eps}{2}\Sigma^2\right) + L_0 - \eps(m_0 + \delta_0)
\]
implies the statement of the lemma.
\end{proof}

\begin{defn}
Define the \emph{Laplace transform} $\mathcal{L}_p^{(a, \eps)}$ to be 
\begin{align*}
\mathcal{L}_p^{(a, \eps)} : 
z^u 
\;\mapsto\;
&\oint
\frac{dz}{2\pi i z} (t^{-\frac{1}{2}}z - t^{\frac{1}{2}}z^{-1})
z^u\;z^{2(a-\frac{1+\eps}{2}p)}q^{\frac{-3-p}{4}} q^{-\frac{1}{p}(a - \frac{1+\eps}{2}p)^2}\\
&\quad = 
\begin{cases}
    t^{-\frac{1}{2}}q^{-\frac{1}{p}(a - \frac{1+\eps}{2}p)^2 + \frac{-3-p}{4}} &\text{if } u = -2(a-\frac{1+\eps}{2}p) - 1\\
    -t^{\frac{1}{2}}q^{-\frac{1}{p}(a - \frac{1+\eps}{2}p)^2 + \frac{-3-p}{4}} &\text{if } u = -2(a-\frac{1+\eps}{2}p) + 1\\
    0 &\text{otherwise}
\end{cases},
\end{align*}
and extend linearly. 
\end{defn}

Combining Lemma \ref{lem:surgery lattice point} with the proof of invariance of weighted (bi)graded root (Theorem \ref{thm:invariance}) and the surgery algorithm for (bi)graded roots (Proposition \ref{prop:bigraded root algorithm}),
we immediately have: 
\begin{thm}
\label{thm:p surgery}
The weighted graded root of $(\sgraph,\mathfrak{t})$ is determined by the weighted bigraded roots of $\mgraph$, according to the following algorithm:
\begin{enumerate}
\item Consider the weighted graded graph 
\[
\bigsqcup\limits_{a \in \coset(\mathfrak{t})}
\mathcal{L}_p^{(a,\eps)}\left[\root_{\eps}(\mgraph, \mathfrak{t}_a, W)\right]\{-\shift(a)\},
\]
where $\{d\}$ denotes up upwards grading shift by $d$, 
and we are applying the Laplace transform $\mathcal{L}_p^{(a,\eps)}$ to the individual weights of the weighted graded root. 
\item For each pair of nodes $\eta_1$ of 
\[
\mathcal{L}_p^{(a,\eps)}\left[\root_{\eps}(\mgraph, \mathfrak{t}_a, W)\right]\{-\shift(a)\}
\]
and $\eta_2$ of 
\[
\mathcal{L}_p^{(a+\Sigma^2,\eps)}\left[\root_{\eps}(\mgraph, \mathfrak{t}_{a+\Sigma^2}, W)\right]\{-\shift(a+\Sigma^2)\}
\]
which are in the same grading, we identify $\eta_1$ and $\eta_2$ if there is a node in the bigraded root $\rootbi(\mgraph, \mathfrak{t}_{a})$ with  coordinate $(\eta_1, \eta_2)$ (see Definition \ref{def:coordinates}). 
When we identify the nodes, we add up the weights. 
After all of these identifications, we remove multiple edges connecting the same pair of vertices. 
\end{enumerate}
\end{thm}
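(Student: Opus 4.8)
The plan is to assemble Theorem~\ref{thm:p surgery} from three ingredients already in hand: the lattice-point identity of Lemma~\ref{lem:surgery lattice point}, the combinatorial surgery algorithm for bigraded roots (Proposition~\ref{prop:bigraded root algorithm}), and the component-wise bookkeeping established in the proof of Theorem~\ref{thm:invariance}. The point is that all three are ``local'' in the appropriate sense — Lemma~\ref{lem:surgery lattice point} is an identity for a single characteristic vector $L\in\Char(\sgraph,\mathfrak{t})$, and everything else is linearity of the Laplace transform plus the matching of connected components — so the theorem follows by summing.

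First I would fix $\mathfrak{t}\in\spinc(\sgraph)$ and $h\in h_U(\text{rep})+2\Z$, and analyze a single connected component $C\subset\superlevelone_h(\sgraph,\mathfrak{t})$. Using \eqref{eq:edges in a slice}, $C$ breaks into Alexander-graded slices $C\cap\superlevelone_h^{a}(\sgraph,\mathfrak{t})$ for $a\in\coset(\mathfrak{t})$, each of which, by Proposition~\ref{prop:slice is isomorphic to superlevel of ambient mfld}, is isomorphic via $L\mapsto L|_\agraph$ to a connected component of $\superlevelone_{h[a]}(\agraph,\mathfrak{t}_a)$; that is, to a node of $\root(\agraph,\mathfrak{t}_a)$ at height $h[a]$, which after the shift $\{-\shift(a)\}$ sits at height $h$. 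Summing $W_{\sgraph}(L;q,t)$ over the $0$-cells $L$ of such a slice and applying Lemma~\ref{lem:surgery lattice point} termwise — the Laplace transform $\mathcal{L}_p^{(a,\eps)}$ being linear — shows that the total weight of that slice equals $\mathcal{L}_p^{(a,\eps)}$ applied to the weight of the corresponding node of $\root_\eps(\mgraph,\mathfrak{t}_a,W)$, shifted by $\{-\shift(a)\}$. This is exactly step~(1). Step~(2) is then forced: two slices at Alexander gradings $a$ and $a+\Sigma^2=a+p$ lie in the same component $C$ of $\superlevelone_h(\sgraph,\mathfrak{t})$ precisely when the representatives $K$ and $K'=K+2\onev$ are joined by an edge in the $2v_0^*|_\agraph$ direction, and by the argument in the proof of Proposition~\ref{prop:bigraded root algorithm} this happens exactly when there is a node of $\rootbi(\mgraph,\mathfrak{t}_a)$ with coordinate $(\eta_1,\eta_2)$ in the sense of Definition~\ref{def:coordinates}; when the slices merge, the weights of $C$ add because $W_{\sgraph}(C;q,t)=\sum_{L\in C\cap[\mathfrak{t}]}W_{\sgraph}(L;q,t)$ is a sum over all $0$-cells regardless of slice. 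Finally, since $W_{\mgraph}(C^U;z,q,t)$ in Definition~\ref{def:weighted bigraded root} is constant along a fixed $U$-coordinate, the weight attached to a node $\eta_1$ of the weighted bigraded root is unambiguous, so the algorithm is well-defined.

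The main obstacle, and the one place where something beyond pure linearity is needed, is verifying that the grading (height) assignments on the two sides of Lemma~\ref{lem:surgery lattice point} are genuinely compatible in the form step~(1) demands — i.e.\ that the $q$-exponent shift $-\shift(a)$ and the power $q^{(-3-p)/4}\,q^{-\frac1p(a-\frac{1+\eps}2 p)^2}$ produced by $\mathcal{L}_p^{(a,\eps)}$ combine to precisely match the grading convention of the graded root of $\sgraph$ from Definition~\ref{def:graded root Gamma}, in which the grading of a component $C$ is its superlevel value $h=\frac{L^2+(s+1)}{4}$. Concretely this is the identity \eqref{eq:surgery - q exp of L and its restriction} rewritten in terms of $h[a]=h+\shift(a)$; I would check that $-\frac{3(s+1)+\sum m_v + (L-\eps\smatrix u)^2}{4}$, which governs the $q$-power of $W_{\sgraph}(L;q,t)$, equals $-h$ shifted appropriately, so that the $q$-power records the correct grading after all shifts are accounted for. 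This is a routine but somewhat delicate bookkeeping computation; once it is in place, the rest of the proof is an immediate assembly, and I would simply cite Lemma~\ref{lem:surgery lattice point}, Proposition~\ref{prop:bigraded root algorithm}, and Theorem~\ref{thm:invariance} and note that summing the per-slice identities over all components and all Alexander gradings yields the stated algorithm.
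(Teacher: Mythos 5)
Your proposal is correct and follows essentially the same route as the paper: the paper's proof of Theorem \ref{thm:p surgery} is precisely the assembly of Lemma \ref{lem:surgery lattice point} (applied termwise over the $0$-cells of each Alexander slice, using linearity of $\mathcal{L}_p^{(a,\eps)}$), Proposition \ref{prop:bigraded root algorithm} for the component-matching in step (2), and the weight bookkeeping from Theorem \ref{thm:invariance}. The ``delicate bookkeeping'' you flag is exactly equation \eqref{eq:surgery - q exp of L and its restriction}, which is already established inside the proof of Lemma \ref{lem:surgery lattice point}, so no additional verification is needed.
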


Let us explain how Theorem \ref{thm:p surgery} yields Gukov-Manolescu's Dehn surgery formula \cite[Theorem 1.2]{GM} (see in particular \cite[Section 6.8]{GM}) in a special limit. 
First, from \eqref{eq:Sigma} we see that 
\begin{equation}
    \label{eq:p}
    p = \dfrac{1}{e_0 \smatrix^{-1} e_0}.
\end{equation}
For the following discussion, as in \cite[Section 6.8]{GM}, assume that the ambient manifold $Y$ is an integer homology sphere. 
In this case, $\hdual{\onev} \amatrix^{-1} \onev = m_0 - p$ is an integer, and performing $p$ surgery on $\knot \subset \amfld$ yields precisely $\smfld$.

\begin{lem}
\label{lem:ZHS bijection}
 If $\amfld$ is an integer homology sphere, then the map
\begin{align*}
    \spinc(\smfld) \to \Z/p\Z
\end{align*}
given by $[L]\mapsto a(L) \bmod{p}$ is a well-defined bijection. 
\end{lem}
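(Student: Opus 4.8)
The plan is to verify that the assignment $[L] \mapsto a(L) \bmod p$ is well-defined, surjective, and injective, using the hypothesis that $Y$ is an integer homology sphere to pin down the cardinality of $\spinc(\smfld)$.

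First I would establish \emph{well-definedness}. If $L, L' \in \Char(\sgraph)$ represent the same $\spinc$ structure $\mathfrak{t}$, then $L - L' = 2\smatrix x$ for some $x \in \Z^{s+1}$, so $a(L) - a(L') = \frac{1}{2}(L-L')(\Sigma) = \hdual{(\smatrix x)}\Sigma = \hdual{x}(\smatrix \Sigma)$. Since $\smatrix\Sigma = \Sigma^2 e_0 = p\, e_0$ by the defining property of $\Sigma$ (namely $\hdual{e_i}\smatrix\Sigma = 0$ for $1 \le i \le s$ and $\hdual{e_0}\smatrix\Sigma = \Sigma^2 = p$), we get $a(L) - a(L') = p\, x_0 \in p\Z$. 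Hence the map to $\Q/p\Z$ is well-defined, and since $\amfld$ is an integer homology sphere, $\hdual{\onev}\amatrix^{-1}\onev = m_0 - p \in \Z$, so by \eqref{eq:Sigma^2} we have $\Sigma^2 = p$, and moreover $a(L) = \frac{1}{2}(L(\Sigma) + \Sigma^2)$ with $L(\Sigma) = -m_0 + \hdual{K}(-\amatrix^{-1}\onev) \in \Z$ (using $L_0 \equiv m_0$, integrality of $m_0$, and $\amatrix^{-1}\onev \in \Z^s$ since $\amfld$ is a $\Z$HS); thus $a(L) \in \frac{1}{2}\Z$ lands in a well-defined coset, and in fact one checks $a(L)$ is always an integer or consistently half-integer mod $p$, so the target may be taken to be $\Z/p\Z$ after an overall shift. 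I would phrase this cleanly by noting $\coset(\mathfrak{t}) = a(L) + p\Z$ from \eqref{eq:edges in a slice}, so the map is literally $\mathfrak{t} \mapsto \coset(\mathfrak{t}) \bmod p$.

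Next, \emph{surjectivity}: given any target residue, I need an $L \in \Char(\sgraph)$ with $a(L)$ in that class. By \eqref{eq:edges in a slice}, moving $L \mapsto L + 2\smatrix e_0$ changes $a(L)$ by $\Sigma^2 = p$, and moving in the $e_i$ directions ($1 \le i \le s$) leaves $a$ unchanged; so within a fixed $\mathfrak{t}$ the Alexander gradings realize exactly one coset mod $p$. To get all residues, I vary $\mathfrak{t}$ over $\spinc(\smfld)$. It therefore suffices to show that as $[L]$ ranges over all of $\Char(\sgraph)$, the values $a(L)$ hit every residue class mod $1$ inside $p\Z$-cosets — equivalently, that $a : \Char(\sgraph) \to \frac{1}{2}\Z$ is surjective onto the appropriate coset of $\Z$ (or $\frac12+\Z$), which follows because $a(L + 2e_0^{\dagger}\text{-type adjustments})$... more directly: $a(L) = \frac12(L_0 + m_0) + A(K)$ where $A(K)$ is the Alexander grading of $K = L|_\agraph$, and as $L_0$ ranges over $m_0 + 2\Z$ and $K$ over $\Char(\agraph)$, $a(L)$ ranges over all integers (using $\amfld$ is a $\Z$HS so there is a single $\spinc$ class on $\agraph$ and $A(K)$ takes all integer values along the lattice). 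Hence every residue mod $p$ is attained, giving surjectivity.

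Finally, \emph{injectivity} — this is the step I expect to be the main obstacle, since it requires knowing $|\spinc(\smfld)| = |H_1(\smfld;\Z)| = |\det \smatrix| = p$. I would argue: since $\amfld$ is an integer homology sphere and $\smfld$ is obtained by $p$-surgery on $\knot \subset \amfld$, we have $|H_1(\smfld;\Z)| = |p| = p$, so $|\spinc(\smfld)| = p$ (the $H_1$-action on $\spinc$ is free and transitive). Alternatively, directly: $\det \smatrix = \det \amatrix \cdot (m_0 - \hdual{\onev}\amatrix^{-1}\onev) = \det\amatrix \cdot p$, and $|\det\amatrix| = 1$ since $\amfld$ is a $\Z$HS, so $|\spinc(\sgraph)| = |\det \smatrix| = p$. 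Having already shown the map $\spinc(\smfld) \to \Z/p\Z$ is surjective between sets of equal finite cardinality $p$, it is automatically a bijection, which in particular gives injectivity. I would write the proof in exactly this order: well-definedness, then the cardinality computation $|\spinc(\smfld)| = p$, then surjectivity, concluding bijectivity by counting. The only genuinely delicate point is making sure the half-integer ambiguity in $a(L)$ is handled consistently so that the target really is $\Z/p\Z$ and not $\frac12\Z/p\Z$; this is resolved by observing $a(L) \equiv a(L') \bmod 1$ for all characteristic $L, L'$ of $\sgraph$, which follows from $L - L' \in 2\Z^{s+1}$ and $\Sigma \in \Z^{s+1}$ (valid precisely because $\amfld$ is a $\Z$HS, so $\amatrix^{-1}\onev$ is integral).
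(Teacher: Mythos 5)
Your proposal is correct and follows essentially the same route as the paper: well-definedness via $\hdual{x}\smatrix\Sigma = p\,x_0$, surjectivity by shifting $L$ in the $e_0$ direction, and bijectivity by comparing with $\lvert\spinc(\smfld)\rvert = \lvert\det\smatrix\rvert = \lvert p\rvert$. The one place you hedge --- whether $a(L)$ is an integer or only a consistent half-integer ``after an overall shift'' --- is resolved cleanly in the paper by one observation you nearly state but don't use: since $\amfld$ is a $\Z$HS, $\Sigma$ is an \emph{integral} homology class, so the characteristic condition $L(\Sigma) + \Sigma\cdot\Sigma \equiv 0 \bmod 2$ applied to $x=\Sigma$ gives $a(L) = \tfrac{1}{2}(L(\Sigma)+\Sigma^2) \in \Z$ outright, so no shift is needed and the target is literally $\Z/p\Z$.
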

\begin{proof}

Note that $\Sigma$ is an integer vector since the ambient manifold is an integer homology sphere. Therefore, since $L$ is characteristic, $L(\Sigma)+\Sigma^2$ is even. 
Next, suppose $[L] = [L']$, so that $L' = L +2M_{v_0, m_0}x$ for some $x\in\Z^{s+1}$. We have
 \[
\frac{L'(\Sigma)+p}{2} 
= \frac{L(\Sigma)+p}{2} + \hdual{x} M_{v_0, m_0}\Sigma
= \frac{L(\Sigma)+p}{2} +px_0,
\]
so the map is well-defined. For any $L\in \Char(\sgraph)$, $[L +2ne_0]$ maps to 
\begin{align*}
     \left(a(L)+n\right) \bmod{p}.
\end{align*}
So by varying $n$, we see that this map is surjective. By cardinality considerations this map must also be a bijection.   
\end{proof}

\begin{rem}
    Lemma \ref{lem:ZHS bijection} is analogous to \cite[Lemma 2.2]{ozsvath2008knot}
\end{rem}

\begin{rem}
    We point out that the map in Lemma \ref{lem:ZHS bijection} is a well-defined bijection in the more general setting where $\knot$ is nullhomologous in $Y$, since in this case the Seifert framing $\sf = \hdual{\onev} \amatrix^{-1} \onev$ is an integer as well. 
\end{rem}

Lemma \ref{lem:ZHS bijection} tells us that
$\coset(\mathfrak{t})$ is a coset of $p\mathbb{Z}$ inside $\mathbb{Z}$ determined by the $\spinc$ structure $\mathfrak{t} \in \spinc(Y_{v_0, m_0}) \cong \mathbb{Z}/p\mathbb{Z}$. 
It follows that, in the $h_U \rightarrow -\infty$ limit, 
Theorem \ref{thm:p surgery} says that
the BPS $q$-series $\widehat{Z}_{\mathfrak{t}}(q,t)$ of the surgered 3-manifold $Y_{v_0, m_0}$ can be obtained as
\begin{align*}
\widehat{Z}_{\mathfrak{t}}(q,t) 
&= \sum_{a\in \coset(\mathfrak{t})} \mathcal{L}_p^{(a,\eps)}[\widehat{Z}(z,q,t)]\\
&= \oint \frac{dz}{2\pi i z} (t^{-\frac{1}{2}}z - t^{\frac{1}{2}}z^{-1})
\widehat{Z}(z,q,t)\;
\sum_{a\in \coset(\mathfrak{t})}z^{2(a-\frac{1+\eps}{2}p)}q^{\frac{-3-p}{4}} q^{-\frac{1}{p}(a - \frac{1+\eps}{2}p)^2}\\
&= \oint \frac{dz}{2\pi i z} (t^{-\frac{1}{2}}z - t^{\frac{1}{2}}z^{-1})
\widehat{Z}(z,q,t)\;
\sum_{a\in \coset(\mathfrak{t})}z^{2 a}q^{\frac{-3-p}{4}} q^{-\frac{a^2}{p}},
\end{align*}
where $\widehat{Z}(z,q,t)$ is the BPS $q$-series of the plumbed knot complement (with respect to the unique $[b\vert_\Gamma]$). 
The last expression is exactly the integer surgery formula of Gukov-Manolescu.



\begin{exmp}[$-1$-surgery on trefoil]
Figure \ref{fig:trefoil_-1_surgery_with_weights} illustrates how the weighted graded root of $S^3_{-1}(\mathbf{3}_1) = \Sigma(2,3,7)$ can be recovered from the weighted (bi-)graded root of $\mathbf{3}_1$ (Figure \ref{fig:trefoil weighted bigraded root}) using the surgery formula. 
We have set $t=1$ for simplicity. 
\end{exmp}

\begin{figure}
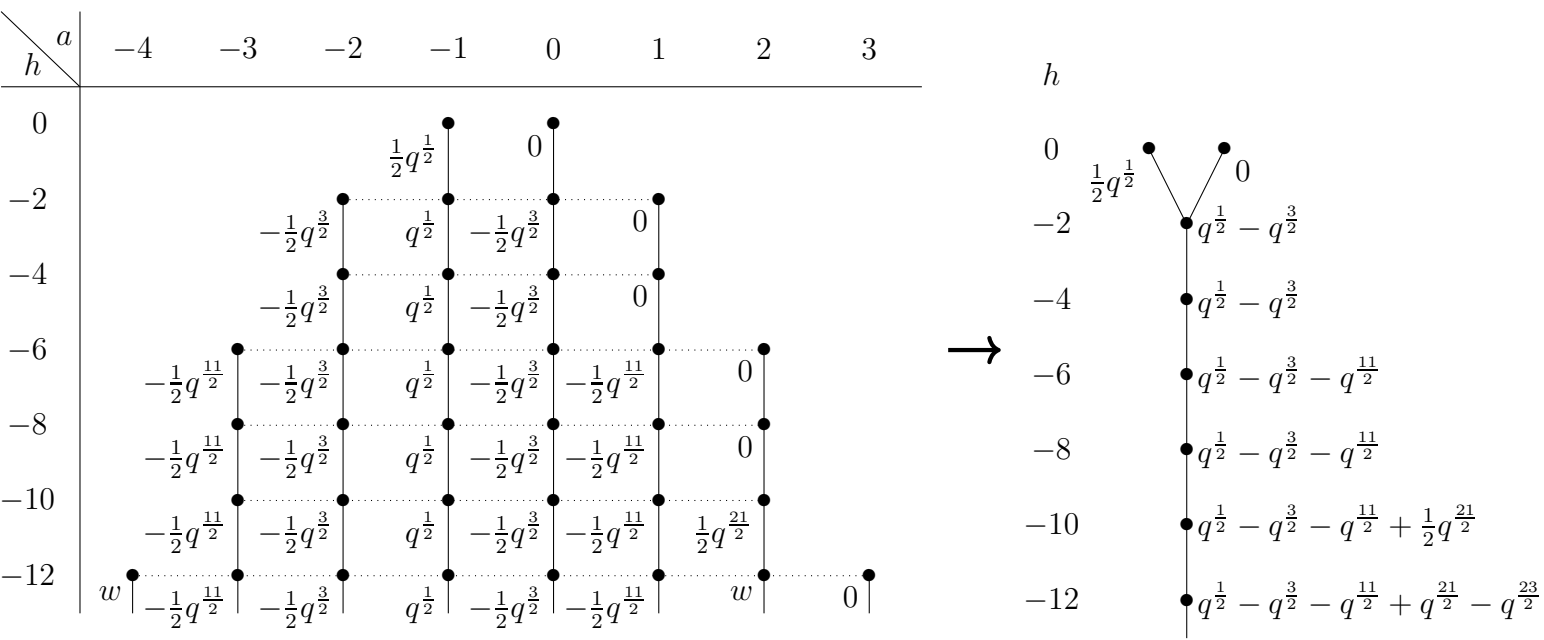

    \centering
    \includestandalone[]{figures/trefoil_-1_surgery_with_weights_rotated}
\caption{$-1$-surgery on trefoil. The two weights labeled $w$ are both given by $w = \frac{1}{2}q^{\frac{21}{2}}-\frac{1}{2}q^{\frac{23}{2}}$.}
\label{fig:trefoil_-1_surgery_with_weights}
\end{figure}


\appendix

\section{Remarks on invariants of plumbed manifolds}
\label{sec:remarks on invariants of plumbed mflds}

In this appendix we discuss more precisely the sense in which constructions both in this paper and elsewhere in the literature constitute invariants of plumbed manifolds (or plumbed knot complements).

One way to formulate the notion of an invariant of a closed, oriented $3$-manifold $Y$ 
equipped with $\mathfrak{s} \in \spinc(Y)$ is to assign an object in some category to $(Y, \mathfrak{s})$, such that if $g: Y \to Y'$ is a diffeomorphism with $g_*(\mathfrak{s}) = \mathfrak{s}' \in \spinc(Y')$, then $(Y, \mathfrak{s})$ and $(Y', \mathfrak{s}')$ are assigned isomorphic objects. 
For instance, Heegaard Floer homology is known to be an invariant\footnote{We note that the properties established in \cite{HF_naturality} are much stronger than the type of invariance considered in this appendix.} of the pair $(Y, \mathfrak{s})$ in the above sense \cite{HF_naturality}. 

Restricting to the present setting of negative definite plumbed manifolds, suppose one has an object $I(\Gamma, [k])$ for each negative definite plumbing tree $\Gamma$ and $[k]\in \spinc(\Gamma)$. 
One way to ensure that $I$ constitutes  an invariant in the above sense is to verify the following: 
given negative definite plumbing trees $\Gamma$ and $\Gamma'$, $\spinc$ structures $[k]\in \spinc(\Gamma)$ and $[k]' \in \spinc(\Gamma')$, and a diffeomorphism $g: Y(\Gamma) \to Y(\Gamma')$ with $g_*([k]) = [k]'$, there is a sequence of moves through negative definite plumbings $(\Gamma, [k]) = (\Gamma_1, [k]_1) \to  \cdots \to (\Gamma_n, [k]_n) = (\Gamma', [k]')$ such that $I(\Gamma_i, [k]_i)$ are all isomorphic. 
Theorem \ref{thm:Neumann closed} describes how to relate $\Gamma$ and $\Gamma'$, but, to our knowledge, Neumann moves alone do not address the additional $\spinc$ structure labels. 
Compare with Kirby's calculus of framed links \cite{Kirby}, as formulated in \cite[Theorem 5.3.6]{GompfStipsicz}, 
which says that for any framed links $L$ and $L'$ in $S^3$ and any orientation-preserving diffeomorphism $g$ between the manifolds obtained by surgery on $L$ and $L'$, there is a sequence of Kirby moves (adding or removing a $\pm 1$ framed unknot and handle slides) which realizes $g$ up to isotopy.

A similar discussion applies to the case of marked plumbed knot complements equipped with a relative $\spinc$ structure. 
In light of this, invariance of the main construction in the present paper, Theorem \ref{thm:invariance}, is with respect to Neumann moves and their induced map on $\spinc$ structures. 
Theorem \ref{thm:bigraded root invariance} and Theorem \ref{thm:weighted graded root invariance} are stated analogously.

Automorphisms of plumbing trees also act on the set of $\spinc$ structures. 
Precisely, let $\agraph$ be a negative definite plumbing tree and let $\varphi$ be a graph automorphism of $\agraph$ which preserves the weight at each vertex. 
Pick an ordering $v_1, \ldots, v_s$ of $\V(\agraph)$ for convenience, and view $\varphi$ as a permutation of $\{1, \ldots, s\}$. 
This gives a map $\Z^s\to \Z^s$, still denoted $\varphi$, given by $\varphi(x_1, \ldots, x_s) = (x_{\varphi^{-1}(1)}, \ldots, x_{\varphi^{-1}(s)})$, which in turn induces a map $\varphi_* : \spinc(\agraph) \to \spinc(\agraph)$.

\begin{prop}
    For any $\eps\in \{\pm 1\}$, admissible family of functions $W$, and $\mathfrak{s}\in \spinc(\agraph)$, the weighted graded roots $\root_{\eps}(\agraph, \mathfrak{s}, W)$ and $\root_{\eps}(\agraph, \varphi_*(\mathfrak{s}), W)$ are isomorphic. 
\end{prop}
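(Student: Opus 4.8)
The statement asserts that a weight-preserving graph automorphism $\varphi$ of a negative definite plumbing tree $\agraph$ induces an isomorphism of weighted graded roots $\root_{\eps}(\agraph, \mathfrak{s}, W) \cong \root_{\eps}(\agraph, \varphi_*(\mathfrak{s}), W)$. The approach is to show that the linear map $\varphi : \Z^s \to \Z^s$ permuting coordinates induces, at the level of characteristic vectors, an isomorphism of superlevel sets that respects both the grading $h_U$ and all the weight data, and hence descends to an isomorphism of weighted graded roots.

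\textbf{Step 1: set up the map on characteristic vectors.} Fix a representative $k \in \Char(\agraph)$ of $\mathfrak{s}$. Since $\varphi$ permutes the vertices and preserves weights, it preserves the adjacency matrix in the sense that $\varphi M \varphi^{-1} = M$ (equivalently $M_{\varphi(i)\varphi(j)} = M_{ij}$), and it permutes the degree vector: $\delta_{\varphi(i)} = \delta_i$, i.e. $\varphi(\delta) = \delta$, and likewise $\varphi(m) = m$ and $\varphi(\one) = \one$. Consequently $\varphi$ sends $\Char(\agraph) = m + 2\Z^s$ to itself, sends the sublattice $[k]$ to $[\varphi(k)]$, and $[\varphi(k)]$ is by definition a representative of $\varphi_*(\mathfrak{s})$. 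Because $\varphi$ is an orthogonal-type automorphism of the pairing (it preserves $M$ hence $M^{-1}$), we get $\varphi(K)^2 = K^2$ for all $K$, so $h_U(\varphi(K)) = h_U(K)$. Moreover $\varphi$ sends an edge $K, K + 2M e_i$ of a superlevel set to $\varphi(K), \varphi(K) + 2M e_{\varphi(i)}$, which is again an edge; hence $\varphi$ restricts to an isomorphism of $1$-dimensional CW complexes $\superlevelone_h(\agraph, [k]) \xrightarrow{\sim} \superlevelone_h(\agraph, [\varphi(k)])$ for every $h \in h_U(k) + 2\Z$. This yields an isomorphism of graded roots $\varphi_* : \root(\agraph, [k]) \xrightarrow{\sim} \root(\agraph, [\varphi(k)])$.

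\textbf{Step 2: check that weights are preserved.} Let $C \subset \superlevelone_h(\agraph, [k])$ be a connected component and $C' = \varphi(C)$ the corresponding component of $\superlevelone_h(\agraph, [\varphi(k)])$. We must show $W_{\agraph, [k]}(C; q, t) = W_{\agraph, [\varphi(k)]}(C'; q, t)$, where these weights are as in \eqref{eq:weights in weighted graded root}. For each $K \in C \cap [k]$ we compare the contribution of $K$ with that of $\varphi(K) \in C' \cap [\varphi(k)]$. Using $\varphi(\one) = \one$ and $\varphi M \varphi^{-1} = M$ we have $\varphi(K) - \eps M \one = \varphi(K - \eps M \one)$, and since $W_{\agraph}(K) = \prod_{i=1}^s W_{\delta_i}((K - \eps M\one)_i)$ with $\delta_{\varphi(i)} = \delta_i$, the product is simply reindexed: $W_{\agraph}(\varphi(K)) = \prod_i W_{\delta_{\varphi(i)}}((\varphi(K) - \eps M\one)_{\varphi(i)}) = \prod_i W_{\delta_i}((K - \eps M\one)_i) = W_{\agraph}(K)$. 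For the $q$ and $t$ exponents, $(\varphi(K) - \eps M\one)^2 = (K - \eps M\one)^2$ as noted above, $\hdual{\varphi(K)}\one = \hdual{K}\varphi^{-1}(\one) = \hdual{K}\one$, $\sum_v m'_v = \sum_v m_v$, and $\hdual{\one} M \one$ is unchanged. Hence the contribution of $\varphi(K)$ equals that of $K$ termwise, and summing over $K \in C \cap [k]$ gives equality of the weights. Therefore $\varphi_*$ carries the weighted graded root $\root_{\eps}(\agraph, \mathfrak{s}, W)$ isomorphically onto $\root_{\eps}(\agraph, \varphi_*(\mathfrak{s}), W)$.

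\textbf{Main obstacle.} There is no serious obstacle here: all the needed facts are elementary consequences of $\varphi$ preserving the combinatorial structure $(M, \delta, m, \one)$. The only point requiring a little care is bookkeeping — verifying that the reindexing by the permutation $\varphi$ matches up the factors in $W_{\agraph}$ and all the quadratic/linear forms correctly, which amounts to the observations $\varphi M \varphi^{-1} = M$, $\varphi(\delta) = \delta$, $\varphi(\one) = \one$, and the orthogonality $\hdual{\varphi(x)}\varphi(y) = \hdual{x}y$. One should also note independence of the choice of representative $k$: replacing $k$ by $k + 2Mx$ replaces $\varphi(k)$ by $\varphi(k) + 2M\varphi(x)$, which is another representative of $\varphi_*(\mathfrak{s})$, so the construction is well-defined on $\spinc$ structures. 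This completes the proof sketch.
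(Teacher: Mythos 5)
Your proposal is correct and follows essentially the same approach as the paper: the paper's proof likewise observes that $\varphi$ preserves $K^2$, hence induces an isomorphism of the superlevel-set CW complexes, and that the contribution of $K$ to the weight of its component equals that of $\varphi(K)$. Your write-up simply makes explicit the bookkeeping ($\varphi M\varphi^{-1}=M$, $\varphi(\delta)=\delta$, $\varphi(\one)=\one$, reindexing the product defining $W_{\agraph}$) that the paper dismisses as straightforward.
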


\begin{proof}
   It is straightforward to see that  $K^2 = (\varphi(K))^2$ for any $K\in \Char(\agraph)$, so that $\varphi$ restricts to a map $\superlevel_h(\agraph, \mathfrak{s}) \to \superlevel_h(\agraph, \varphi_*(\mathfrak{s}))$ for each $h \in h_U(k) + 2\Z^s$. 
   This evidently yields an isomorphism of $1$-dimensional CW complexes, and the contribution of $K \in \superlevel_h(\agraph, \mathfrak{s})$ to the weight of the connected component it lies in is equal to the contribution of $\varphi(K) \in \superlevel_h(\agraph, \varphi_*(\mathfrak{s}))$. 
\end{proof}

It is natural to ask if Neumann moves and graph isomorphisms suffice to generate all maps of $\spinc$ structures induced by orientation-preserving diffeomorphism between negative definite plumbed manifolds. 
Example \ref{ex:conjugation example} below demonstrates that this is not the case. 
We first record some preliminary observations.

Let $Y$ be a closed oriented $3$-manifold. 
In Turaev's convention \cite[Chapter I, Section 4.3]{Turaev}, the first Chern class is viewed as a map $c :\spinc(Y) \to H_1(Y;\Z)$. 
It satisfies $c(\b{\mathfrak{s}}) = - c(\mathfrak{s})$, where $\b{\mathfrak{s}}$ is the conjugate of $\mathfrak{s}$, and $c$ is injective if $H_1(Y;\Z)$ has no $2$-torsion.

\begin{lem}
\label{lem:strongly invertible}
    For a plumbing graph $\Gamma$, its associated framed link $\mathcal{L}(\Gamma) \subset S^3$ is strongly invertible. 
\end{lem}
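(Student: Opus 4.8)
The plan is to exhibit an explicit involution of $S^3$ that reverses each link component of $\mathcal{L}(\Gamma)$ while fixing the framings, since this is precisely what ``strongly invertible'' means for a framed link: an orientation-preserving involution $\tau$ of $S^3$ such that $\tau(\mathcal{L}) = \mathcal{L}$ as a framed link and $\tau$ reverses the orientation of each component, with $\mathrm{Fix}(\tau)$ an unknot meeting each component in two points.

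First I would recall how $\mathcal{L}(\Gamma)$ is built: one chooses a planar embedding of the forest $\Gamma$ and places a small standard unknot $L_v$ for each vertex $v$, arranged in a row (or more generally along the tree), with $L_v$ and $L_w$ Hopf-linked exactly when $v$ and $w$ are adjacent, and $L_v$ given framing $m(v)$. The key point is that this entire configuration can be drawn symmetrically with respect to reflection. Concretely, view $S^3 = \R^3 \cup \{\infty\}$ and let $\tau$ be rotation by $\pi$ about a horizontal axis $A$ lying in the plane of the diagram. One arranges each unknot $L_v$ as a round circle bisected by $A$, so that $\tau$ maps $L_v$ to itself while reversing its orientation; the Hopf clasp between adjacent components, being a local symmetric tangle (two arcs crossing), is also preserved by $\tau$. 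Since $\tau$ is an isometry of the standard $S^3$, it is orientation-preserving, it is an involution, and its fixed-point set is the axis $A \cup \{\infty\}$, an unknot intersecting each $L_v$ in exactly two points. Finally, because $\tau$ preserves the blackboard framing of the diagram and fixes each $m(v)$, it respects the framed link structure.

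The main obstacle — really the only subtlety — is checking that the Hopf-linking between adjacent vertices can be realized compatibly with a single global axis of symmetry, especially when a vertex has high degree (is adjacent to several others) or when the tree is not linear. The resolution is that for a forest one may always choose the planar embedding and the positions of the circles so that all of the pairwise clasps occur ``transversally'' to a common horizontal axis: since $\Gamma$ is a tree, one can lay it out so that each $L_v$ is a round circle centered on $A$, and adjacent circles are linked through small symmetric bands; the rotation $\tau$ about $A$ simultaneously preserves every clasp and reverses every component. Once this layout is fixed, invariance of $\mathcal{L}(\Gamma)$ under $\tau$ and reversal of orientations on all components is immediate from the construction, which establishes the lemma. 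I would close by remarking that, combined with the fact that strongly invertible links give surgered $3$-manifolds admitting orientation-preserving involutions, this produces the diffeomorphisms of $Y(\Gamma)$ used in the surrounding discussion (e.g.\ in Example \ref{ex:conjugation example}).
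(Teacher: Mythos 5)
Your proof is correct and takes essentially the same approach as the paper: the paper also arranges a diagram of $\mathcal{L}(\Gamma)$ (by induction on the number of vertices) so that every component is an unknot meeting a fixed line in two points, and then rotates by $\pi$ about that line. Your write-up supplies somewhat more detail on the symmetric placement of the Hopf clasps, but the underlying argument is identical.
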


\begin{proof}
    By induction on the number of vertices, a diagram of $\mathcal{L}(\Gamma)$ can be arranged so that each component is an unknot which intersects a fixed line in two points. 
    A rotation of $\pi$ about this line  demonstrates that $\mathcal{L}(\Gamma)$ is strongly invertible. 
\end{proof}

Now fix a plumbing graph $\Gamma$, and let $f$ be an orientation-preserving diffeomorphism of $S^3$ sending $\mathcal{L}(\Gamma)$ to itself with orientation reversed (for instance, the one constructed in the proof of Lemma \ref{lem:strongly invertible}). 
Denote by $\til{f} : Y(\Gamma) \to Y(\Gamma)$ the induced diffeomorphism. 

\begin{prop}
\label{prop:diffeo acting by conjugation}
   With the above notation, if $H_1(Y(\Gamma);\Z)$ has no $2$-torsion, then the map $\spinc(Y(\Gamma)) \to \spinc(Y(\Gamma))$ induced by $\til{f}$ is given by $\mathfrak{s}\mapsto \b{\mathfrak{s}}$. 
\end{prop}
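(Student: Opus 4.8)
The plan is to show that the induced map $\tilde{f}_*$ on $\spinc$ structures agrees with conjugation by exploiting the injectivity of the first Chern class $c : \spinc(Y(\Gamma)) \to H_1(Y(\Gamma);\Z)$ under the $2$-torsion-free hypothesis. Since $c$ is injective, it suffices to prove that $c(\tilde{f}_*(\mathfrak{s})) = c(\bar{\mathfrak{s}}) = -c(\mathfrak{s})$ for every $\mathfrak{s} \in \spinc(Y(\Gamma))$. Because $\tilde{f}$ is a diffeomorphism, it is natural with respect to the Chern class, i.e. $c(\tilde{f}_*(\mathfrak{s})) = \tilde{f}_*(c(\mathfrak{s}))$, where on the right $\tilde{f}_*$ denotes the induced automorphism of $H_1(Y(\Gamma);\Z)$. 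So the whole proposition reduces to the purely homological claim that $\tilde{f}_*$ acts as $-1$ on $H_1(Y(\Gamma);\Z)$.

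To establish that, I would use the concrete description of $H_1(Y(\Gamma);\Z) \cong \Z^s / M\Z^s$ from Section~\ref{subsec:closed plumbed manifolds}, under which an oriented meridian $\mu_i$ of the link component $L_{v_i}$ maps to the class $[e_i]$. The diffeomorphism $f$ of $S^3$ sends $\mathcal{L}(\Gamma)$ to itself with every component's orientation reversed; hence it sends the meridian $\mu_i$ to a curve isotopic to $-\mu_i$ in the complement (reversing the orientation of a knot reverses the orientation of its meridian, and $f$ restricted to the complement of $\mathcal{L}(\Gamma)$ is a diffeomorphism carrying the $i$-th component's meridian to the $i$-th component's meridian up to sign). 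Passing to the surgered manifold $Y(\Gamma)$, which is built by gluing solid tori along the $\mathcal{L}(\Gamma)$-framing, the induced map $\tilde{f}_*$ on $H_1$ therefore sends each generator $[e_i]$ to $-[e_i]$, i.e. $\tilde{f}_* = -\operatorname{id}$. This is exactly $-c(\mathfrak{s})$ when applied to $c(\mathfrak{s})$, so combined with $c(\bar{\mathfrak{s}}) = -c(\mathfrak{s})$ and injectivity of $c$ we conclude $\tilde{f}_*(\mathfrak{s}) = \bar{\mathfrak{s}}$.

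The main obstacle I anticipate is carefully justifying the claim that $f$ reversing the orientation of each link component forces the induced map on $H_1$ of the complement (and hence of the surgered manifold) to be $-1$ on the meridian classes. One has to be a little careful: $f$ permutes nothing (it fixes each component setwise) but reverses orientations, and one must check that under the identification $H_1(S^3 \setminus \mathcal{L}(\Gamma);\Z) \cong \Z^s$ generated by the meridians, an orientation-reversing self-homeomorphism of a knot extends/behaves so that its meridian class is negated, and moreover that this is compatible with the Dehn filling that produces $Y(\Gamma)$. The cleanest way to handle this is to note that, for the strongly invertible link, the involution $f$ is realized by a $\pi$-rotation about an axis, and a $\pi$-rotation about an axis meeting each component in two points visibly reverses each meridian up to isotopy; compatibility with the surgery slopes follows because $f$ preserves the framing curves as well (again up to the same orientation reversal), so the gluing extends over the filling solid tori. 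Once this homological computation is in hand, the rest is formal, using the stated properties of $c$ (namely $c(\bar{\mathfrak{s}}) = -c(\mathfrak{s})$ and injectivity when there is no $2$-torsion) and naturality of the Chern class under diffeomorphisms.
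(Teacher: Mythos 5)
Your proof is correct and follows essentially the same route as the paper's: both reduce the claim, via naturality and injectivity of $c$ under the no-$2$-torsion hypothesis, to showing that $\til{f}_*$ acts as $-\operatorname{id}$ on $H_1(Y(\Gamma);\Z)$. The paper simply asserts this last fact, whereas you supply the justification (the meridian classes generate $H_1(Y(\Gamma);\Z)\cong \Z^s/M\Z^s$ and are each negated because $f$ is orientation-preserving on $S^3$ while reversing each component of $\mathcal{L}(\Gamma)$), which is a welcome addition rather than a divergence.
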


\begin{proof}
    Naturality of the Chern class gives a commutative diagram
    \[
\begin{tikzcd}
    \spinc(Y(\Gamma)) \ar[r, "c"] \ar[d] & H_1(Y(\Gamma);\Z) \ar[d, "\til{f}_*"] \\
    \spinc(Y(\Gamma)) \ar[r, "c"] & H_1(Y(\Gamma);\Z)
\end{tikzcd}
    \]
    with injective horizontal arrows. 
    The right vertical map is given by $\til{f}_*(x) = -x$ for all $x\in H_1(Y(\Gamma);\Z)$. 
    The statement follows. 
\end{proof}

\begin{exmp}
\label{ex:conjugation example}
Consider the plumbing tree $\Gamma$, shown below.
\begin{center}
\begin{tikzpicture}[scale=.7]
\node (m) at (0,0) {$\bullet$};
\node (r) at (2,0) {$\bullet$};
\node (l) at (-2,0) {$\bullet$};
\node (b) at (0,-2) {$\bullet$};
\node (t) at (0,2) {$\bullet$};

\draw (-2,0) --++ (4,0);
\draw (0,2) --++ (0,-4);

\node[above] at (-.5,0) {$-1$};
\node[above] at (r) {$-7$};
\node[above] at (l) {$-11$};
\node[left] at (b) {$-10$};
\node[left] at (t) {$-3$};

\end{tikzpicture}
\end{center}
In \cite[Example 8.4]{AJK}, a $\spinc$ structure $\mathfrak{s} \in \spinc(Y(\Gamma))$ was found such that the weighted graded roots of $(\Gamma, \mathfrak{s})$ and $(\Gamma, \b{\mathfrak{s}})$ are distinct. 
On the other hand, $H_1(Y(\Gamma);\Z) \cong \Z/ 769\Z$ has no $2$-torsion, so by Proposition \ref{prop:diffeo acting by conjugation} there is a diffeomorphism of $Y(\Gamma)$ acting as conjugation on $\spinc(Y(\Gamma))$. 

Since the weighted graded root is invariant under Neumann moves and graph isomorphisms, this shows that these moves alone are not enough to induce the action of any given diffeomorphism on the set of $\spinc$ structures. 
\end{exmp}

We discuss the behavior of the weighted graded root under $\spinc$ conjugation in more detail in Section \ref{sec:spinc conjugation revisited}. 

\begin{rem}
    Manifolds associated with negative definite plumbing trees are perhaps more naturally viewed from the perspective of singularity theory rather than the perspective of low-dimensional topology. 
    For a negative definite plumbing tree $\agraph$, there is a \emph{canonical} $\spinc$ structure $\mathfrak{s}_{can}$, which is represented by the characteristic vector $(-m_1 -2, \ldots, -m_s-2)$. 
    This $\spinc$ structure is the restriction to $Y(\Gamma)$ of the $\spinc$ structure determined on $X(\Gamma)$ by an almost-complex structure; 
    see for instance the discussion in \cite[Section 2]{NemethiNicolaescu}. 
    The diffeomorphisms associated with Neumann moves preserve the canonical $\spinc$ structure, while a general diffeomorphism need not. 
\end{rem}

\bibliographystyle{amsalpha}
\bibliography{main}
\end{document}